\documentclass[12pt,a4paper,reqno]{amsart}

\usepackage{amssymb}
\usepackage{amscd}
\usepackage[pdftex,pdfpagelabels]{hyperref}
\usepackage{enumerate}
\usepackage{graphicx}
\usepackage{siunitx}
\usepackage{tikz-cd}
\usepackage{stix}
\usepackage{bm}
\usepackage{cleveref}

\numberwithin{equation}{section}

\usepackage{mathtools}%                  http://www.ctan.org/pkg/mathtools
\usepackage[tableposition=top]{caption}% http://www.ctan.org/pkg/caption
\usepackage{booktabs,dcolumn}%           http://www.ctan.org/pkg/dcolumn + http://www.ctan.org/pkg/booktabs

\DeclareFontFamily{OT1}{rsfs}{}
\DeclareFontShape{OT1}{rsfs}{n}{it}{<-> rsfs10}{}
\DeclareMathAlphabet{\mathscr}{OT1}{rsfs}{n}{it}

\theoremstyle{plain}

\newtheorem{theorem}{Theorem}[section]
\newtheorem{proposition}[theorem]{Proposition}
\newtheorem{lemma}[theorem]{Lemma}

\theoremstyle{definition}

\newtheorem{definition}[theorem]{Definition}
\newtheorem{remark}[theorem]{Remark}

\newtheorem{example}[theorem]{Example}

\renewcommand\P{\mathbf{P}}
\newcommand\E{\mathbf{E}}

\newcommand\R{\mathbb{R}}
\newcommand\Z{\mathbb{Z}}
\newcommand\N{\mathbb{N}}

\newcommand\Trap{{\operatorname{Trap}}}
\newcommand\eps{\varepsilon}

\renewcommand{\mod}{\bmod}

\newcommand{\hide}[1]{}

\begin{document}

\title[Gilbreath's conjecture for Cram\'er model]{Gilbreath's conjecture: a Cram\'er random model\\ and a deterministic analysis}

\author{Zachary Chase}
\address{Department of Mathematics, Kent State University}
\email{zachman99323@gmail.com}

\author{Zach Hunter}
\address{ETH Zurich, Department of Mathematics, R\"amistrasse 101, 8092 Zurich, Switzerland}
\email{zach.hunter@math.ethz.ch}

\author{Terence Tao}
\address{UCLA Department of Mathematics, Los Angeles, CA 90095-1555.}
\email{tao@math.ucla.edu}

\subjclass[2020]{11A41, 11K99}

\begin{abstract}  Gilbreath's conjecture asserts that if one starts with the sequence of primes and takes successive absolute  differences to create a triangular array, then the left diagonal of this array consists entirely of ones after the first row.  In this paper, we show that the analogue of this conjecture for a Cram\'er random model holds, in which the (normalized) prime gaps are replaced by independent random variables with geometric distributions of logarithmic size.  We also give some preliminary analysis of the associated continuous probabilistic model for this problem, as well as a deterministic ``inverse theorem'' that isolates the specific obstructions to Gilbreath's conjecture (assuming a Cram\'er type bound on prime gaps), namely long blocks of zeroes, or very long shallow $\{0,d\}$-valued blocks for some $d \geq 2$.
\end{abstract}

\maketitle

%%%%%%%%%%%%%%%%%%%%%%%%%%%%%%%%%%%%%%%%%%%%%%%%%

\section{Introduction}

\subsection{Gilbreath arrays}

Given a length $n$ sequence $a_1, a_2, a_3, \dots, a_n$ of real numbers, define the \emph{absolute difference} of this sequence to be the length $n-1$ sequence
$$ |a_1-a_2|, |a_2-a_3|, \dots, |a_{n-1}-a_n|.$$
We place this absolute difference sequence beneath the original sequence in an interspersed fashion:
$$
\begin{array}{cccccccccc}
  a_1 & & a_2 & & a_3 & \dots & a_{n-1} & & a_n  \\
  & |a_1-a_2| & & |a_2-a_3| & &  \dots & & |a_{n-1}-a_n| &\\
\end{array}
$$
Iterating this procedure, we see that every length $n$ sequence of real numbers generates a triangular array 
$$
\begin{array}{ccccccccccc}
  a_1 & & a_2 & & a_3 & & a_4 & & \dots & & a_n \\
  & a_{(1,1)} & & a_{(1,2)} & & a_{(1,3)} & & \dots & & a_{(1,n-1)} & \\
  & & a_{(2,1)} & & a_{(2,2)} & & \dots & & a_{(2,n-2)} & \\
  & & & a_{(3,1)} & & \dots & & a_{(3,n-3)} & & \\
  & & & & \ddots & & & & & \\
  & & & & & a_{(n-1,1)} & & & &
\end{array}
$$
of real numbers of sidelength $n$, where $a_{(i,j)}$ for $0 \leq i \leq n-1$ and $1 \leq j \leq n-i$ is defined by the absolute difference recursion
\begin{equation}\label{ai-recur}
a_{(i+1,j)} = |a_{(i,j)}-a_{(i,j+1)}|
\end{equation}
with initial condition
\begin{equation}\label{ai-initial}
  a_{(0,j)} = a_j
\end{equation}
for $1 \leq j \leq n$. If the initial sequence consisted of non-negative integers, then clearly the resulting array will also.  We refer to this array as the \emph{Gilbreath array} generated by this sequence, which we call the \emph{initial data} for the array.  For instance, if we take the initial data to be the first six primes $2,3,5,7,11,13$, the resulting Gilbreath array is
$$
\begin{array}{ccccccccccc}
  2 & & 3 & & 5 & & 7 & & 11 & & 13 \\
  & 1 & & 2 & & 2 & & 4 & & 2 & \\
  & & 1 & & 0 & & 2 & & 2 & \\
  & & & 1 & & 2 & & 0 & & \\
  & & & & 1 & & 2 & & & \\
  & & & & & 1 & & & &
\end{array}.
$$
One can of course start with an infinite sequence of real numbers as the initial data, which would then create an infinite Gilbreath array in the obvious fashion.

It was conjectured by Gilbreath in 1958, and independently by Proth in 1878 \cite{proth}, that the left diagonal of the Gilbreath array generated by the primes $p_1, p_2, \dots$ consists entirely of ones after the first row.  This conjecture remains open, but has been verified for the first $3.34 \times 10^{11}$ rows \cite{odlyzko}.

By removing the left diagonal and top row, dividing by two, and then subtracting one from the new top row, the conjecture is equivalent to the assertion that the left diagonal of the Gilbreath array generated by the normalized gaps $\frac{p_3-p_2}{2}-1, \frac{p_{4}-p_{3}}{2}-1, \dots$ takes values in $\{0,1\}$.  For instance, the first nine normalized gaps are $0, 0, 1, 0, 1, 0, 1, 2, 0$  (\url{https://oeis.org/A100820}), and the corresponding Gilbreath array is
$$
\begin{array}{ccccccccccccccccc}
  0 & & 0 & & 1 & & 0 & & 1 & & 0 && 1 && 2 && 0\\
  & 0 & & 1 & & 1 & & 1 & & 1 && 1 && 1 && 2\\
  & & 1 & & 0 & & 0 & & 0 &&  0  && 0 && 1\\
  & & & 1 & & 0 & & 0 & &  0 && 0 && 1\\
  & & & & 1 & & 0 & & 0 && 0 && 1\\
  & & & & & 1 & & 0 & & 0 && 1\\
  & & & & & & 1 & & 0 && 1\\
  & & & & & & & 1 && 1 \\
  & & & & & & & & 0
\end{array}.
$$
It is clear that if one row of a Gilbreath array takes values between $0$ and $D$ for some $D$, then the same is true for all subsequent rows.  Thus, for non-negative Gilbreath arrays, the maximum size attained at a row does not increase as one goes deeper into the array.  In the case of the array generated by the first $N$ normalized prime gaps for some $N>1$, we have the Cram\'er--Granville conjecture \cite{cramer} asserting an upper bound\footnote{See Section \ref{notation} for our conventions on asymptotic notation.}
\begin{equation}\label{cramer} 
  p_{n+1} - p_n \ll \log^2(2+n)
\end{equation}
on prime gaps, so the top row of this array conjecturally has maximum size $O(\log^2 N)$.  Since there are $N$ rows, it seems quite plausible for $N$ large enough that the maximum size will drop enough times as one goes deeper into the array that one will end up with either $0$ or $1$ at the bottom vertex of the array, which would suggest that the Gilbreath conjecture is true with at most finitely many exceptions.  Unfortunately, such bounds are not strong enough by themselves to resolve this conjecture, even asymptotically; see \cite{eppstein}.  As we shall discuss in more detail later, there are two scenarios in particular that could in principle derail the above heuristic analysis.  The first is that a long block of zeroes might appear at some point in the array, preventing the entries next to that block from decreasing for some time, as the following example demonstrates:
$$
\begin{array}{ccccccccccccccccc}
  2 & & 0 & & 0 & & 0 & & 0 & & 0 && 0 && 0 && 3\\
  & 2 & & 0 & & 0 & & 0 & & 0 && 0 && 0 && 3\\
  & & 2 & & 0 & & 0 & & 0 &&  0  && 0 && 3\\
  & & & 2 & & 0 & & 0 & &  0 && 0 && 3\\
  & & & & 2 & & 0 & & 0 && 0 && 3\\
  & & & & & 2 & & 0 & & 0 && 3\\
  & & & & & & 2 & & 0 && 3\\
  & & & & & & & 2 && 3 \\
  & & & & & & & & 1
\end{array}.
$$
Another bad scenario occurs when a very long block appears early in the array which only attains two values $0,d$ for some $d \geq 2$, in which case the values propagate within the set $\{0,d\}$ for a significant period of time without any decrease in magnitude, as illustrated for instance by the following example:
$$
\begin{array}{ccccccccccccccccc}
  3 & & 0 & & 3 & & 3 & & 0 & & 3 && 0 && 0 && 0\\
  & 3 & & 3 & & 0 & & 3 & & 3 && 3 && 0 && 0\\
  & & 0 & & 3 & & 3 & & 0 &&  0  && 3 && 0\\
  & & & 3 & & 0 & & 3 & &  0 && 3 && 3\\
  & & & & 3 & & 3 & & 3 && 3 && 0\\
  & & & & & 0 & & 0 & & 0 && 3\\
  & & & & & & 0 & & 0 && 3\\
  & & & & & & & 0 && 3 \\
  & & & & & & & & 3
\end{array}.
$$
A noteworthy special case of the above scenario occurs when one entry is non-zero, in which case the array essentially has the pattern of a Sierpinski triangle (or of Pascal's triangle modulo $2$):
$$
\begin{array}{ccccccccccccccccc}
  0 & & 0 & & 0 & & 0 & & 3 & & 0 && 0 && 0 && 0\\
  & 0 & & 0 & & 0 & & 3 & & 3 && 0 && 0 && 0\\
  & & 0 & & 0 & & 3 & & 0 &&  3  && 0 && 0\\
  & & & 0 & & 3 & & 3 & &  3 && 3 && 0\\
  & & & & 3 & & 0 & & 0 && 0 && 3\\
  & & & & & 3 & & 0 & & 0 && 3\\
  & & & & & & 3 & & 0 && 3\\
  & & & & & & & 3 && 3 \\
  & & & & & & & & 0
\end{array}.
$$

\subsection{Gilbreath's conjecture for Cram\'er type models}
As we do not (personally) know how to rule out the above scenarios, we cannot (personally) resolve Gilbreath's conjecture. However, we can instead study probabilistic analogues of the conjecture, in which the normalized prime gaps are replaced by random variables.  In a recent paper \cite{chase}, the first author established the following result:

\begin{theorem}[Gilbreath's conjecture for a random model]\label{thm-chase} Let $f: \N \to \N$ be a function with $2 \leq f(n) \leq \frac{1}{10}\frac{\log\log n}{\log\log\log n}$ for all sufficiently large $n$.  Let $a_1, a_2, \dots$ be a sequence of independent random variables, each uniformly distributed in $\{0,\dots,f(n)-1\}$ for each $n \geq 1$.  Then almost surely the left diagonal of the Gilbreath array generated by $a_1, a_2, \dots$ takes values in $\{0,1\}$ after finitely many rows.
\end{theorem}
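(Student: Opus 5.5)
The plan is to show that, almost surely, every entry of some row $m$ lies in $\{0,1\}$ for all $j \geq 1$. It then follows that every later row also takes values in $\{0,1\}$, as noted in the introduction, and in particular so does the left diagonal. By Borel--Cantelli, it suffices to find scales $n_1 < n_2 < \dots$ and show that the probability that row $n_k$ fails to be $\{0,1\}$-valued on the stretch of entries that feed the relevant part of the left diagonal is summable in $k$. The key structural fact is the \emph{locality} of the Gilbreath array. The entry $a_{(i,j)}$ depends only on $a_j,\dots,a_{j+i}$. Moreover, if a row takes values in $\{0,\dots,D\}$ on an interval, then so does the part of the next row lying below that interval.

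The engine is a \emph{decrement lemma}. Suppose a block of length $L$ in some row takes values in $\{0,\dots,D\}$ with $D \geq 2$ and is "generic", meaning it contains no run of zeros of length $\gg \ell$. Then after $T(D,\ell)$ further rows, the maximum on the corresponding shrunken block has dropped to at most $D-1$. The obstruction to a drop is exactly the scenario from the introduction: the block lives in $\{0,D\}$, in which case the array is $D$ times Pascal's triangle mod $2$. I would prove the lemma as follows. Consider the positions where the value $D$ occurs. An entry equal to $D$ in the next row requires a neighbour pair $(0,D)$ or $(D,0)$. If any entry lies strictly between $0$ and $D$, it forces nearby future entries to be $<D$. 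A bounded-range propagation argument then shows that such "intermediate" values spread at linear speed and are not destroyed. The remaining case is the purely $\{0,D\}$-valued configuration, which evolves as $D$ times the mod-$2$ XOR rule (Pascal mod $2$). For random data, I would show that within $O_D(\ell)$ rows some intermediate value appears. This is a small-ball computation: a window of length $\ell$ of i.i.d.\ uniform values in $\{0,\dots,f-1\}$ is entirely contained in $\{0,D\}$ with probability at most $(2/f)^\ell$.

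Next I iterate the lemma. Starting from values in $\{0,\dots,f(n)-1\}$, I apply it $f(n)$ times to reach $\{0,1\}$, with total depth roughly $\ell\, C^{f}$ or $\ell\, f!$. The hypothesis $f(n) \leq \frac{1}{10}\frac{\log\log n}{\log\log\log n}$ gives $f^{f} \leq (\log n)^{1/10}$. This is exactly what is needed for the required depth $(\log n)^{O(1)}$ to be much smaller than $n$. It also makes the failure probabilities, such as long zero runs with probability $f^{-\ell}$ and $\{0,D\}$-trapped windows with probability $(2/f)^{\ell}$, summable after a union bound over $O(n)$ windows, provided we take $\ell \approx (\log n)^{1/2}$. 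One subtlety is that rows after the first are no longer independent or uniform. I would handle this by conditioning only on events concerning the original top-row data, namely the "generic" conditions, and then making the decrement lemma fully deterministic given those conditions.

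I expect the main obstacle to be the deterministic decrement lemma, and specifically ruling out the situation where intermediate values keep being destroyed while the block re-enters a $\{0,D\}$ pattern. It is also hard to quantify the speed $T(D,\ell)$ well enough that the product over $D=f,f-1,\dots,2$ stays within the $\log\log n/\log\log\log n$ budget. My first attempt would be a potential-function argument, counting the number of "$D$-adjacent-to-$0$" pairs, combined with the mod-$2$ Lucas structure for the trapped case.
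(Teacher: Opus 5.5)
\textbf{The genuine gap is controlling the $\{0,D\}$ obstruction after the first decrement.} Your small-ball bound $(2/f)^\ell$ concerns windows of the top row, whose entries are i.i.d.\ uniform, so it only handles the first step $D=f-1$. Every later step needs the \emph{current} row, at positive depth, to contain no long $\{0,D\}$-valued block. No genericity condition on the top row implies this deterministically.

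For example, take a top row using only the values $x$ and $x+D$ with $x\geq 1$, arranged without long constant runs. It has no zeros and no long $\{0,D'\}$-valued window for any $D'$. Yet its first row is $\{0,D\}$-valued without long zero runs, and from then on evolves as $D$ times the mod-$2$ Pascal rule, so nothing forces the maximum below $D$. Hence ``conditioning only on top-row events and making the decrement lemma deterministic'' cannot work. You need a probability bound for $\{0,D\}$-valued blocks at arbitrary depth, and the proposal has no mechanism for it.

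The missing idea is \Cref{sep-lemma}. With $a_{j+1},\dots,a_{j+i}$ fixed, the map $a_j\mapsto a_{(i,j)}$ sends consecutive integers to consecutive integers. So the values of $a_j$ making $(i,j)$ $\{0,D\}$-valued form a $2$-separated set, of conditional probability at most $\lceil f/2\rceil/f\leq 2/3$. Since $a_{(i,j)}$ depends only on $a_j,\dots,a_{j+i}$, you can peel off the locations of a block from left to right, each time conditioning on the top-row values to its right. This shows that a given block of length $\ell$ at any depth is $\{0,D\}$-valued with probability at most $(2/3)^\ell$ --- note, not $(2/f)^\ell$. The paper feeds exactly this into a union bound over attained towers (\Cref{attain}, \Cref{tower-large}, \Cref{tower-prob}, \Cref{number}) to prove \Cref{thm-general}. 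The statement is the case $\varepsilon=1/3$ of that theorem.

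\textbf{Conversely, the step you flag as the main obstacle is easy.} If the current row is $\{0,\dots,D\}$-valued, alternatives (ii) and (iii) of \Cref{pred-lemma} would create values exceeding $D$. So any $D$-valued location $T$ rows further down has its whole backward light cone, back to the current row, $\{0,D\}$-valued. Thus ``no $\{0,D\}$-valued block of length $T+1$ in the current row'' already forces the maximum below $D$ after $T$ rows. No zero-run hypothesis is needed, since zero blocks are $\{0,D\}$-valued. Nor do you need a propagation argument for intermediate values, which as stated is false anyway: $0,4,2,2,4,0$ becomes $0,0,0$ after three rows.

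With this decrement lemma and the $(2/3)^\ell$ bound, your iteration does close. Take $T\approx C\log n$ rows per step, union bound over $O(nf)$ blocks, and apply Borel--Cantelli to the events $a_{(n-1,1)}>1$. Your choice $\ell\approx(\log n)^{1/2}$ would not do: neither $n(2/f)^{\sqrt{\log n}}$ nor $nf^{-\sqrt{\log n}}$ tends to zero when $f\ll\log\log n$.

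Finally, your opening target that some entire row becomes $\{0,1\}$-valued is false. For $f\equiv 3$ and fixed $m$, the events $\{a_j=2,\ a_{j+1}=\dots=a_{j+m}=0\}$ for $j\in(m+1)\N$ are independent, each has probability $3^{-m-1}$, and each forces $a_{(m,j)}=2$. Only the localized version you then state is usable.
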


However, the functions $f$ considered in \Cref{thm-chase} grow too slowly to be an accurate model for the normalized prime gaps $\frac{p_{n+2}-p_{n+1}}{2}-1$, which by the prime number theorem are of order $\frac{1+o(1)}{2} \log n$ on the average.  The Cram\'er random model \cite{cramer} then suggests that such gaps should behave like independent\footnote{The refined Cram\'er--Granville random model in \cite{granville} predicts instead some local correlations between gaps after taking residues with respect to small primes, but we will not study this refinement further here.  We use $2+ \log n$ here instead of $\log n$ to ensure that the geometric parameter is always greater than $1$.} geometric random variables of parameter $(2+\log n) / 2$; this prediction is supported (at least at the ``bulk'' scale of $\log n$) by other conjectures such as the Hardy--Littlewood prime tuples conjecture \cite{gallagher}, and is also consistent with \eqref{cramer}.

The first main result of this paper is to show that the analogue of Gilbreath's conjecture for the Cram\'er random model holds almost surely:

\begin{theorem}[Gilbreath for Cram\'er random model]\label{thm-main}  Let $a_1,a_2,\dots$ be a sequence of independent geometric random variables of parameter $2/(2+\log n)$ for each $n \geq 1$; thus the $a_n$ take values in the non-negative integers with
  \begin{equation}\label{geom} \P( a_n = k ) = \left(1 - \frac{2}{2+\log n}\right)^k \frac{2}{2+\log n}
  \end{equation}
  for all $n \geq 1$ and $k \geq 0$.  Then almost surely the left diagonal $a_{(1,1)}, a_{(2,1)}, \dots$ of the Gilbreath array generated by $a_1,a_2,\dots$ takes values in $\{0,1\}$ after finitely many rows.
\end{theorem}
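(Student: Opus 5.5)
We plan to use Borel--Cantelli: for each row $N$ we bound the probability that $a_{(N,1)}\notin\{0,1\}$ by something summable in $N$. The entry $a_{(N,1)}$ depends only on $a_1,\dots,a_{N+1}$. We view it as the bottom vertex of the Gilbreath array generated by the last $M$ entries of row $N-M$, namely $a_{(N-M,1)},\dots,a_{(N-M,M+1)}$. Here $M$ is a scale like $\log^{C} N$. The idea is to show that the first $N-M$ rows have already pushed the left edge into a regime that is well mixed yet bounded. We then argue that within the final $M$ rows the maximum drops to $\le 1$ with very high probability.

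\textbf{Step 1 (a priori size bound).} The union bound and \eqref{geom} give $\max_{n\le N+1} a_n \le C\log^2 N$ with probability $1-O(N^{-2})$. Since row maxima are nonincreasing, every entry of the array lies in $[0,C\log^2 N]$.

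\textbf{Step 2 (deterministic inverse theorem).} We want a deterministic statement of the following type. Suppose a length-$L$ block of entries bounded by $D$ generates a triangle whose bottom entries are not all in $\{0,1\}$. Then one of two obstructions must occur:
\begin{itemize}
\item[(a)] some row of the triangle contains a block of zeroes of length $\gg L/D^{O(1)}$;
\item[(b)] some early row contains a long block, of length $\gg L/\mathrm{poly}(D)$, that is $\{0,d\}$-valued for some $d\ge 2$.
\end{itemize}
The mechanism behind this is as follows. Whenever two neighbouring entries $x,y$ are both positive, the entry below is at most $\max(x,y)-1$. So the maximum over a window strictly decreases unless zeroes persistently separate the large values. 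An iteration on $D$, losing polynomial factors in the window length at each stage, should then yield the dichotomy. We would apply this with $L=N$ (or a large window) and $D=C\log^2 N$.

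\textbf{Step 3 (probabilistic exclusion of obstructions).} Next we show that neither obstruction is likely for the random model. Each entry in row $i$ is a function of $i+1$ consecutive independent geometric variables. Working modulo small moduli (first mod $2$, which makes the recursion linear up to sign: $a_{(i,j)}\equiv \sum_k\binom{i}{k}a_{j+k}\pmod 2$), one shows the following. For a fixed location, the event that a row contains a prescribed long $\{0,d\}$ or zero block requires many (nearly) independent linear conditions on the inputs. Each such condition has probability bounded away from $1$, because a geometric variable with large parameter is nearly uniform modulo $d$, and $d\le C\log^2N$. Hence each obstruction has probability $\exp(-c\,L/\mathrm{poly}\log N)$. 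This is summable after a union bound over locations, rows, and $d$, since $L/\mathrm{poly}\log N\gg\log N$.

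\textbf{Main obstacle.} The hardest part is Step 2 together with the decoupling in Step 3. Entries deep in the array depend nonlinearly on the initial data, because of the absolute value. For the random argument we first try to condition on the pattern of the first few rows and use the fact that conditional on coarse data, the residues of $a_n$ modulo $d$ remain nearly uniform. We also use the exact linearity mod $2$ (Lucas' theorem) to exhibit enough independent inputs affecting a long $\{0,d\}$ block. We expect the zero-block case to need a separate argument, showing that long zero runs force exponentially many coincidences among the $a_n$.
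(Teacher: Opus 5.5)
Your architecture (a deterministic structural obstruction, then a union bound excluding it) can be made to work for this theorem. As written, however, both load-bearing steps lack their key idea, and the mechanism you propose for Step~3 fails.

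\textbf{Step 3.} Mod~$2$ linearity only controls zero blocks and $\{0,d\}$-blocks with $d$ even. For odd $d\ge 3$ the recursion is not linear modulo $d$: one has $|x-y|\equiv\pm(x-y)\pmod d$, with the sign depending on the actual values. So ``$a_{(i,j)}\in\{0,d\}$'' is not a linear condition on the residues of the inputs. Moreover, a geometric variable of mean $\asymp\log n$ is far from equidistributed modulo $d$ once $d\gg\log n$, and you allow $d$ up to $C\log^2N$. The tool that works is \Cref{sep-lemma}. With $a_j,\dots,a_{j+i-1}$ fixed, changing $a_{j+i}$ by $1$ changes $a_{(i,j)}$ by exactly $1$. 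Hence the set of values of $a_{j+i}$ making $(i,j)$ $\{0,d\}$-valued is $2$-separated, and has conditional probability at most $\rho_{j+i}$, which is $\le 3/4$ once $j+i\ge 3$. Revealing $a_1,a_2,\dots$ from left to right then shows that a block $(i,j)+[\ell]$ is $\{0,d\}$-valued with probability at most $\prod_{h<\ell}\rho_{j+i+h}\le 2(3/4)^{\ell}$. This holds uniformly in the depth $i$ and in $d\ge 2$, and likewise for $0$-valued blocks. (Your framing via the last $M=\log^C N$ rows and the ``well mixed'' claim is never justified or used, and can be dropped.)

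\textbf{Step 2.} This step is only heuristic. In the form you state it (zero blocks, early rows, an iteration on $D$), it is a statement of the same type as the paper's \Cref{thm-deterministic}, whose proof occupies all of Section~\ref{deterministic-sec}. The missing idea is that for the random model nothing so strong is needed. Suppose all inputs are at most $D$ and $a_{(n-1,1)}>1$. Iterating \Cref{pred-lemma} upward from the bottom vertex (\Cref{attain}) produces a tower: fewer than $D$ triangles $\nabla^{I_j}$, each $\{0,d_j\}$-valued with $d_j\ge 2$, linking the bottom vertex to the top row. By \Cref{tower-large}, every top-row position is the left or right position of some location in the tower. A triangle of sidelength $\ell$ has only $\ell$ left and $\ell$ right positions, so some $I_j$ is a $\{0,d_j\}$-valued block of length at least $n/(2D)$. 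Take $D=C\log^2 n$ from your Step~1 and a union bound over at most $n^2D$ triples $(i,j,d)$. The estimate above then gives $\P(a_{(n-1,1)}>1)\ll n^{-2}+n^2D(3/4)^{n/(2D)}$, which is summable; zero blocks never enter.

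\textbf{Comparison with the paper.} The paper goes one step further. Instead of extracting a single long block, it uses the whole shadow of the tower to bound each tower's probability by $\prod_i\rho_i^{1/2}$, and counts towers by $2^D\binom{n+D}{D}^2$ (\Cref{very-general}). That handles $D$ up to $\delta n$ (\Cref{thm-general}). The single-long-block route needs $D\le cn/\log n$, which is ample for the Cram\'er model.
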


This gives heuristic support for the original Gilbreath conjecture, and suggests that the conjecture is consistent with other consequences of the Cram\'er random model (e.g., the Cram\'er conjecture $p_{n+1} - p_n = O((2 + \log n)^2)$), but does not constitute a rigorous proof of that conjecture.

In fact, in Section \ref{mainthm-sec} we will establish a significantly more general statement, containing both \Cref{thm-main} and \Cref{thm-chase} as special cases.  Define a \emph{$2$-separated set} to be a set $A$ of integers such that $|a-a'| \geq 2$ for all distinct $a,a' \in A$, or equivalently $A$ does not contain a pair $m,m+1$ of consecutive integers. As noted previously, if the initial data takes values in certain $2$-separated sets, such as the even numbers or the multiples of $3$, then the successive differences will not be expected to become $0,1$-valued.  For a general class of random models, it turns out that being ``trapped'' in a $2$-separated set is in some sense the \emph{only} obstruction to the (almost sure) Gilbreath conjecture:

\begin{theorem}[Gilbreath for general random models]\label{thm-general}  For every $\eps>0$ there exists $\delta>0$ such that the following statement holds.  Let $a_1,a_2,\dots$ be a sequence of independent random variables taking values in the non-negative integers, obeying the following axioms:
  \begin{itemize}
  \item[(i)] Almost surely, one has $a_n \leq \delta n$ for all but finitely many $n$.
  \item[(ii)] For all but finitely many $n$, one has $\P( a_n \in A ) \leq 1-\eps$ for every $2$-separated set $A \subset \Z$. 
  \end{itemize}
  Then almost surely the left diagonal $a_{(1,1)}, a_{(2,1)}, \dots$ of the Gilbreath array generated by $a_1,a_2,\dots$ takes values in $\{0,1\}$ after finitely many rows.
\end{theorem}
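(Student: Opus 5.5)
The plan is to prove the statement by Borel--Cantelli. It suffices to show that $\sum_n \P(a_{(n,1)} \geq 2) < \infty$. I would first replace the row-by-row picture with a Markov-type description along antidiagonals. Write $d^{(n)}_i := a_{(i,n+1-i)}$ for $0 \leq i \leq n$ for the last entry of row $i$ in the array generated by $a_1,\dots,a_{n+1}$. Then the next antidiagonal is obtained from the fresh input $a_{n+2}$ by the recursion
$$ d^{(n+1)}_0 = a_{n+2}, \qquad d^{(n+1)}_{i+1} = |d^{(n)}_{i} - d^{(n+1)}_{i}|, $$
and $a_{(n,1)} = d^{(n)}_n$. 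The following facts are then elementary:
\begin{itemize}
\item if the entries of $d^{(n)}$ and $d^{(n+1)}_k$ all lie in $\{0,1\}$ from some depth $k$ on, this persists below depth $k$;
\item entries of a row never exceed the maximum of the row above.
\end{itemize}
So the goal becomes showing that, with summable failure probability, the deep part of the antidiagonal at time $n$ is $\{0,1\}$-valued.

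The heart of the argument would be a deterministic ``inverse'' lemma of the kind advertised in the abstract. Suppose a block of length $L$ in row $i$ has all entries in $[0,D]$ with $D \geq 2$, and the value $D$ (or any value $\geq 2$) still survives in the corresponding block about $L$ rows later. Then the block is degenerate in one of two ways: either it contains a run of zeroes of length $\gg L$, or it is (after the first few rows) $\{0,D'\}$-valued for some $D' \geq 2$. I would prove this by tracking how the local maximum propagates: an entry equal to the maximum $D$ can only reproduce $D$ from a pair $(0,D)$ or $(D,0)$. Any neighbouring pair with values $x,y$ with $|x-y| \notin \{0,D\}$ strictly lowers the level within a bounded number of further rows.

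Translating back to the inputs, either degenerate scenario forces each of roughly $L$ consecutive fresh inputs $a_m$ to lie in a set determined by the previously generated data. For the zero-block case this set is a translate by an earlier antidiagonal entry. For the $\{0,D'\}$ case it is a union of translates of $D'\Z$, and since $D' \geq 2$ this set is $2$-separated. By independence and axiom (ii), this has conditional probability at most $(1-\eps)^{L}$ (or $(1-\eps)^{cL}$).

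Finally, I would run a multiscale argument. By axiom (i), almost surely all inputs up to time $n$ are at most $\delta n$, apart from finitely many early ones, whose influence I would absorb by starting the analysis at a large time. Starting from the level $D_0 = \delta n$, I would apply the lemma with $L \approx C\log n$ to force the maximum below $D$ after $O(L)$ rows, for each $D=D_0,D_0-1,\dots,2$ in turn. The total depth consumed is $O(\delta n \log n)$. Here lies the main obstacle: this is too crude, since it exceeds $n$. To fix it I would use a doubling scheme in which a successful ``breaking'' event halves, rather than decrements, the maximum, or I would exploit that most levels are crossed in $O(1)$ rows. This needs a sharper version of the inverse lemma giving geometric drops, and I expect this to be the delicate part. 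With $\delta$ small in terms of $\eps$, the depth budget then fits within $n$, and a union bound over the $O(\log n)$ stages and $O(n)$ positions gives failure probability $n^{-2}$. That is summable, which completes the plan.
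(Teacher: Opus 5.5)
Your proposal has a genuine gap, and it sits exactly at the step you flag as delicate. That step is not a technicality; it is the whole difficulty.

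\textbf{The inverse lemma fails in the form you need.} Your lemma is false in the ``any value $\geq 2$'' form. Take a block whose first entry is $D\geq L+1$ and whose remaining entries are typical i.i.d.\ fair bits.
\begin{itemize}
\item The bits evolve by XOR, like Pascal's triangle mod $2$. Each row is again i.i.d.\ fair bits, so the triangle has no zero runs longer than $O(\log L)$, and no row of length $\gg\log L$ is $\{0,D'\}$-valued.
\item The left edge drops by at most $1$ per row, so the bottom vertex is at least $D-L+1\geq 2$.
\end{itemize}
What is true is only the version in which the maximum $D$ itself reaches the bottom vertex. Iterating \Cref{pred-lemma} then forces the whole triangle to be $\{0,D\}$-valued, and by \Cref{sep-lemma} that costs $(1-\eps)^{L}$.

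\textbf{Consequence for your scheme.} The true version gives only a decrement of the row maximum every $L\approx C\log n$ rows. That is the depth budget $C\delta n\log n>n$ you found. Carried out honestly, your scheme proves the theorem only under $a_n\leq c(\eps)\,n/\log n$. That suffices for \Cref{thm-main}, but not for the stated result. The same spike example rules out any deterministic halving lemma: a local maximum of size $D\gg\log n$ can decay by $O(1)$ per row for far more than $\log n$ rows, with no long zero or $\{0,d\}$-valued block anywhere. So the geometric drops you need are not available, and you give no probabilistic substitute.

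\textbf{The missing idea.} Abandon row maxima and make one global comparison of entropy against probability at scale $n$.
\begin{itemize}
\item Trace the single bad location $(n-1,1)$ upward. \Cref{attain} produces an attained tower: a chain of $\{0,d_j\}$-valued triangles, $2\leq d_1<\dots<d_k\leq D$, joining $(n-1,1)$ to the top row.
\item By \Cref{tower-large}, every input index $i\leq n$ is the left or right position of some tower location. Pigeonhole plus \Cref{sep-lemma} then bounds the attainment probability by $(\prod_{i\leq n}\rho_i)^{1/2}$. This is exponentially small in $n$, whereas your per-level events are only polynomially small.
\item A tower is specified by $3k\leq 3D$ integers (values, depths, left positions). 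So when $D=\delta n$ there are at most $2^D\binom{n+D}{D}^2=e^{O(\delta\log(1/\delta)n)}$ towers, and the union bound closes for $\delta$ small in terms of $\eps$.
\end{itemize}
In effect, the union bound is taken once over whole chains of levels rather than separately at each level, so the $\log n$ cost per level disappears. Your spike is simply a tower with many tiny triangles, which this count absorbs automatically.

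\textbf{Two smaller points.}
\begin{itemize}
\item $\sum_n\P(a_{(n,1)}\geq 2)$ can diverge under the hypotheses. Take $\P(a_1\geq x)\approx 1/\log x$ and fair bits elsewhere; then $a_{(n,1)}\geq a_1-n$. So Borel--Cantelli must be applied to $B_n=\{a_1,\dots,a_n\leq\delta n,\ a_{(n-1,1)}>1\}$ instead.
\item To get a uniform bound $D=\delta n$, replace each $a_i>\delta n$ by a fair coin in $\{0,1\}$. This keeps axiom (ii) with $\eps/3$ in place of $\eps$.
\end{itemize}
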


The hypothesis (i) asserts that the $a_n$ do not grow at a linear or faster rate.  We suspect that this is the threshold growth behavior, but can only prove that the theorem breaks down once the $a_n$ grow exponentially; see \Cref{exp-rem}.  The condition (ii) asserts that the random variables $a_n$ do not concentrate in $2$-separated sets.  Some hypothesis of this general type is needed, since if for instance all the $a_n$ took values in the even integers, then the entire Gilbreath array would also be constrained to even values, and the conclusion could then very well be false.

Let us now quickly see how \Cref{thm-general} implies \Cref{thm-main}.  From \eqref{geom} we have
$$ \P(a_n = k+1) = \left(1 + O\left(\frac{1}{2+\log n}\right)\right) \P(a_n = k)$$
for any $k$, hence
$$ \P(a_n \in A+1) = \left(1 + O\left(\frac{1}{2+\log n}\right)\right) \P(a_n \in A)$$
for any set $A$.  For $2$-separated $A$, it holds that $A$ and $A+1$ are disjoint, and hence
$$ \P(a_n \in A) \leq \frac{1}{2} + O\left(\frac{1}{2+\log n}\right).$$
This gives axiom (ii) for any $\eps < 1/2$.  Next, for any fixed $\delta>0$, we see from \eqref{geom} that
$$ \P(a_n > \delta n) \leq \exp( - c \delta n / (2+\log n) )$$
for some absolute constant $c>0$, and axiom (i) then follows (with plenty of room to spare) from the Borel--Cantelli lemma.  Thus \Cref{thm-main} follows from \Cref{thm-general}.  A similar argument also recovers \Cref{thm-chase}; in fact we may now take $f(n)$ growing as fast as $\delta n$ for some small absolute constant $\delta>0$ and still obtain the conclusion of that theorem.

We briefly summarize the proof method of \Cref{thm-general} as follows.  An elementary analysis of the difference equation \eqref{ai-recur} will reveal that if an entry $a_{(n-1,1)}$ of the Gilbreath array does \emph{not} take values in $\{0,1\}$, then this creates a ``tower'' of adjacent triangles that connect $(n-1,1)$ to the top row of the array, where the top row of the $i^{\mathrm{th}}$ triangle is $\{0,d\}$-valued for some $d \geq 2$.  As it turns out, the contractive properties of the recurrence \eqref{ai-recur}, combined with the axiom (ii), ensure that it is exponentially unlikely to have so many values of the array constrained to a $2$-separated set such as $\{0,d\}$.  On the other hand, the condition (i), combined with standard counting arguments, will show that the total number of possible towers grows not much faster than $\binom{n}{\delta n}$.  For $\delta$ small enough, we will then be able to conclude from the union bound and the Borel--Cantelli lemma.

\subsection{A continuous model problem}\label{continuous model}

Next, we study a continuous random model, in which the initial input $a_n$ are now standard independent exponential distributions, so that
$$ \P(a_n \geq t) = e^{-t}$$
for all $n \geq 1$ and $t \geq 0$.  The real numbers $a_n \frac{2+\log n}{2}$ are then models for the prime gaps.  The Gilbreath array associated to these $a_n$ is now a stationary process, so in particular the distribution of the $a_{(i,j)}$ depends only on the depth $i$ and not on the position $j$; however there is a non-trivial coupling between the $a_{(i,j)}$ as $j$ varies, which becomes increasingly complicated as $i$ increases.  As each $a_{(i,j)}$ is a piecewise linear function applied to exponential random variables, it has a finite expectation, thus we have
\begin{equation}\label{c-def}
   \E a_{(i,j)} = c_i
\end{equation}
for some sequence of positive real numbers $c_0, c_1, c_2, \dots$.  The Cram\'er model (or the calculation of Gallagher \cite{gallagher}) then predicts that for the Gilbreath array for the normalized primes gaps, $a_{(n,j)}$ should behave on average like $\frac{c_j}{2} \log n$ as $n \to \infty$ for any fixed depth $j$.  If one knew that the $c_i$ decayed faster than $1/\log i$, then this would provide further support for the Gilbreath conjecture, at least in some averaged sense.

The $c_i$ are in fact rational and can in principle be computed explicitly as follows.  By a standard calculation, one can express $a_1,\dots,a_{i+1}$ as $sb_1,\dots,sb_{i+1}$, where $s \coloneqq a_1+\dots+a_{i+1}$ is a certain random variable of expectation $i+1$, and $b_1,\dots,b_{i+1}$ are independent of $s$, and uniformly distributed on the simplex of tuples $(b_1,\dots,b_{i+1})$ with $b_1,\dots,b_{i+1} \geq 0$ and $b_1+\dots+b_{i+1}=1$.  By homogeneity, we may then write
$$ c_i = (i+1) \E b_{(i,1)}$$
and the latter expectation is an integral of a piecewise linear function on the simplex with rational coefficients and therefore expressible as a rational number.  For instance,
$$ c_0 = 1,$$
$$ c_1 = 2 \int_0^1 |b_1 - (1-b_1)|\ db_1 = 1,$$
and
\begin{align*}
c_2 = 3 \times 2! \times \int_0^1 \int_0^{1-b_1} ||b_1-b_2|-|b_2-(1-b_1-b_2)||\ db_2 db_1 = \frac{7}{9} = 0.777\dots.
\end{align*}
With more effort, one can compute that $c_3 = \frac{227}{288} \approx 0.7882$; in particular, we see the unexpected phenomenon that $c_i$ is not monotone in $i$.  This irregularity of $c_i$ may be related to the fact (from Lucas' theorem) that the number of odd entries in the $i^{\mathrm{th}}$ row of Pascal's triangle is two to the number of ones in the binary expansion of $i$, which does not depend in a monotone fashion on $i$.  In Figure \ref{fig:montecarlo} we provide a Monte Carlo simulation of the subsequent values of $c_n$.

Intuitively, one might expect the $c_i$ to decrease at an exponential rate.  However, this turns out not to be the case, as we have the following lower bound:

\begin{theorem}[Lower bound]\label{thm-lower} We have $\sum_{i=0}^n c_i \geq \log(n+e)$ for any $n$.
\end{theorem}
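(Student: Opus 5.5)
The plan is to prove a deterministic pathwise inequality $\sum_{i=0}^n a_{(i,1)} \geq Z_n$, where $Z_n$ is a random variable built from the top row $a_1,\dots,a_{n+1}$ whose expectation can be computed exactly, and then take expectations. By stationarity, $\E \sum_{i=0}^n a_{(i,1)} = \sum_{i=0}^n c_i$. The natural target is an order statistic of independent standard exponentials. For example, $\E \max(a_1,\dots,a_{n+1}) = H_{n+1} = 1 + \frac12 + \dots + \frac{1}{n+1}$. The elementary estimate $H_{n+1} \geq \log(n+e)$ holds: it is an equality at $n=0$, and for the induction step one checks $\frac{1}{n+2} \geq \log\frac{n+1+e}{n+e}$, which follows from $\log(1+x)\leq x$ and $n+e \geq n+2$. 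So any $Z_n$ with $\E Z_n \geq H_{n+1}$ suffices.

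The main step is to find the correct deterministic inequality, and this is the part I expect to be the main obstacle. The naive choice $Z_n = \max_j a_j$ fails: for the data $(0,5,10)$ the left diagonal is $0,5,0$, with sum $5 < 10$. The usable tool is the one-sided triangle inequality $|x-y| \geq x-y$, applied along the recursion \eqref{ai-recur}. It gives $a_{(i+1,1)} \geq a_{(i,1)} - a_{(i,2)}$ and $a_{(i,2)} \leq a_{(i,1)}+a_{(i+1,1)}$.

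I would proceed by induction on $n$, using the decomposition
$$\sum_{i=0}^n a_{(i,1)} = a_1 + \sum_{i=0}^{n-1} a_{(i+1,1)},$$
where the second sum is the left-diagonal sum of the Gilbreath array generated by the first-difference row $a_{(1,1)},\dots,a_{(1,n)}$. Applying the induction hypothesis to that row yields a recursively defined quantity $Z_n(a) = a_1 + Z_{n-1}(|a_1-a_2|,\dots,|a_n-a_{n+1}|)$. The goal is then to find a closed-form lower bound for $Z_n$ that involves only the top row. My first candidate is a "record" type functional, for instance the sum of the successive upward jumps of the running maximum of $a_1,a_2,\dots$ read from the left, suitably capped. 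For $(0,5,10)$ this gives $5$, which matches. The expectation of such a functional is computable from the memorylessness of the exponential distribution: the expected increment of the running maximum at step $k$ is $1/k$. Summing these increments reproduces $H_{n+1}$.

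Once the right functional is identified, verifying the inequality should be a short case analysis. The key case is that a new record at position $k+1$ forces a large entry $|a_k - a_{k+1}|$ in the next row. The induction hypothesis applied to that row then charges the jump to the left-diagonal sum. If this record functional does not survive the induction, my fallback is to work with a weaker functional $Z_n$ chosen so that $\E Z_n$ still has harmonic growth: it need only be at least $\sum_{k\le n+1} 1/k$. This slack is all that the bound $\log(n+e)$ requires.
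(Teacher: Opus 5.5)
\textbf{There is a genuine gap.} The pathwise inequality $\sum_{i=0}^n a_{(i,1)} \geq Z_n$, which you correctly flag as the main obstacle, is never produced, and the record functional you sketch cannot satisfy it.

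Take $a = (0,1,2,\dots,n)$. The first difference row is $(1,\dots,1)$ and all later rows vanish. So the left diagonal is $0,1,0,\dots,0$ with sum $1$, while the running maximum makes $n$ unit jumps.

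The charging step is where it breaks. Each record at $k+1$ does give an entry $|a_k-a_{k+1}|$ at least as large as the jump. But here these $n$ entries are all equal, so in the difference row they produce a single record, and the induction hypothesis applied to that row pays for only one of them. (Without a cap, the jumps in your own example $(0,5,10)$ also sum to $10$, not $5$.) Any ``cap'' must therefore shrink the functional to the size of one increment on such inputs. You give no reason why what survives still has expectation of order $\log n$.

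Two further points. First, the recursion $Z_n(a) = a_1 + Z_{n-1}(|a_1-a_2|,\dots,|a_n-a_{n+1}|)$ must be run purely pathwise. The difference row has exponential marginals but dependent neighbouring entries, so no expectation bound for i.i.d.\ data can be fed back in. Second, $\E Z_n \geq H_{n+1}$ is strictly stronger than the theorem: its excess over $\log(n+e)$ tends to Euler's constant. So it is an extra burden rather than slack.

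There is also a structural reason a pathwise lower-bound induction stalls. Iterating the one-sided inequality $a_{(k,1)} \geq a_{(k-1,1)} - a_{(k-1,2)}$ gives $a_{(n,1)} \geq \max\bigl(a_1 - \sum_{i=0}^{n-1} a_{(i,2)}, 0\bigr)$. So a lower bound on the left diagonal at level $n$ requires an \emph{upper} bound on the diagonal sum of the array shifted one step to the right.

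The paper resolves this in expectation, not pathwise:
\begin{itemize}
\item Conditioning on $a_2,\dots,a_{n+1}$ fixes the shifted sum $s$, while $a_1$ remains an independent exponential. So the conditional expectation of the lower bound $(a_1-s)_+$ is $e^{-s}$.
\item Jensen's inequality and stationarity ($\E s = \sum_{i<n} c_i$) then give $c_n \geq \exp(-\sum_{i=0}^{n-1} c_i)$. Convexity of $s \mapsto e^{-s}$ is exactly what trades off a large shifted sum against a large excess of $a_1$.
\item Writing $\sigma_n = \sum_{i \leq n} c_i$, one gets $e^{\sigma_n} \geq (1+c_n) e^{\sigma_{n-1}} \geq e^{\sigma_{n-1}} + 1$ with $e^{\sigma_0} = e$, hence $e^{\sigma_n} \geq n+e$.
\end{itemize}
The idea missing from your plan is this horizontal comparison between adjacent diagonals, closed by the independence of $a_1$ and convexity. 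Your vertical recursion through the dependent difference row cannot supply it.
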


This result turns out to be an easy consequence of the triangle inequality and Jensen's inequality; we prove it in Section \ref{clower-sec}.  
Informally, this theorem asserts that $c_i$ cannot decay faster than $1/i$, although the precise decay of $c_i$ remains somewhat mysterious.  On the other hand, from \eqref{ai-recur} and the triangle inequality we at least have the bound
$$ c_{i+1} \leq 2c_i$$
for all $i$.  

\begin{remark} Despite the irregular behavior of $c_i$, one could still tentatively conjecture that $c_i$ decays at the maximal rate $1/i$ consistent with \Cref{thm-lower}.  If this were the case, this would suggest that the upper bound of $\delta n$ in \Cref{thm-main} is close to optimal, since we now only expect Gilbreath arrays of length $n$ to decay by a factor of $n$ on average.  Unfortunately, we were not able to locate a more rigorous support for this belief; we can only show the much weaker claim that one cannot replace $\delta n$ by any quantity that is at least than $2^{n+1}$ (see \Cref{exp-rem}). We were also unable to rigorously establish the numerically and intuitively evident claims that the sequence $c_i$ converges to zero; even the weaker claim that it is bounded we cannot prove.
\end{remark}

\begin{figure}
    \centering
    \includegraphics[width=0.5\linewidth]{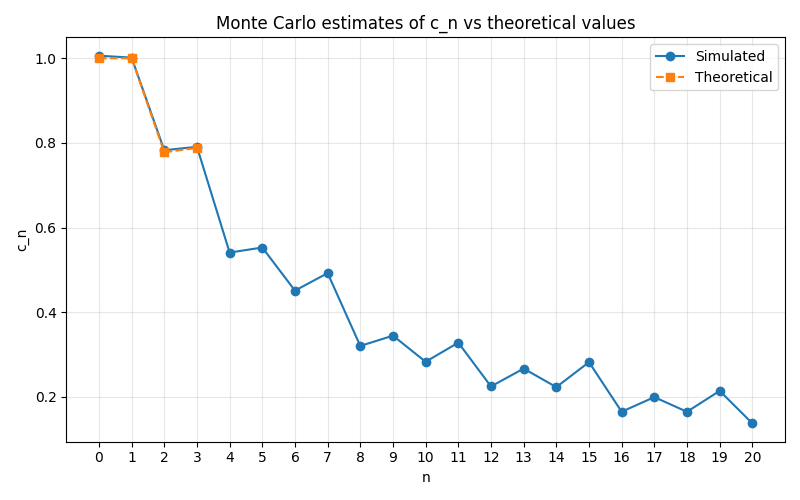}
    \caption{Approximate values of $c_n$ for $0 \leq n \leq 20$, obtained by Monte Carlo simulation from $10^6$ samples, compared against the theoretical values of $n$ for $0 \leq n \leq 3$.}
    \label{fig:montecarlo}
\end{figure}

\subsection{Deterministic analysis}

We return now to the deterministic analysis of Gilbreath arrays, in which no randomness on the initial data is assumed.  The situation is now significantly more difficult.  Nevertheless, we are able to provide an ``inverse theorem'' that makes precise the intuition that the two scenarios identified previously in the introduction, namely long $0$-valued blocks, or very long and shallow $\{0,d\}$-valued blocks for some $d \geq 2$, are in some sense the ``only'' obstructions to Gilbreath arrays decaying to zero if the initial data is not too large.  More precisely, we have

\begin{theorem}[Deterministic criterion]\label{thm-deterministic}  Let $M, L \geq 1$ and let $1 \leq N' \leq N$ be integers, and let
\begin{equation}\label{rtag} 
1 < R_0 < R_1 < R_2 < \dots < R_M < \frac{N-N'}{2}
\end{equation}
be a sequence of natural numbers such that
\begin{equation}\label{rdouble}
 R_{m} \geq 4R_{m-1}
\end{equation}
for all $m=1,\dots,M$, and also
\begin{equation}\label{R0-lower} R_0 \geq 100 L\cdot 8^M
\end{equation}
for a sufficiently large absolute constant $C_1$.
Let $a_1,\dots,a_N$ be a sequence of non-negative integers, with the resulting Gilbreath array $a_{(i,j)}$ for $0 \leq i \leq N-1$ and $1 \leq j \leq N-i$.  Assume the following axioms:
\begin{itemize}
\item[(i)] (No large initial values) One has $a_n \leq 2^M$ for all $n=0,\dots,N$.
\item[(ii)] (No long block of zeroes) There does not exist integers $0 \leq i \leq N-L$ and $1 \leq j \leq N-i-L+1$ such that $a_{(i,j)} = a_{(i,j+1)} = \dots = a_{(i,j+L-1)} = 0$.
\item[(iii)] (No long shallow two-valued block)  There does not exist integers $1 \leq m \leq M$, $2^{M-m} < d \leq 2^{M-m+1}$, $0 \leq i \leq 2R_{m-1}$, $k \geq R_m-3R_{m-1}$, and
$N' \leq j \leq N - i - k$, such that $a_{(i,j)}, a_{(i,j+1)}, \dots, a_{(i,j+k-1)} \in \{0,d\}$.
\end{itemize}
Then $a_{(N-1,1)} \in \{0,1\}$.
\end{theorem}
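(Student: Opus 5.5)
We argue by contradiction, using a multiscale, scale-halving scheme: we show that the maximum of the array over suitable regions drops from $2^M$ to at most $1$ after $M$ stages, with stage $m$ cutting the bound from $2^{M-m+1}$ to $2^{M-m}$. The basic local fact is the following. Suppose all entries in a row segment lie in $[0,D]$. Under the recursion \eqref{ai-recur}, an entry below can equal $D$ only if the two entries above it are $0$ and $D$. Hence an entry of value $d\in(D/2,D]$ at depth $i+r$ forces, going upward, a ``tower'' of entries in $\{0,d'\}$ with $d'\geq d$, together with a constraint that pairs of entries are far apart. We make this precise in a combinatorial lemma. If $a_{(i,j)},\dots,a_{(i,j+K)}\in[0,D]$ and some entry in the triangle of depth $r$ below this segment exceeds $D/2$, then one of two things happens. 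Either that triangle contains a long run of zeroes (excluded by (ii) once $r\gg L$), or some row near the top of the triangle contains a block of length about $r$ valued in $\{0,d\}$ with $D/2<d\leq D$. The intended mechanism is that entries in $(D/2,D]$ can only arise as $|x-y|$ with one of $x,y$ small ($<D/2$) and the other large. Tracking where the small values come from, a nonzero small value would propagate downward and eventually cause a drop. So over depth $r$, unless zero-runs of length $L$ occur, the values along a path are pinned to $\{0,d\}$.

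The iteration runs as follows. Set $D_m=2^{M-m}$, so the initial bound (i) is $D_0=2^M$. We claim by induction on $m$ that every entry $a_{(i,j)}$ with $i\geq 2R_{m-1}$ (roughly) and $j\geq N'$ satisfies $a_{(i,j)}\leq D_m$. For the inductive step, suppose some entry at depth $\approx 2R_m$ exceeds $D_m$. Apply the lemma to the rows starting at depth $\approx 2R_{m-1}$, where the bound $D_{m-1}=2D_m$ already holds. This produces a $\{0,d\}$-valued block with $2^{M-m}<d\leq 2^{M-m+1}$ at some depth $i\leq 2R_{m-1}$. Its length is at least $R_m-3R_{m-1}$, and this quantity is positive and large by \eqref{rdouble}. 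Such a block contradicts (iii). Here \eqref{R0-lower} supplies the room $R_0\gg L 8^M$: it absorbs the losses where zero-blocks of length $<L$ interrupt the $\{0,d\}$ structure, losing a bounded factor ($\leq 8$) per stage. After $M$ stages all entries beyond depth $2R_M<N-N'$ lie in $\{0,1\}$. The condition $R_M<(N-N')/2$ ensures the bottom vertex $(N-1,1)$ sits inside the controlled region, since its triangle of dependence at depth $2R_M$ involves only columns $\geq N'$ after relabeling. This gives $a_{(N-1,1)}\in\{0,1\}$.

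The main obstacle is the combinatorial lemma: showing that, absent a long zero run, an entry exceeding $D/2$ at depth $r$ forces a genuinely long $\{0,d\}$ block at a shallow row, of length comparable to $r$ and not merely a thin path. My first attempt would be a backward induction on the triangle. Consider the set of positions whose value lies in $(D/2,D]$, and show that within a depth-$r$ triangle these positions must form a Pascal-mod-2 (Sierpi\'nski) pattern on a $\{0,d\}$ background. Any intermediate value $x\in(0,D/2]$ entering the triangle would, combined with a neighbor, produce values $|d-x|<d$ that then spread. Such values can only be blocked by zero runs, and those have length $<L$ by (ii). Quantifying this spreading against $L$, with a $\log$-depth loss handled by the $8^M$ factor, is where I expect most of the work to lie.
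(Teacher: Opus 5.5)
Your architecture is the paper's: pick the scale $m$ at which the dyadic bound first fails, note that a value in $(D/2,D]$ is supported by a tower of $\{0,d'\}$-triangles with increasing $d'$, and extract a long shallow $\{0,d\}$-block. But the proposal has a genuine gap. The step you flag as the main obstacle, turning a thin tower into a block whose length is comparable to a whole row, is the entire content of the proof, and you do not supply it. The Sierpi\'nski/``spreading of intermediate values'' heuristic gives no quantity that can be controlled: a value $|d-x|$ with $x$ small can still exceed $D/2$, and nothing monotone happens going downward. The paper closes this step with three ingredients that are absent from your sketch.
\begin{itemize}
\item[(a)] \emph{Coarse monotonicity.} If $p$ has value $d\geq 1$, then in every row of $\nabla_p$, every point is within horizontal distance $2^M(L+\frac12)$ of an entry $\geq d$. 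This follows from the parentage lemma plus axiom (ii). Going upward, the value can strictly increase at most $2^M$ times, each increase costs drift at most $L+\frac12$, and between increases the $\{0,d\}$-block can be kept to length $L$ because every length-$L$ $\{0,d\}$-block attains $d$.
\item[(b)] \emph{Pigeonhole on a tower.} Take an attained tower in the top $R_{m-1}$ rows of $\nabla^{I_*}$. It has at most $2^{M-m+1}$ triangles whose sidelengths sum to $R_{m-1}$, so one has sidelength at least $100L\cdot 4^M$. This gives a $\{0,d'\}$-block of that length with $d'\geq d_*$.
\item[(c)] \emph{Small-or-huge dichotomy for maximal $\{0,d\}$-blocks.} Suppose such a block $I$ has length $>100L\cdot 4^M$ and leaves a gap of more than $5L\cdot 2^M$ beside it. Then (a) places a point of value $d_0\geq d_*$ in the gap. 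The ancestors of the maximal $\{0,d_0\}$-block through that point cannot remain maximal, since they would overlap a row of the $\{0,d\}$-triangle $\nabla^I$ in about $L$ places. So, by parentage, some ancestor attains a strictly larger value in each of its subblocks of length $L+1$. Iterating while climbing beside $\nabla^I$ produces more than $2^{M-m}$ strict increases of a value that started above $2^{M-m}$ and is capped at $2^{M-m+1}$, which is impossible.
\end{itemize}
It is this bounded number of strict \emph{upward} increases, not any zero-run spreading, that forces the block to be long. This is also where $R_0\geq 100L\cdot 8^M = 2^M\cdot 100L\cdot 4^M$ comes from, not from a logarithmic loss.

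Your endgame is also wrong as justified. The vertex $(N-1,1)$ has $j=1$, so at every depth its backward light cone contains the columns $j<N'$, about which (iii) says nothing. Knowing that all entries with $j\geq N'$ past depth $2R_M$ lie in $\{0,1\}$ therefore does not bound $a_{(N-1,1)}$. The paper bridges this with (a) as well: a value $>1$ at $(N-1,1)$ forces a value $>1$ in the row at depth $R_M$ restricted to $j\geq N'$, which has length at least $2^{M+1}(L+1)$.

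Finally, your indexing does not match (iii). Your triangle starts at depth $2R_{m-1}$, so any block found ``near its top'' lies at depth $\geq 2R_{m-1}$, whereas (iii) for $d\in(2^{M-m},2^{M-m+1}]$ requires depth $\leq 2R_{m-1}$. The workable arrangement is the paper's: a triangle $\nabla^{I_*}$ from depth $R_{m-1}$ down to $R_m$, with the long block found among depths $R_{m-1}$ to $2R_{m-1}$.
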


For instance, if we take $N$ is large, $N' = \lfloor N/2\rfloor$, $L = \lfloor \log^{10} N \rfloor$, $10 \log\log N \leq M \leq \log^{1/10} N$, and
$$ R_m \coloneqq 100 L \cdot 8^{(m+1) M},$$
then this theorem implies that we would be able to guarantee $a_{(N-1,1)} \in \{0,1\}$ provided we can establish the following hypotheses (stated somewhat informally):
\begin{itemize}
    \item[(i)]  One has $a_n \ll \log^{10} N$ (say) for all $n=0,\dots,N$.
    \item[(ii)] The array does not contain a block of zeroes of length $\sim \log^{10} N$.
    \item[(iii)]  The right half of the array does not contain a $\{0,d\}$-valued block of some depth $i \ll \exp(O(M^2))$ and length $\gg 8^M i$, for some $d \geq 2$.    
\end{itemize}
For $a_n$ equal to the normalized prime gaps, hypothesis (i) follows from Cram\'er's conjecture \eqref{cramer}.  The other two hypotheses are plausible from probabilistic heuristics, but unfortunately look difficult to establish rigorously, even if one assumes strong conjectures on the primes such as the Hardy--Littlewood prime tuples conjecture.

We now discuss some of the ideas of the proof of \Cref{thm-deterministic}.  It is convenient to work in a contrapositive setting where the hypotheses (i) and (ii) hold, but the conclusion $a_{(N-1,1)} \in \{0,1\}$ fails, in which case the task is now to produce a long shallow two-valued block.  A lengthy but elementary analysis of the recurrence \eqref{ai-recur}, using the pigeonhole principle, will eventually reveal the existence of a large triangle $\nabla^{I_*}$ which attains a large value at its bottom vertex (more than half its maximum value on the entire triangle). With an additional pigeonholing argument, one can conclude that this triangle in turn contains a large subtriangle $\nabla^J$ which only attains two values $0,d$ for some large $d$.  By additional elementary arguments we will be able to show that the upper edge $J$ of this subtriangle can be enlarged to be far longer in length while still only attaining the two values $0,d$, which will give the required contrapositive conclusion.

\subsection{Acknowledgements and AI disclosure}

The third author is supported by NSF grant DMS-2347850, and is indebted to Fedor Nazarov for supplying the arguments in \cite{nazarov}. The first author thanks David Chase for support. Gemini was used to generate the code for Figure \ref{fig:montecarlo}. 

The original proof of \Cref{thm-deterministic} (with weaker bounds) relied on an argument of Nazarov \cite{nazarov} (later improved by Anders Martinsson) which demonstrated that one could not efficiently pack a downward-pointing equilateral triangle by a finite collection of upward equilateral triangles.  However, through a successive sequence of optimizations of the proofs, there is now little trace of this original argument in the current version of the paper.

\section{Lower bound}\label{clower-sec}

We begin with the proof of \Cref{thm-lower}, which is quite short.  The key inequality is

\begin{proposition}[Key inequality]
\label{sharper} Let $c_i$ be the constants defined in Subsection~\ref{continuous model}. For any $n \geq 0$, one has
$$ c_n \geq \exp\left( - \sum_{i=0}^{n-1} c_i\right).$$
\end{proposition}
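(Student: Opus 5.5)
The plan is to compare the new input $a_{n+1}$ with the bottom entry $a_{(n,1)}$ via a telescoping chain of triangle inequalities. The chain should separate $a_{n+1}$ from a quantity depending only on $a_1,\dots,a_n$; the memoryless property of the exponential distribution and Jensen's inequality then finish the argument.

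\emph{Step 1 (triangle inequality chain).} From \eqref{ai-recur} we have $a_{(i+1,j)} = |a_{(i,j)}-a_{(i,j+1)}|$, hence
$$a_{(i,j+1)} \leq a_{(i,j)} + a_{(i+1,j)}$$
for all admissible $i,j$. I apply this successively with $(i,j) = (0,n), (1,n-1), \dots, (n-1,1)$:
$$a_{(0,n+1)} \leq a_{(0,n)} + a_{(1,n)}, \qquad a_{(1,n)} \leq a_{(1,n-1)} + a_{(2,n-1)}, \qquad \dots, \qquad a_{(n-1,2)} \leq a_{(n-1,1)} + a_{(n,1)}.$$
Chaining these gives
$$a_{n+1} \leq X + a_{(n,1)}, \qquad X \coloneqq \sum_{i=0}^{n-1} a_{(i,n-i)}.$$
The key point is that each $a_{(i,n-i)}$ lies in the Gilbreath array generated by $a_1,\dots,a_n$ alone. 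So $X$ is a function of $a_1,\dots,a_n$ only, and is independent of $a_{n+1}$. Since $a_{(n,1)} \geq 0$, this yields
$$a_{(n,1)} \geq (a_{n+1} - X)_+.$$

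\emph{Step 2 (conditioning and memorylessness).} I condition on $a_1,\dots,a_n$. Because $a_{n+1}$ is a standard exponential independent of $X$, for fixed $x \geq 0$ we have
$$\E (a_{n+1}-x)_+ = \int_x^\infty e^{-t}\,dt = e^{-x}.$$
Hence $\E[a_{(n,1)} \mid a_1,\dots,a_n] \geq e^{-X}$, and taking expectations gives $c_n \geq \E e^{-X}$.

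\emph{Step 3 (Jensen).} By convexity of $x \mapsto e^{-x}$, Jensen's inequality gives $\E e^{-X} \geq e^{-\E X}$. By stationarity, i.e.\ \eqref{c-def}, $\E a_{(i,n-i)} = c_i$, so $\E X = \sum_{i=0}^{n-1} c_i$ and the claim follows. The case $n=0$ is trivial, since $c_0 = 1 = e^0$.

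The only real obstacle is Step 1: one must choose the telescoping path so that every intermediate term avoids $a_{n+1}$. The path along the entries $a_{(i,n-i)}$, i.e.\ the right edge of the sub-triangle generated by $a_1,\dots,a_n$, is exactly such a choice. After that, Steps 2 and 3 are routine.
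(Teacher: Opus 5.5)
Your proof is correct and is the paper's argument reflected left-to-right: the paper telescopes down the left edge to get $a_{(n,1)} \geq \max\bigl(a_1 - \sum_{i=0}^{n-1} a_{(i,2)}, 0\bigr)$ and conditions on the other inputs so that $a_1$ is the free exponential variable, while you telescope along the right edge and free up $a_{n+1}$. The rest of the argument is the same in both: the computation $\E(a-x)_+ = e^{-x}$, then Jensen's inequality combined with stationarity.
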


The intuition here is that if $c_0,\dots,c_{n-1}$ are not too large, then the depth $0,\dots,n-1$ rows of a Gilbreath array will not be large on average, but this, together with the triangle inequality and the exponential distribution of the initial data, will force the depth $n$ row to be large.

\begin{proof}
From \eqref{c-def}, we have
$$ c_n = \E a_{(n,1)}.$$
From \eqref{ai-recur} we have
$$ a_{(k,1)} \geq \max( a_{(k-1,1)} - a_{(k-1,2)}, 0)$$
for all $k=1,\dots,n$, and hence by induction
$$ a_{(n,1)} \geq \max\left( a_1 - \sum_{i=0}^{n-1} a_{(i,2)}, 0\right).$$
If we condition the initial data $a_2,\dots,a_n$ to be fixed, then the quantity $\sum_{i=0}^{n-1} a_{(i,2)}$ becomes deterministic, while $a_1$ remains an exponential random variable.  By a routine computation, the conditional expectation of $a_{(n,1)}$ relative to this data is then
$$ \int_0^\infty \max\left( t - \sum_{i=0}^{n-1} a_{(i,2)}, 0\right) e^{-t}\ dt = \exp\left( - \sum_{i=0}^{n-1} a_{(i,2)} \right)$$
and thus by the law of total expectation
$$ c_n \geq \E \exp\left( - \sum_{i=0}^{n-1} a_{(i,2)} \right).$$
But $\sum_{i=0}^{n-1} a_{(i,2)}$ has mean $\sum_{i=0}^{n-1} c_i$, and the claim now follows from Jensen's inequality.
\end{proof}

From Proposition \ref{sharper} we see that
$$
  \exp\left( \sum_{i=0}^{n} c_i\right) \geq (1+c_n) \exp\left( \sum_{i=0}^{n-1} c_i\right) \geq \exp\left( \sum_{i=0}^{n-1} c_i\right)+1
$$
and thus by induction
$$   \exp\left( \sum_{i=0}^{n} c_i\right)  \geq n+e.$$
Theorem \ref{thm-lower} follows.

\section{Notation}\label{notation}

We now set out the notation that we will use for the rest of the paper.
We use $\N = \{1,2,\dots\}$ to denote the natural numbers, and $\Z_{\geq 0} = \{0,1,2,\dots\}$ to denote the non-negative integers.

We use standard asymptotic notation. For quantities $X$ and $Y$ (that might depend on other parameters), we write $X \ll Y$ or $X = O(Y)$ to denote that there exists $C > 0$ such that $|X| \le C Y$ (no matter the other parameters). We write $Y \gg X$ to denote $X \ll Y$. 

\subsection{The array of locations}\label{locations-sec}

We will parameterize the infinite triangular array appearing in \Cref{thm-main} by the space
$$ \nabla^\infty \coloneqq \{ (i,j): i \in \Z_{\geq 0}; j \in \N \}$$
as depicted by the following diagram:
$$
\begin{array}{ccccccccc}
  (0,1) & & (0,2) & & (0,3) & & (0,4) & & \dots \\
  & (1,1) & & (1,2) & & (1,3) & & \dots\\
  & & (2,1) & & (2,2) & & \dots \\
  & & & (3,1) & & \dots\\
  & & & & \dots
\end{array}.
$$
We refer to elements $(i,j)$ of $\nabla^\infty$ as \emph{locations}.  We make the following definitions:
\begin{itemize}
  \item The \emph{depth} $i(p)$ of a location $p = (i,j)$ is defined to be $i$.
  \item The \emph{left position} $j_l(p)$ of a location $p = (i,j)$ is defined to be $j$.
  \item The \emph{right position} $j_r(p)$ of a location $p = (i,j)$ is defined to be $j+i$.
  \item The \emph{horizonal coordinate} $x(p)$ of a location $p = (i,j)$ is defined to be the midpoint $\frac{j_l(p) + j_r(p)}{2} = j + \frac{i}{2}$ of the left and right positions (this can be a half-integer).
  \item If $p = (i,j)$ is a location and $h$ is a integer, we define $p+h$ to be the location $(i,j+h)$ if $j+h$ is a natural number, and undefined if this is not the case.  In particular, $p+h$ is always defined if $h$ is non-negative.
  \item If $p, p'$ are two locations at the same depth $i(p) = i(p')$, we define the \emph{horizontal distance} $d(p,p')$ between them to equal 
  $$|j_l(p)-j_l(p')| = |j_r(p) - j_r(p')| = |x(p)-x(p')|.$$
  \item By abuse of notation, we identify every natural number $n$ with the depth $0$ location $(0,n)$; note that this is consistent with our previous addition notation $p+h$, as well as our notion of horizontal (or $\ell^1$) distance.
\end{itemize}
See Figure \ref{fig:location}.

\begin{figure}
    \centering
    \includegraphics[width=0.5\linewidth]{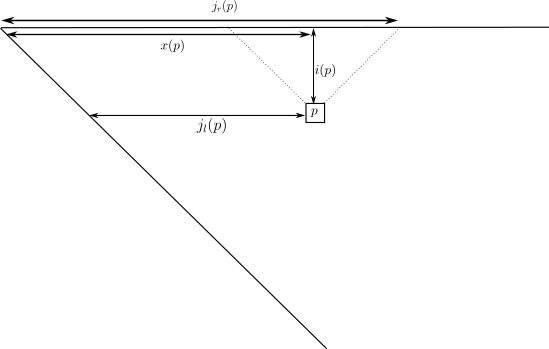}
    \caption{A location $p$ in the infinite triangle $\nabla^\infty$, together with its depth $i(p)$, left and right positions $j_l(p), j_r(p)$, and horizontal coordinate $x(p)$.}
    \label{fig:location}
\end{figure}

\begin{example} The location $(2,2)$ has depth $2$, left position $2$, right position $4$, and horizontal coordinate $3$.  The location $(2,2)+3$ is equal to $(2,5)$, and lies at a horizontal distance $3$ from $(2,2)$.
\end{example}

\begin{remark} It may be helpful for some readers to view the triangular array as a discrete $1+1$-dimensional ``spacetime'', with the depth coordinate $i(p)$ playing the role of time, and the horizontal coordinate $x(p)$ playing the role of space, though we have normalized our ``spacetime diagrams'' to have the arrow of time point downwards, rather than in the more customary upwards direction.  The equation of motion \eqref{ai-recur} corresponds to a finite speed $c$ of propagation set equal to $1/2$, and the left and right positions correspond to ``null coordinates''.
\end{remark}

We will need to consider various subsets of the triangular array $\nabla^\infty$.  We begin with the basic ``horizontal'' objects we shall need:

\begin{itemize}
  \item If $i \in \Z_{\geq 0}$, we define the \emph{$i^{\mathrm{th}}$ row} to be the set $\{ p \in \nabla^\infty: i(p) = i \}$. In particular, the top row of $\nabla^\infty$ is the zeroth row.
  \item A \emph{block} $I$ is a finite interval in a row, that is to say a set of the form
  $$I = p + [k] \coloneqq \{ p + h: h \in [k] \}$$
  for some location $p$ and natural number $k$, where we write $[k] \coloneqq \{0,1,\dots,k-1\}$.  We call $i(I) \coloneqq i(p)$, $j_l(I) \coloneqq j_l(p)$, $j_r(I) \coloneqq j_r(p) + k - 1$, and $\ell(I) \coloneqq k$  the \emph{depth}, \emph{left position}, \emph{right position}, and \emph{length} of the block respectively.  We define the \emph{central horizontal coordinate} $x(I)$ of $I$ to be the quantity $x(I) \coloneqq x(p) + \frac{k-1}{2}$; note that $I$ is symmetric around the vertical line $x = x(I)$.
  \item By abuse of notation, we identify every location $p \in \nabla^\infty$ with the length one block $p + [1]$.  Note that our notions of depth, positions,  and horizontal coordinate for locations are consistent with the corresponding notions for blocks.
  \item A \emph{subblock} of a block $I$ is a block $I'$ that is contained in $I$.
\end{itemize}

See Figure \ref{fig:block}.

\begin{figure}
    \centering
    \includegraphics[width=0.5\linewidth]{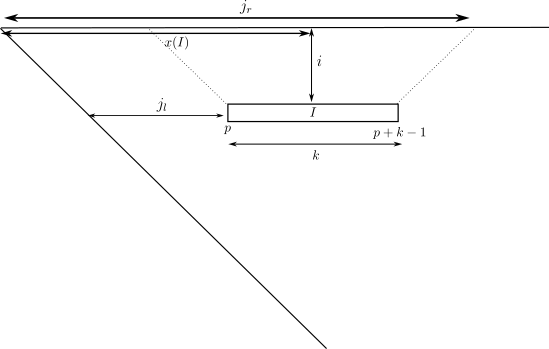}
    \caption{A block $I = p + [k]$ of depth $i$, left position $j_l$, right position $j_r$, length $k$, and central horizontal coordinate $x(I)$, with $p$ and $p+k-1$ as the two endpoints of the block.}
    \label{fig:block}
\end{figure}

\begin{example}  The block $(2,2)+[3]$ consists of the locations $(2,2)$, $(2,3)$, $(2,4)$; it lies in the second row (or equivalently, has depth $2$), has left position $2$, right position $6$, length $3$, and central horizontal coordinate $4$.  The block $(2,3)+[2]$ is a subblock of $(2,2)+[3]$ of depth $2$, left position $2$, right position $5$, length $2$, and central horizontal coordinate $3.5$.
\end{example}

We remark that blocks played a key role in the arguments in \cite{chase}, and several of our basic lemmas regarding blocks are inspired by analogous lemmas in \cite{chase}.

We introduce the following partial ordering on blocks.  If $I = (i,j) + [k]$ and $I' = (i',j') + [k']$ are blocks, we write $I \leq I'$ if $i' \geq i$, $j' = j$, and $i'+k' = i+k$; this is easily seen to be a partial ordering
(in fact it is a disjoint union of finite linear orders).  If $I < I'$, we say that $I'$ is a \emph{descendant} of $I$ (or that $I$ is an \emph{ancestor} of $I'$); in particular, if $I < I'$, then $I'$ is deeper and shorter than $I$.  Observe that a block has the same left position, right position, and central horizontal coordinate as any of its ancestors or descendants; only the depth and length change. 

A block $I$ will be maximal if and only if it has length $k=1$; otherwise it has a unique \emph{child}  $I^+ \coloneqq (i+1,j) + [k-1]$; iterating this, we see that the block $I$ has a unique \emph{ultimate descendant} $I^{+\infty} \coloneqq (i+k-1,j)$.  Similarly, a block $I$ will be minimal if and only if it has depth $i=1$, otherwise it has a unique \emph{parent} $I^- \coloneqq (i-1,j) + [k+1]$; iterating this, we see that the block $I$ has a unique \emph{ultimate ancestor} $I^{-\infty} \coloneqq j+[i+k] = (0,j) + [i+k]$. See Figure \ref{fig:order}.

\begin{example}  The block $I = (2,2)+[3]$ has descendants $(3,2) + [2]$ and $(4,2) = (4,2) + [1]$, with the former being the child $I^+$ and the latter the ultimate descendant $I^{+\infty}$.  Similarly, this block has ancestors $(1,2) + [4]$ and $2 + [5] = (0,2) + [5]$, with the former being the parent $I^-$ and the latter the ultimate ancestor $I^{-\infty}$.  In this case we have the ordering
  $$ I^{-\infty} < I^- < I < I^+ < I^{+\infty}.$$
\end{example}

\begin{figure}
    \centering
    \includegraphics[width=0.5\linewidth]{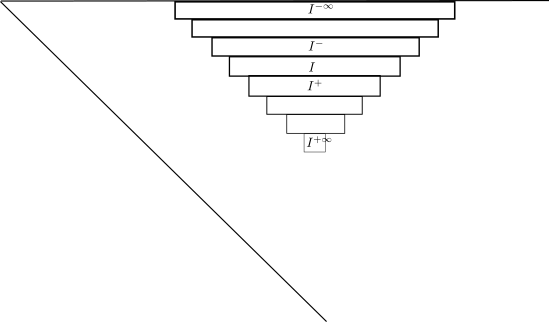}
    \caption{A block $I$ and all its comparable blocks, including its parent $I^-$, child $I^+$, ultimate ancestor $I^{-\infty}$, and ultimate descendant $I^{+\infty}$.  The union of all these blocks forms the backwards light cone $\nabla_{I^{+\infty}} = \nabla^{I^{-\infty}} = \Trap^{I^{-\infty}}_{I^{+\infty}}$. }
    \label{fig:order}
\end{figure}

Now we can define some two-dimensional regions in $\nabla^\infty$.

\begin{itemize}
  \item If $I_1, I_2$ are blocks with $I_1 \leq I_2$, we define the \emph{trapezoid} $\Trap^{I_1}_{I_2}$ to be the union of all the blocks $I$ with $I_1 \leq I \leq I_2$.  We refer to $I_1$ as the \emph{upper side} of the trapezoid, and $I_2$ as the \emph{lower side}.  
  \item If $I$ is a block, we define the \emph{triangle} $\nabla^I$ to be the trapezoid $\nabla^I \coloneqq \Trap^I_{I^{+\infty}}$ with upper side $I$, and lower side equal to the ultimate ancestor $I^{+\infty}$ of $I$.  The \emph{sidelength} $\ell(\nabla^I)$ of this triangle is defined to be the length $\ell(I)$ of $I$. We refer to the location $I^{+\infty}$ as the \emph{bottom vertex} of this triangle.
  \item If $p$ is a location, we define the \emph{backwards light cone}\footnote{One could also define a forward light cone, but we will not need this concept in this paper.} $\nabla_p$ to be the trapezoid $\nabla_p \coloneqq \Trap^{p^{-\infty}}_p = \nabla^{p^{-\infty}}$; this is a triangle of sidelength equal to $i(p)+1$.
\end{itemize}

\begin{example}  The trapezoid $\Trap^{(2,2)+[4]}_{(4,2)+[2]}$ consists of the locations $(2,2)$, $(2,3)$, $(2,4)$, $(2,5)$, $(3,2)$, $(3,3)$, $(3,4)$, $(4,2)$, and $(4,3)$.  The triangle $\nabla^{(3,2)+[3]}$ consists of the locations $(3,2)$, $(3,3)$, $(3,4)$, $(4,2)$, $(4,3)$, and $(5,2)$; it has sidelength $3$, and bottom vertex $(5,2)$.  The backwards light cone $\nabla_{(3,2)}$ consists of the locations $(1,2)$, $(1,3)$, $(1,4)$, $(2,2)$, $(2,3)$, $(3,2)$; it has sidelength $3$ and bottom vertex $(3,2)$.
\end{example}

\subsection{The difference process}

Let $\nabla_p$ be the backwards light cone for some location $p$.  We define \emph{initial data} for this light cone to be an assignment of a real number $a_j$ to each $j \in p^{-\infty}$.  Given such initial data, we may then assign a \emph{value} $a_q \in \R$ to each $q \in \nabla_p$ by solving the 
recurrence (or ``equation of motion'') \eqref{ai-recur} with $(i+1,j) \in \nabla_p \backslash p^{-\infty}$, starting from the initial condition \eqref{ai-initial} for $j \in p^{-\infty}$. creating a Gilbreath array on $\nabla_p$.
If $\Omega$ is a set of locations (such as a block, triangle, or trapezoid), we refer to the tuple $(a_p)_{p \in \Omega}$ as the \emph{values} of that set.  If $A$ is a set of reals, we say that a location $p$ is \emph{$A$-valued} if $a_p \in A$, and similarly call any set $\Omega$ of locations $A$-valued if all of its elements are $A$-valued.  If $d$ is a real, we abbreviate ``$\{d\}$-valued'' as ``$d$-valued''. We say that a set $\Omega$ \emph{attains} a value $d \in \R$ if at least one location of $\Omega$ is $d$-valued.

\begin{example}  Set $p = (3,1)$.  If we set $a_1 = 1$, $a_2 = 2$, $a_3 = 2$, and $a_4 = 0$ as our initial data, then the resulting Gilbreath array on $\nabla_p$ is
  $$
  \begin{array}{ccccccccc}
    1 & & 2 & & 2 & & 0  \\
    & 1 & & 0 & & 2 \\
    & & 1 & & 2  \\
    & & & 1 
  \end{array}
  $$
Thus, for instance, that the backwards light cone $\nabla_{(2,2)}$ is $\{0,2\}$-valued, and attains both of the values $0,2$; but the locations $1=(0,1)$, $(1,1)$, $(2,1)$, and $(3,1)$ are not $\{0,2\}$-valued.
\end{example}

Through the equations of motion \eqref{ai-recur}, the values of a block are related to those of their parents and children.  For future use, we record some specific relations of this form (cf. \cite[Lemmas 3.2, 3.3]{chase}):

\begin{lemma}[Inheritance]\label{succ-lemma}  Suppose that a backwards light cone $\nabla_p$ is given some initial data $(a_j)_{j \in p^{-\infty}}$.  Let $d \geq 0$, and let $I$ be a block in $\nabla_p$.
  \begin{itemize}
    \item[(i)] If $I$ is $\Z_{\geq 0}$-valued, then so is $I'$ for any $I' \geq I$.
    \item[(ii)] If $I$ is $\{0,1,\dots,d\}$-valued, then so is $I'$ for any $I' \geq I$.
    \item[(iii)] If $I$ is $\{0,d\}$-valued, then so is $I'$ for any $I' \geq I$.
  \end{itemize}
\end{lemma}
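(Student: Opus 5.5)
The plan is to reduce all three claims to a single one-step statement and then induct along the partial order. Every descendant $I' \geq I$ is reached from $I$ by finitely many applications of the child operation $I \mapsto I^+$. Indeed, if $I = (i,j)+[k]$ and $I' = (i',j)+[k']$ with $i'+k' = i+k$, then $I' = I^{+(i'-i)}$. So it suffices to show, for each of the three sets $A = \Z_{\geq 0}$, $A = \{0,1,\dots,d\}$ and $A = \{0,d\}$, the following: if a block $I$ of length at least $2$ is $A$-valued, then so is its child $I^+$. The general case then follows by induction on $i'-i$, and the case $I' = I$ is trivial.

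For the one-step claim, note that each location of $I^+ = (i+1,j)+[k-1]$ has the form $(i+1,j+h)$ with $0 \leq h \leq k-2$. By the equation of motion \eqref{ai-recur}, its value is $|a_{(i,j+h)} - a_{(i,j+h+1)}|$, and both locations $(i,j+h)$ and $(i,j+h+1)$ lie in $I$. It therefore suffices to check that each set $A$ is closed under the operation $(x,y) \mapsto |x-y|$.
\begin{itemize}
\item[(i)] For $x,y \in \Z_{\geq 0}$, the quantity $|x-y|$ is a non-negative integer.
\item[(ii)] For $0 \leq x,y \leq d$ integers, we have $0 \leq |x-y| \leq \max(x,y) \leq d$.
\item[(iii)] For $x,y \in \{0,d\}$, we have $|x-y| \in \{0,d\}$: the difference is $0$ when $x = y$ and $d$ otherwise.
\end{itemize}
One should also confirm that $I^+$ lies in $\nabla_p$ whenever $I$ does, so that the values are defined by the recurrence. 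This holds because $\nabla_p$ is a triangle, hence a union of a chain of blocks closed under taking children within it.

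There is no real obstacle here. The only point needing care is the bookkeeping that the descendants of $I$ are exactly its iterated children, and that they remain inside the light cone where the values are defined. The closure computations themselves are immediate.
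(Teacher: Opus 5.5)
Your proposal is correct and matches the paper's proof, which simply observes that $\Z_{\geq 0}$, $\{0,1,\dots,d\}$ and $\{0,d\}$ are each closed under $(x,y) \mapsto |x-y|$. Your reduction of $I' \geq I$ to iterated children $I^{+(i'-i)}$, and your check that these stay inside $\nabla_p$, spell out the routine induction that the paper leaves implicit.
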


\begin{proof} This is immediate from the fact that the sets $\Z_{\geq 0}$, $\{0,1,\dots,d\}$ and $\{0,d\}$ are each closed under the operation $x,y \mapsto |x-y|$.
\end{proof}

\begin{lemma}[Parentage]\label{pred-lemma}  Suppose that a backwards light cone $\nabla_p$ is given some $\Z_{\geq 0}$-valued initial data $(a_j)_{j \in p^{-\infty}}$. Let $d \geq 1$, and let $I$ be a $\{0,d\}$-valued block in $\nabla_p$ of depth at least $1$ that attains the value $d$ at least once.  Then at least one of the following statements hold.
  \begin{itemize}
    \item[(i)] $I^-$ is $\{0,d\}$-valued and attains both values $0, d$.
    \item[(ii)]  $I^-$ is $\{a, a+d\}$-valued for some integer $0 < a < d$, and attains both values $a, a+d$.
    \item[(iii)] $I^-$ attains at least one integer value that is greater or equal to $2d$.
  \end{itemize}
\end{lemma}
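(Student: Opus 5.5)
We argue by contrapositive: assume (iii) fails, so every value of $I^-$ is at most $2d-1$, and show that (i) or (ii) holds. Write $I = (i,j)+[k]$, so $I^- = (i-1,j)+[k+1]$ has values $b_0,\dots,b_k \in \Z_{\geq 0}$ (nonnegativity by \Cref{succ-lemma}(i)), and $|b_h - b_{h+1}| = a_{(i,j+h)} \in \{0,d\}$ for $0 \leq h < k$. Consequently consecutive entries of $I^-$ are either equal or differ by exactly $d$. In particular, every $b_h$ lies in a single residue class $a \bmod d$, where we choose the representative $0 \leq a < d$.

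Since all $b_h$ lie in $[0,2d-1]$ and are congruent to $a$ modulo $d$, each $b_h$ lies in $\{a, a+d\}$; note that $a+2d \geq 2d$ is excluded. Because $I$ attains the value $d$, some consecutive pair of entries of $I^-$ differs by $d$. Hence $I^-$ attains both $a$ and $a+d$.

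We now split on $a$. If $a = 0$, then $I^-$ is $\{0,d\}$-valued and attains both values, which is (i). If $0 < a < d$, then $I^-$ is $\{a,a+d\}$-valued and attains both values, which is (ii).

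There is no real obstacle here. The only point needing care is the observation that the bound $b_h \leq 2d-1$ confines each residue class modulo $d$ to at most two admissible values.
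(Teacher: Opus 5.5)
Your proof is correct and follows essentially the same route as the paper. Both arguments dispose of (iii) to confine $I^-$ to $\{0,\dots,2d-1\}$, place $I^-$ in a single coset $a+d\Z$ and hence in $\{a,a+d\}$, and use that $I$ attains $d$ to see that both values occur; the paper's extra appeal to \Cref{succ-lemma}(ii) is not needed, as your argument shows.
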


\begin{proof} From \Cref{succ-lemma}(i), $I^-$ is $\Z_{\geq 0}$-valued.  From the contrapositive of \Cref{succ-lemma}(ii) we see that $I^-$ must attain an integer value greater than or equal to $d$.  We may assume without loss of generality that $I^-$ is $\{0,\dots,2d-1\}$-valued, since otherwise (iii) holds.  Since $I$ is $\{0,d\}$-valued, it is $d\Z$-valued, hence $I^-$ takes values in a coset $a+d\Z$ of $d\Z$ for some $0 \leq a < d$.  If $a=0$, we are in case (i), otherwise we are in case (ii).  (Note that if $I^-$ was $\{a,a+d\}$-valued but did not attain both the values $a,a+d$, then $I$ would only attain the value $0$, a contradiction.)
\end{proof}

\begin{example}  Take $d=10$. In the triangle
  $$
  \begin{array}{ccccccccc}
    17 & & 7 & & 17 & & 17  \\
    & 10 & & 10 & & 0 \\
    & & 0 & & 10 \\
    & & &  10
  \end{array}
  $$
we see that conclusion (i) of \Cref{pred-lemma} holds if we take $I$ to be the third or fourth row of this triangle, but conclusion (ii) holds if we take $I$ to be the second row.  In the trapezoid
$$
\begin{array}{ccccccccc}
  20 & & 10 & & 0 & & 0  \\
  & 10 & & 10 & & 0 \\
\end{array}
$$
we see that conclusion (iii) holds instead if we take $I$ to be the bottom row.
\end{example}

By iterating \eqref{ai-recur} we see that the value $a_{(i,j)}$ at a given location $(i,j)$ is some piecewise linear function of $a_j,\dots,a_{j+i-1}$.  This functional relationship becomes rather complicated as $i$ increases.  However, two useful facts about this relationship can be easily obtained.  The first, already observed in \cite[Lemma 3.5]{chase}, is that the \emph{parity} of the value $a_{(i,j)}$ is easy to compute:

\begin{lemma}[Parity formula]\label{parity-lemma}   Suppose that a backwards light cone $\nabla_p$ is given some $\Z$-valued initial data $(a_j)_{j \in p^{-\infty}}$.  Then for any location $(i,j)$ in $\nabla_p$\,, one has
\begin{equation}\label{aij}
 a_{(i,j)} = \sum_{k=0}^{i} \binom{i}{k} a_{j+k} \mod 2.
\end{equation}
In particular, if we fix the values of $a_j, \dots, a_{j+i-1}$ (resp. $a_{j+1},\dots,a_{j+i}$), then the parity of $a_{(i,j)}$ is determined by the parity of $a_{j+i}$ (resp. $a_j$), and vice versa.
\end{lemma}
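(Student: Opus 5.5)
The plan is to argue by induction on the depth $i$, working modulo $2$ throughout. The key observation is that on integers, $|x-y| \equiv x-y \equiv x+y \pmod 2$. Reducing the recurrence \eqref{ai-recur} modulo $2$ therefore gives the linear recurrence
$$a_{(i+1,j)} \equiv a_{(i,j)} + a_{(i,j+1)} \mod 2,$$
valid for every $(i+1,j)$ in $\nabla_p$ with $i+1 \geq 1$. Integrality of the initial data is needed here. The values stay integers by \eqref{ai-recur}, since integers are closed under $x,y\mapsto |x-y|$, which is the same reasoning as in \Cref{succ-lemma}.

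For the induction, the base case $i=0$ is just \eqref{ai-initial}, since the only term in the sum is $\binom{0}{0}a_j$. Suppose \eqref{aij} holds at depth $i$ for both $(i,j)$ and $(i,j+1)$. Note that both locations lie in $\nabla_p$ whenever $(i+1,j)$ does, because $\nabla_p$ is a backwards light cone. Substituting into the reduced recurrence gives
$$a_{(i+1,j)} \equiv \sum_{k=0}^{i}\binom{i}{k}a_{j+k} + \sum_{k=0}^{i}\binom{i}{k}a_{j+k+1} \mod 2.$$
Reindexing the second sum by $k \mapsto k-1$ and applying Pascal's rule $\binom{i}{k}+\binom{i}{k-1}=\binom{i+1}{k}$ (with the usual convention that out-of-range binomials vanish) yields \eqref{aij} at depth $i+1$.

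For the ``in particular'' clause, I would read off the extreme coefficients $\binom{i}{i}=\binom{i}{0}=1$. The formula then reads
$$a_{(i,j)} \equiv a_{j+i} + \sum_{k=0}^{i-1}\binom{i}{k}a_{j+k} \mod 2.$$
Once $a_j,\dots,a_{j+i-1}$ are fixed, the sum is a fixed parity $\epsilon$. Hence $a_{(i,j)} \equiv a_{j+i}+\epsilon$, a bijection between the parities of $a_{(i,j)}$ and $a_{j+i}$, so each parity determines the other. The symmetric statement, with $a_{j+1},\dots,a_{j+i}$ fixed, uses the $k=0$ coefficient in the same way.

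There is no real obstacle here. The only points needing care are to state the mod $2$ reduction of the absolute value explicitly, and to make sure every location invoked in the induction lies inside $\nabla_p$.
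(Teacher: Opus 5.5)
Your proposal is correct and takes the same route as the paper: reduce \eqref{ai-recur} modulo $2$ to the linear recurrence $a_{(i,j)} \equiv a_{(i-1,j)} + a_{(i-1,j+1)} \pmod 2$, then induct on the depth $i$ using Pascal's identity. Your treatment of the ``in particular'' clause, via the extreme coefficients $\binom{i}{0}=\binom{i}{i}=1$, fills in a step the paper leaves implicit.
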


\begin{proof}  From \eqref{ai-recur} we have
$$ a_{(i,j)} = a_{(i-1,j)} + a_{(i-1,j+1)} \mod 2$$
for any $(i,j) \in \nabla_p \backslash p^{-\infty}$.  The claim \eqref{aij} then follows by an induction on $i$ and the Pascal identity.
\end{proof}

One can of course compute the parity of $\binom{i}{k}$ using Lucas' theorem.  As it turns out, we will not use \Cref{parity-lemma} directly in our arguments; it is useful for excluding long $0$-valued blocks, or $\{0,d\}$-valued blocks for even $d$, but does not easily reduce the likelihood of long $\{0,d\}$-valued blocks for occurring for odd $d$.  For all $d$, we will use the following variant of \Cref{parity-lemma}. 

\begin{lemma}[Separation]\label{sep-lemma}  Suppose that a backwards light cone $\nabla_p$ is given some $\Z$-valued initial data $(a_j)_{j \in p^{-\infty}}$.  Let $(i,j) \in \nabla_p$ be a location, and let $D \subset \Z$ be a $2$-separated set. Fix the values of $a_j, \dots, a_{j+i-1}$ (resp. $a_{j+1},\dots,a_{j+i}$).  Then the set of possible values of $a_{j+i}$ (resp. $a_j$) that would make the location $(i,j)$ $D$-valued is also $2$-separated.
\end{lemma}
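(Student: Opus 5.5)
Fix $a_j,\dots,a_{j+i-1}$ and regard $a_{(i,j)}$ as a function $F(x)$ of the remaining free variable $x = a_{j+i}$. By symmetry, the case where $a_{j+1},\dots,a_{j+i}$ are fixed and $a_j$ is free is identical; it follows by reflecting the array left to right, which preserves \eqref{ai-recur}. The claim is that $F^{-1}(D)$ is $2$-separated whenever $D$ is. I would deduce this from a single structural fact: the function $F$ satisfies
$$|F(x+1) - F(x)| = 1 \quad \text{for all } x \in \Z.$$
Granting this, suppose $x$ and $x+1$ both lay in $F^{-1}(D)$. Then $F(x)$ and $F(x+1)$ would be elements of $D$ at distance exactly $1$. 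This contradicts $2$-separation, since distinct elements of $D$ are at distance at least $2$ and equal values are excluded because they differ by $1$.

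To prove the structural fact, I would induct along the right edge of the light cone. Concretely, I would show by induction on $k = 0,1,\dots,i$ that the location $(k, j+i-k)$, which lies on the right diagonal ending at $(i,j)$, has value $F_k(x)$ with $|F_k(x+1)-F_k(x)|=1$ for all integers $x$.

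The base case is $F_0(x) = x$. For the inductive step I would use
$$F_{k+1}(x) = |c_k - F_k(x)|,$$
where $c_k = a_{(k, j+i-k-1)}$ is the left neighbour. This neighbour lies in the backwards light cone of $(k,j+i-k-1)$, whose initial data is contained in $a_j,\dots,a_{j+i-1}$, so $c_k$ does not depend on $x$ and is a fixed integer. It then suffices to check that if $g:\Z\to\Z$ changes by exactly $\pm1$ at each step, then so does $|c-g|$ for any integer $c$. This holds because $c - g(x)$ and $c - g(x+1)$ are integers differing by $1$, and for consecutive integers $u, u+1$ we always have $\bigl||u+1|-|u|\bigr| = 1$: if both are $\geq 0$ or both are $\leq 0$ this is clear, and no integer lies strictly between $u$ and $u+1$, so there is no other case. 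Note that the argument only needs integrality, which is where the hypothesis of $\Z$-valued initial data is used.

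The main point requiring care is the bookkeeping that the left neighbours $c_k$ depend only on the fixed entries, and in particular not on $x$. This follows from the light-cone structure, since $(k,j+i-k-1)^{-\infty}$ is $(0,j+i-k-1)+[k+1]$, which is contained in $\{j,\dots,j+i-1\}$. Everything else is routine. As a consistency check, the structural fact recovers the parity statement of \Cref{parity-lemma}, since $F$ changes parity at each step in $x$.
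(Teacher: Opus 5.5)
Your proposal is correct and is essentially the paper's argument: the paper also pushes a pair of consecutive values of $a_{j+i}$ down the right diagonal $(1,j+i-1),(2,j+i-2),\dots,(i,j)$, using at each step that the other input to the absolute difference is fixed, so consecutive values stay consecutive. Your version spells out two facts the paper leaves implicit: the left neighbours $a_{(k,j+i-k-1)}$ depend only on $a_j,\dots,a_{j+i-1}$, and $\bigl||u+1|-|u|\bigr|=1$ for every integer $u$.
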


In this paper we will only apply this lemma with $D = \{0,d\}$ for various integers $d \geq 2$.

\begin{proof} We just establish the claim for $a_{j+i}$ (fixing $a_j, \dots, a_{j+i-1}$), as the claim for $a_j$ (fixing $a_{j+1},\dots,a_{j+i}$) is similar.  Consider two possible values $m,m+1$ for $a_{j+i}$.  From \eqref{ai-recur} we have
$$ a_{(1, j+i-1)} = |a_{j+i} - a_{j+i-1}|$$
so two consecutive values for $a_{j+i}$ would lead to two consecutive values for $a_{(1,j+i-1)}$, since $a_{j+i-1}$ is fixed.  A second application of \eqref{ai-recur} gives
$$ a_{(2, j+i-2)} = |a_{(1,j+i-1)} - a_{(1,j+i-2)}|$$
and again two consecutive values for $a_{(1,j+i-1)}$ would lead to two consecutive values for $a_{(2,j+i-2)}$, since $a_{(1,j+i-2)}$ is fixed.  Iterating this, we see that two consecutive values for $a_{j+i}$ would ultimately lead to two consecutive values for $a_{(i,j)}$; but the $2$-separated set $D$ does not contain two consecutive integers, and so the claim follows.
\end{proof}

\subsection{Towers}

By iterating \Cref{pred-lemma}, we can show that any location that attains a large value will form the base of a ``tower'' of triangles above that location.  To make this precise, we introduce the following definition.

\begin{definition}[Tower]\label{tower-def} Let $\nabla^{I_*}$ be a triangle and $D\ge 1$. A \emph{tower of complexity $D$ for $\nabla^{I_*}$} is an increasing sequence  
\begin{equation}\label{dk}
   2 \leq d_1 < d_2 < \dots < d_k \leq D
\end{equation}
of integers $d_1,\dots,d_k$ for some \emph{height} $k \geq 1$, together with triangles $\nabla^{I_1}, \dots,\nabla^{I_k}$ in $\nabla^{I_*}$, obeying the following axioms:
\begin{itemize}
\item[(i)] The bottom vertex $I_1^{+\infty}$ of $\nabla^{I_1}$ equals the bottom vertex $I_*^{+\infty}$ of $\nabla^{I_*}$.
\item[(ii)] For $2 \leq j \leq k$, the bottom vertex $I_j^{+\infty}$ of $\nabla^{I_j}$ lies in the parent $I_{j-1}^-$ of $I_{j-1}$; in particular, $I_{j-1}$ has depth at least one.  (This axiom is vacuously true when $k=1$.)
\item[(iii)] $I_k$ is a subblock of $I_*$.
\end{itemize}
The tower is \emph{attained} (for a given choice of initial data $a_1,\dots,a_n$) if the triangle $\nabla^{I_j}$ is $\{0,d_j\}$-valued for each $j=1,\dots,k$; see Figure \ref{fig:tower}.
\end{definition}

We then have

\begin{lemma}[Attained tower]\label{attain}  Suppose that $\nabla^{I_*}$ is a $\{0,\dots,D\}$-valued triangle with $a_{I_*^{+\infty}} > 1$.  Then at least one tower of complexity $D$ for $\nabla^{I_*}$ is attained.
\end{lemma}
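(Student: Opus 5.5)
The plan is to build the tower greedily, starting at the bottom vertex and moving upward, with \Cref{pred-lemma} supplying each new step. Write $p_* := I_*^{+\infty}$. Set $d_1 := a_{p_*}$, which satisfies $2 \le d_1 \le D$ by hypothesis. Then take $I_1$ to be the largest block $I$ with $I \le p_*$, $I \subset I_*$ (equivalently $\nabla^I \subset \nabla^{I_*}$), and $\nabla^I$ $\{0,d_1\}$-valued. Such a block exists, because the single location $p_*$ qualifies. Since the blocks comparable to $p_*$ form a finite chain, a maximal one exists. Note also that by \Cref{succ-lemma}(iii), $\nabla^I$ is $\{0,d_1\}$-valued exactly when $I$ is, so the condition only needs to be checked on the upper side.

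The inductive step runs as follows. Suppose $I_{j-1}$ has been chosen, with $\nabla^{I_{j-1}}$ $\{0,d_{j-1}\}$-valued and $I_{j-1}$ maximal in this sense. If $I_{j-1}$ is a subblock of $I_*$ of the same depth as $I_*$ (in particular if $I_{j-1} = I_*$), we stop with $k = j-1$, and axiom (iii) holds. Otherwise $I_{j-1}^-$ still lies in $\nabla^{I_*}$, so $I_{j-1}$ has depth at least one. Now $I_{j-1}$ attains $d_{j-1}$: for $j-1=1$ this is because its ultimate descendant $p_*$ has value $d_1$, and in general because a $\{0\}$-valued block would force its ultimate descendant to be $0$, whereas we choose that descendant to have value $d_{j-1}$. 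So \Cref{pred-lemma} applies to $I = I_{j-1}$ with $d = d_{j-1}$.

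Case (i) of \Cref{pred-lemma} contradicts maximality of $I_{j-1}$. In case (ii), $I_{j-1}^-$ attains the value $a + d_{j-1}$ with $0<a<d_{j-1}$. In case (iii), it attains some value $\ge 2d_{j-1}$. In either case choose a location $q \in I_{j-1}^-$ with $a_q > d_{j-1}$, and set $d_j := a_q$. Then $d_{j-1} < d_j \le D$, because $\nabla^{I_*}$ is $\{0,\dots,D\}$-valued, and $d_j \ge 2$. Define $I_j$ as the maximal block $I \le q$ with $I$ inside $\nabla^{I_*}$ and $\nabla^I$ $\{0,d_j\}$-valued. Its bottom vertex is $q \in I_{j-1}^-$, which gives axiom (ii), and axiom (i) holds by the construction of $I_1$.

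Termination follows because the $d_j$ form a strictly increasing sequence of integers bounded by $D$. Hence the process must stop, and by the argument above it can only stop when $I_k$ is a subblock of $I_*$, giving a tower of complexity $D$ that is attained.

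The main obstacle is the containment bookkeeping. Specifically, I need to check that when $I_{j-1}$ is not at the depth of $I_*$, its parent $I_{j-1}^-$ still lies inside $\nabla^{I_*}$, and that the stopping condition is exactly axiom (iii). To handle this, I would choose each $I_j$ as the maximal ancestor of $q$ whose triangle is $\{0,d_j\}$-valued and contained in $\nabla^{I_*}$. Then the only way maximality can be blocked, other than by \Cref{pred-lemma} failing to give case (i), is that $I_j^-$ pokes out of $\nabla^{I_*}$. Since $I_j^-$ and $I_j$ share the same left and right positions and $\nabla^{I_j}\subset\nabla^{I_*}$, this can happen only if $I_j$ already sits at the depth of $I_*$, and in that case $I_j \subset I_*$.
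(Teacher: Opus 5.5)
Your proof is correct and is essentially the paper's argument. The paper also sets $d_1 = a_{I_*^{+\infty}}$, climbs upward via \Cref{pred-lemma} while case (i) holds, jumps to a strictly larger value inside $I_{j-1}^-$ otherwise, and terminates because the $d_j$ increase and are bounded by $D$; your ``longest admissible ancestor'' formulation just makes that iteration explicit. (Two minor slips: ``$I \subset I_*$'' should read $I \subset \nabla^{I_*}$, and by ``largest/maximal'' you mean longest, i.e.\ minimal in the paper's order $\leq$, in which the maximal blocks are the single locations.)
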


\begin{proof}  Set $p_1 \coloneqq I_*^{-\infty}$ and $d_1 \coloneqq a_{p_1}$, thus $2 \leq d_1$ by hypothesis.  Applying \Cref{pred-lemma} iteratively, we can express $p_1$ as the bottom vertex $I_1^{+\infty}$ of a $\{0,d_1\}$-valued triangle $\nabla^{I_1}$ in $\nabla^{I_*}$, such that either $I_1$ is a subblock of $I_*$, or else the parent $I_1^-$ still lies in $\nabla^{I_*}$ and contains a location $p_2$ attaining some value $d_1 < d_2 \leq D$.  In the former case, we are done (with $k=1$).  Otherwise, we apply \Cref{pred-lemma} iteratively again, to express $p_2$ as the bottom vertex $I_2^{+\infty}$ of a $\{0,d_2\}$-valued triangle $\nabla^{I_2}$ in $\nabla^{I_*}$, such that either $I_2$ is a subblock of $I_*$, or else the parent $I_2^-$ still lies in $\nabla^{I_*}$ and contains a location $p_3$ attaining some value $d_2 < d_3 \leq D$.  In the former case, we are now done (with $k=2$); otherwise we continue this process in the obvious fashion.  This process must terminate after at most $D$ steps, and the claim follows.
\end{proof}

\begin{figure}
    \centering
    \includegraphics[width=0.5\linewidth]{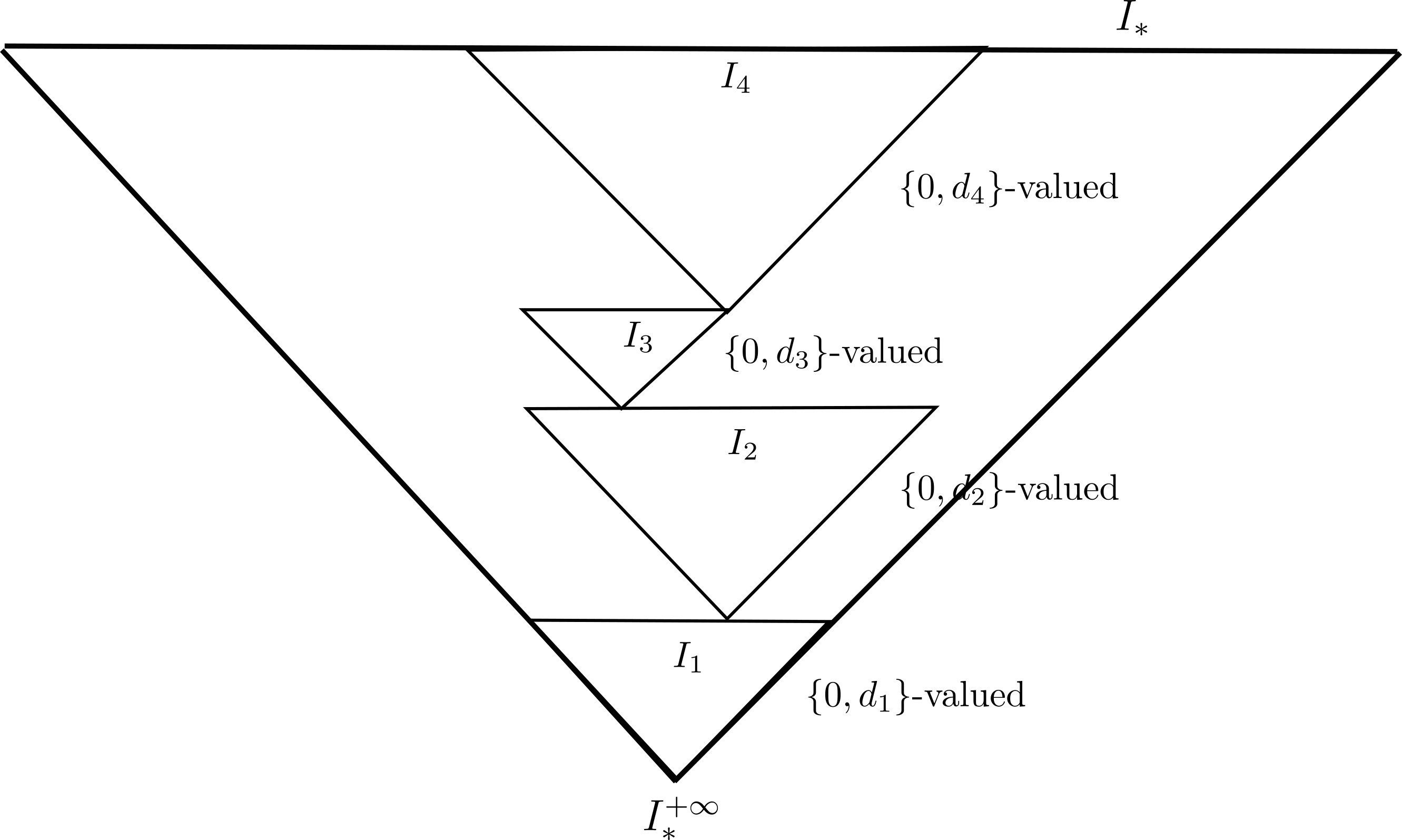}
    \caption{A schematic depiction of an attained tower of height $4$ in a triangle $\nabla^{I_*}$. }
    \label{fig:tower}
\end{figure}

Each tower has a ``large shadow'' in the following sense.

\begin{lemma}[Towers have large shadow]\label{tower-large}  Let $T$ be a tower in a triangle $\nabla^{I_*}$, and let $p$ be an element of the top row $I_*$ of the triangle.  Then there exists an element $p'$ of one of the triangles in the tower such that $p'$ either has the same left position as $p$ or the same right position as $p$, thus $j_l(p') = j_l(p)$ or $j_r(p') = j_r(p)$.  
\end{lemma}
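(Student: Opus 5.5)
We argue by a continuity (intermediate value) argument along the tower, tracking the interval of left and right positions covered by each triangle. For a triangle $\nabla^I$ with $I$ of depth $i$, the set of left positions of its locations is exactly $[j_l(I), j_l(I)+\ell(I)-1]$. The set of right positions is $[j_r(I)-\ell(I)+1, j_r(I)]$. Both are read off the top row $I$; deeper rows of the triangle only shrink these ranges. Write $p=(i_*,j)$ with $i_*=i(I_*)$, so $j_l(p)=j$ and $j_r(p)=j+i_*$. Since $p\in I_*$, we have $j_l(I_*)\le j$ and $j+i_*\le j_r(I_*)$.

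The key bookkeeping is as follows. Let $q_t=I_t^{+\infty}$ be the bottom vertex of the $t$-th triangle. Because $\nabla^{I_t}$ is the backwards light cone of $q_t$ truncated at depth $i(I_t)$, we have $j_l(I_t)=j_l(q_t)$ and $j_r(I_t)=j_r(q_t)$. So the triangle $\nabla^{I_t}$ contains a location with left position $j$ iff $j_l(q_t)\le j\le j_l(q_t)+\ell(I_t)-1$. Similarly, it contains one with right position $j+i_*$ iff $j_r(q_t)-\ell(I_t)+1\le j+i_*\le j_r(q_t)$.

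Suppose for contradiction that no triangle in the tower contains such a $p'$. We track, for each $t$, whether $q_t$ lies strictly to the left of $p$'s left line ($j_l(q_t)>j$ fails in one way) or strictly to the right of its right line.

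More precisely, define the \emph{status} of $t$ as follows. It is L if $j_r(q_t)<j+i_*$, i.e.\ the whole triangle lies left of the right null line through $p$. It is R if $j_l(q_t)>j$. Since $q_1=I_*^{+\infty}$ has $j_l(q_1)=j_l(I_*)\le j$ and $j_r(q_1)=j_r(I_*)\ge j+i_*$, triangle $1$ straddles both lines. The right-position range of $\nabla^{I_1}$ is $[j_r(I_*)-\ell(I_1)+1,\,j_r(I_*)]$, which must miss $j+i_*$; hence $\ell(I_1)$ is small. The left-position range must also miss $j$.

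The plan is then an induction. Let $I_t^-$ be the parent, of length $\ell(I_t)+1$, with left positions $[j_l(q_t), j_l(q_t)+\ell(I_t)]$ and right positions $[j_r(q_t)-\ell(I_t)-1+\dots]$. The next vertex $q_{t+1}$ lies in $I_t^-$, so its left position is at most one beyond the left-position range of $\nabla^{I_t}$, and similarly on the right. Consequently the left-position range of $\nabla^{I_{t+1}}$ (which starts at $j_l(q_{t+1})$) together with that of $\nabla^{I_t}$ forms, roughly, a contiguous interval. The same holds for right positions, taken from the other end.

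Iterating, the union over $s\le t$ of the left-position ranges, and likewise the right-position ranges, grows contiguously. Finally $I_k\subseteq I_*$ has depth $i_*$, so $\nabla^{I_k}$ contains the full block $I_k$ in row $i_*$. The tower's union therefore ``connects'' the bottom vertex $q_1$, whose backwards cone contains $p$, to the top row. A discrete intermediate value argument then shows that either the union of left ranges passes through $j$ or the union of right ranges passes through $j+i_*$. Concretely, I would show by induction the invariant: either some triangle so far hits, or the cumulative left range is an interval $[\alpha_t,\beta_t]$ with $\beta_t<j$, and the cumulative right range is an interval $[\gamma_t,\delta_t]$ with $\gamma_t>j+i_*$. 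These cannot both hold once $i(I_t)$ reaches $i_*$. At that depth a location has $j_r-j_l=i_*$, so $j_l<j$ forces $j_r<j+i_*$, which is a contradiction.

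The main obstacle is the precise off-by-one bookkeeping in the inductive step: verifying that $q_{t+1}\in I_t^-$ keeps the cumulative ranges contiguous. Here the parent extends exactly one position beyond $I_t$ on each side in the relevant null coordinate. We also need to confirm that the triangles' position ranges shift in a way that preserves $\beta_t<j$ and $\gamma_t>j+i_*$. I would first handle this carefully using the identities $j_l(I_t^-)=j_l(I_t)$ and $j_r(I_t^-)=j_r(I_t)$ together with $\ell(I_t^-)=\ell(I_t)+1$.
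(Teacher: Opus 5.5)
Your argument is correct and is essentially the paper's proof written out in coordinates. The paper observes that the tower contains a path of northwest/northeast steps from $I_*^{+\infty}$ to $I_*$ (via $I_{t+1}^{+\infty}\in I_t^-$ and $I_k\subseteq I_*$) that must cross one of the two null rays through $p$; your invariant (left ranges stay $<j_l(p)$, right ranges stay $>j_r(p)$, each new bottom vertex lies at most one step beyond the previous range, and $j_r-j_l=i(I_*)$ on the top row gives the contradiction) is exactly the discrete intermediate-value bookkeeping behind that crossing, though the ``status'' labels are unnecessary and the right positions of $I_t^-$ should read $[j_r(q_t)-\ell(I_t),\,j_r(q_t)]$.
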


Informally, this lemma, when combined with the pigeonhole principle, asserts that at least one of the ``left shadow'' $j_l(T)$ or the ``right shadow'' $j_r(T)$ will be large.

\begin{proof}  Suppose for contradiction that no such $p'$ exists.  Then the set $P$ must avoid both the ``southeast ray'' $\{ p' \in \nabla^{I_*} : j_l(p') = j_l(p) \}$ and the ``southwest ray'' $\{ p' \in \nabla^{I_*} : j_r(p') = j_r(p)\nabla\}$ emanating from $p$.  The union of these two rays disconnects the bottom vertex $I_*^{+\infty}$ of the triangle $\nabla^{I_*}$ from the top row $I_*$.  On the other hand, from the definition of a tower, there exists a path from $I_*^{+\infty}$ to $I_*$ within the tower that consists entirely of steps in the northwest direction $(-1,0)$ and northeast direction $(-1,1)$ only; see Figure \ref{fig:split}.  This gives the desired contradiction.
\end{proof}

\begin{figure}
    \centering
    \includegraphics[width=0.5\linewidth]{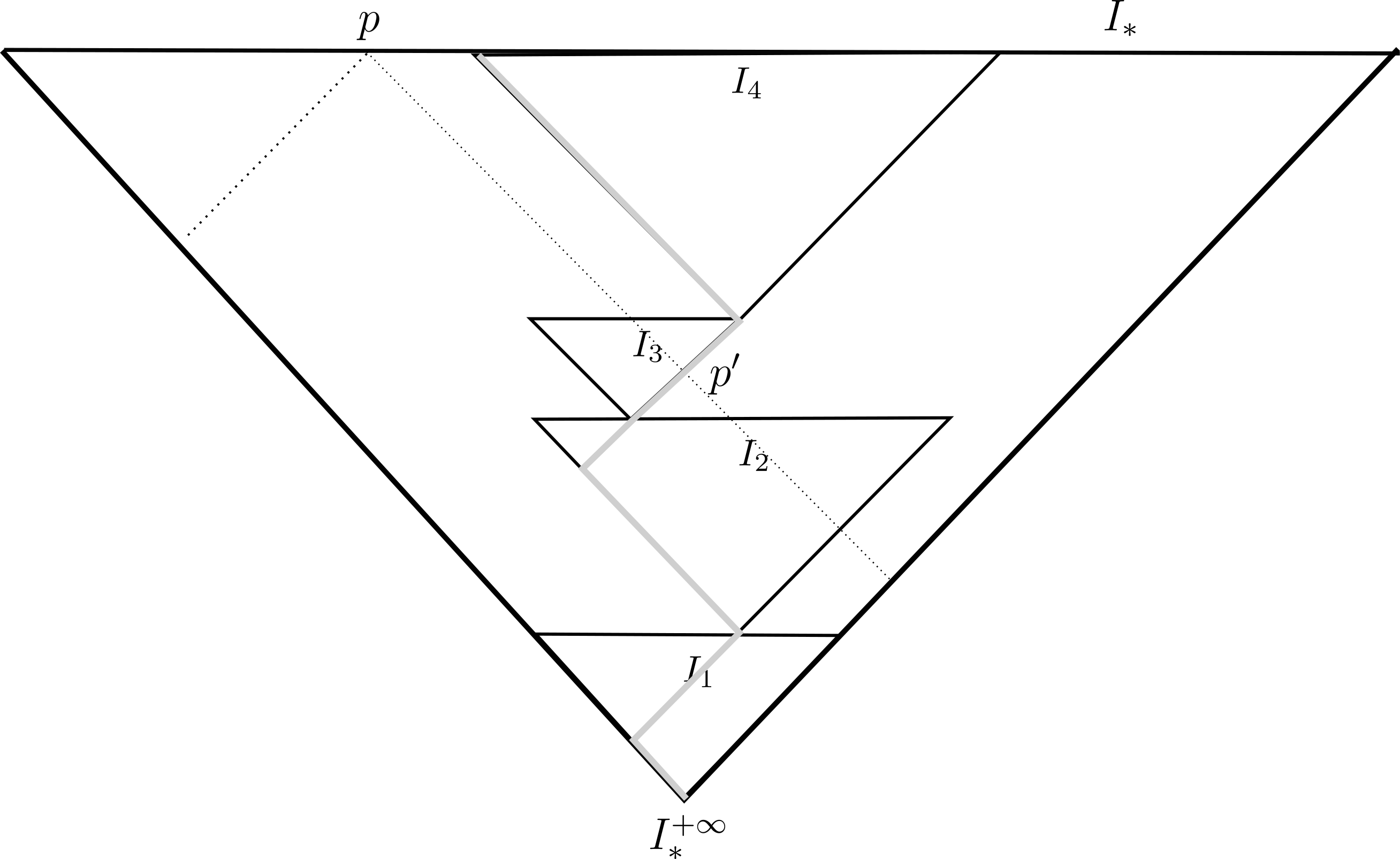}
    \caption{The union of the southeast and southwest rays from a point $p$ (depicted here as dotted lines) in $I_*$ must intersect the tower at some location $p'$, as can be seen by considering a path within that tower from the bottom vertex $I_*^{+\infty}$ to the top row $I_*$ (such as the one drawn here using shaded line segments). }
    \label{fig:split}
\end{figure}

\section{Proof of \texorpdfstring{\Cref{thm-general}}{Theorem \ref{thm-general}}}\label{mainthm-sec}

We now prove \Cref{thm-general}.  The key estimate will be the following explicit upper bound on the failure probability at a single location $(n-1,1)$.

\begin{proposition}[Bound on failure probability]\label{very-general}
Let $a_1,\dots,a_n$ be independent random variables, each taking values in the non-negative integers between $0$ and $D$. 
For each $i$, let $\rho_i \coloneqq \sup_A\P( a_i \in A)$, where $A$ ranges over $2$-separated sets $A\subset \Z$. 
Then we have
\[\P(a_{(n-1,1)}>1) \le 2^D \left( \frac{en}{D}+e\right)^{2D} \left(\prod_{i=1}^n\rho_i\right)^{1/2}.\]
\end{proposition}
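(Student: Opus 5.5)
The plan is a union bound over towers. Consider the full triangle $\nabla^{I_*}$ with $I_* \coloneqq 1+[n]$, whose bottom vertex is $(n-1,1)$. Since the $a_i$ take values in $\{0,\dots,D\}$, \Cref{succ-lemma}(ii) shows that $\nabla^{I_*}$ is $\{0,\dots,D\}$-valued. So by \Cref{attain}, the event $a_{(n-1,1)}>1$ is contained in the union, over all towers $T$ of complexity $D$ for $\nabla^{I_*}$, of the events that $T$ is attained. It then suffices to prove two bounds:
\begin{itemize}
\item[(a)] each fixed tower is attained with probability at most $\bigl(\prod_{i=1}^n \rho_i\bigr)^{1/2}$;
\item[(b)] the number of towers is at most $2^D\left(\frac{en}{D}+e\right)^{2D}$.
\end{itemize}

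\emph{Probability bound (a).} Fix a tower $T$ with data $d_1<\dots<d_k$ and triangles $\nabla^{I_1},\dots,\nabla^{I_k}$. Let $\mathcal{L}$ (resp. $\mathcal{R}$) be the set of left (resp. right) positions of locations in $\bigcup_m \nabla^{I_m}$. By \Cref{tower-large}, every $i \in \{1,\dots,n\}$ lies in $\mathcal{L}\cup\mathcal{R}$.

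I claim that $\P(T \text{ attained}) \le \prod_{i\in\mathcal{L}}\rho_i$. For each $j \in \mathcal{L}$, pick a witness location $q_j=(t_j,j)$ in some $\nabla^{I_m}$, and let $E_j$ be the event that $q_j$ is $\{0,d_m\}$-valued. The event $E_j$ depends only on $a_j,\dots,a_{j+t_j}$. By \Cref{sep-lemma}, once $a_{j+1},\dots,a_n$ are fixed, $E_j$ becomes the event that $a_j$ lies in a $2$-separated set determined by those later values. By independence, its conditional probability is therefore at most $\rho_j$. Apply the chain rule to $\bigcap_{j\in\mathcal{L}}E_j$, revealing the indices in decreasing order of $j$: each new event is measurable with respect to $a_j,\dots,a_n$, so each factor is at most $\rho_j$.

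Symmetrically, $\P(T\text{ attained})\le\prod_{i\in\mathcal{R}}\rho_i$. Here we use right-position witnesses, the other half of \Cref{sep-lemma}, and reveal indices in increasing order. Taking the geometric mean of the two bounds and using $\rho_i\le 1$ together with $\mathcal{L}\cup\mathcal{R}=\{1,\dots,n\}$ gives (a).

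\emph{Counting (b).} The choice of $d_1<\dots<d_k$ is a subset of $\{2,\dots,D\}$, which accounts for the factor $2^D$, and $k\le D$. Given the previous triangles, $\nabla^{I_j}$ is determined by the depth of its bottom vertex together with $j_l(I_j)$ and $j_r(I_j)$.
\begin{itemize}
\item The depth of the bottom vertex of $\nabla^{I_j}$ is forced: it equals $i(I_{j-1})-1$ for $j\ge 2$, and $n-1$ for $j=1$.
\item Since that bottom vertex lies in $I_{j-1}^-$, we have $j_l(I_{j-1})\le j_l(I_j)$ and $j_r(I_j)\le j_r(I_{j-1})$.
\end{itemize}
So a tower is encoded by a nondecreasing sequence of $k$ left positions and a nonincreasing sequence of $k$ right positions, all in a range of size about $n$. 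The number of such pairs of sequences is at most $\binom{n+k}{k}^2\le\left(\frac{e(n+k)}{k}\right)^{2k}$. This expression is increasing in $k$, so it is at most $\left(\frac{en}{D}+e\right)^{2D}$ for $k\le D$.

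The main obstacle I expect is bookkeeping in (b): checking that the endpoints really are monotone and lie in a range of size $n$ (rather than, say, $2n$), so that the binomial bound yields exactly the stated constant. The conditioning argument in (a) is routine once the reveal order is fixed.
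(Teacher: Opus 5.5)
Your proposal is correct and is essentially the paper's proof: a union bound over attained towers via \Cref{attain}, each tower bounded using \Cref{tower-large} plus \Cref{sep-lemma} with sequential conditioning, and a stars-and-bars count. Taking the geometric mean of your left and right bounds is equivalent to the paper's pigeonhole step. In (b) the paper encodes a tower by the depths $0=i(I_k)<\dots<i(I_1)$ and the left positions, and discards the right-position constraint; you encode it by the left and right positions instead, which carries the same information.

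One sentence in (b) is wrong, although the encoding claim it supports is true. You say $\nabla^{I_j}$ is determined by the depth of its bottom vertex together with $j_l(I_j)$ and $j_r(I_j)$. That depth already equals $j_r(I_j)-j_l(I_j)$, so this data fixes only the bottom vertex. It leaves free the depth of the top edge $I_j$, i.e.\ the size of the triangle.

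Injectivity of your encoding comes from the next triangle instead. Since the bottom vertex of $\nabla^{I_{j+1}}$ lies in $I_j^-$, you have $i(I_j)=j_r(I_{j+1})-j_l(I_{j+1})+1$ for $j<k$, and $i(I_k)=0$ by axiom (iii). So the right-position sequence recovers exactly the depth data the paper records explicitly.

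With that fix, and since all positions lie in $\{1,\dots,n\}$, your count of at most $2^{D-1}\max_{k\le D}\binom{n+k}{k}^2\le 2^D\bigl(\tfrac{en}{D}+e\bigr)^{2D}$ goes through.
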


\begin{remark}
    Note $\rho_i\ge 1/2$ for each $i$ (because the odd and even integers partition $\Z$ into $2$-separated sets). So this bound is non-trivial only when $D = O(n)$.
\end{remark}

Let us quickly deduce Theorem~\ref{thm-general} from this proposition.

\begin{proof}[Proof of Theorem~\ref{thm-general} assuming Proposition \ref{very-general}.]
    Fix $\eps>0$ and take $\delta=\delta(\eps)>0$ sufficiently small.
    
Let $a_1,a_2,\dots,$ be an infinite sequence of random variables satisfying the axioms (i) and (ii) of the theorem. Our task is to show that almost surely, one has $a_{(n-1,1)} \leq 1$ for all sufficiently large $n$.
    
From axiom (i), we know almost surely that for all sufficiently large $n$, one has $a_1,\dots,a_n \leq \delta n$.  Thus, if we let $B_n$ denote the bad event that $a_1,\dots,a_n \leq \delta n$ but $a_{(n-1,1)} > 1$, it suffices to show that almost surely only finitely many of the $B_n$ hold.  By the Borel--Cantelli lemma, it will suffice to establish a bound of the shape $\P(B_n) \leq 1 / n^2$ (say) for sufficiently large $n$.

Let $n$ be large.  We define the modified random variables $a'_i$ for $i=1,\dots,n$ to equal $a_i$ if $a_i \leq \delta n$, and either $0$ or $1$ (chosen with equal probability) otherwise, keeping the $a'_i$ jointly independent.  By axiom (ii), there exists an $i_0$ such that for any $i \geq i_0$ and and any $2$-separated set $A$, one has
$$ \P( a'_i \in A ) \leq \P( a_i \in A) + \P( a_i > \delta n) \leq 1 - \eps + \P( a_i > \delta n)$$
while since $A$ can only contain at most one of $0$ or $1$ we have
$$ \P( a'_i \in A ) \leq 1 - \frac{1}{2} \P( a_i > \delta n)$$
and hence on averaging we have
$$ \P( a'_i \in A ) \leq 1 - \frac{\eps}{3}.$$
If we let $a'_{(i,j)}$ be the Gilbreath array associated to the $a'_i$, then we see that when $B_n$ holds, we have $a'_i = a_i$ for all $i=1,\dots,n$, and $a'_{(n-1,1)} = a_{(n-1,1)} > 1$.  Thus we have
$$ \P(B_n) \leq \P( a'_{(n-1,1)} > 1).$$
Applying Proposition \ref{very-general}, we conclude that
$$ \P(B_n) \leq 2^{\delta n} \left( \frac{en}{\delta n}+e\right)^{2\delta n} \left( 1 - \frac{\eps}{3} \right)^{n-i_0}.$$
Since $\delta \log \frac{1}{\delta} \to 0$ as $\delta \to 0$, we conclude that for $\delta$ small enough depending on $\eps$, the right-hand side decays exponentially in $n$, and in particular is bounded by $1/n^2$ for sufficiently large $n$.  The claim follows.
\end{proof}

We are now ready to prove Proposition~\ref{very-general}.   By Lemma \ref{succ-lemma}(ii), the backwards light cone $\nabla_{(n-1,1)}$ is $\{0,\dots,D\}$-valued.
By Lemma \ref{attain}, if $a_{(n-1,1)} > 1$, then there is at least one tower $T$ of complexity $D$ for $\nabla_{(n-1,1)}$ which is attained.  By the union bound, we conclude that
$$ \P(a_{(n-1,1)}>1) \le \sum_{T\in \mathcal{T}} \P\left(T\text{ is attained}\right)$$
where $\mathcal{T}$ is the collection of all towers for  $\nabla_{(n-1,1)}$ of complexity $D$.

We can get a good bound for each summand:

\begin{lemma}[Unlikely attainment]\label{tower-prob} Each tower $T$ for $\nabla_{(n-1,1)}$ has a probability of at most $\prod_{i=1}^n\rho_i^{1/2}$ of being attained.
\end{lemma}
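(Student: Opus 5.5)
We need to show that a fixed tower $T$ for $\nabla_{(n-1,1)}$ is attained with probability at most $\prod_{i=1}^n \rho_i^{1/2}$.

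The plan is to use \Cref{tower-large} to produce many initial positions $j \in \{1,\dots,n\}$ that are each ``pinned'' by some $\{0,d\}$-valued location of the tower. Then \Cref{sep-lemma} shows that, given everything to one side, each such $a_j$ is forced into a $2$-separated set, which costs a factor $\rho_j$. Finally we split into left and right shadows and take a geometric mean.

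For each $p = (0,j)$ in the top row $I_* = 1+[n]$, \Cref{tower-large} gives a location $p'$ in some triangle $\nabla^{I_m}$ of $T$ with $j_l(p') = j$ or $j_r(p') = j$. Let $L$ be the set of $j$ for which the first alternative holds, and $R$ the set for which the second holds. Then $L \cup R = \{1,\dots,n\}$, so $|L|+|R| \geq n$. More usefully, the weighted version
\[\prod_{j\in L}\rho_j \cdot \prod_{j \in R}\rho_j \leq \prod_{i=1}^n \rho_i\]
holds because every $\rho_i \le 1$. Hence it suffices to prove
\[\P(T \text{ attained}) \leq \prod_{j\in R}\rho_j \quad\text{and}\quad \P(T \text{ attained}) \leq \prod_{j\in L}\rho_j;\]
multiplying these and taking square roots gives the lemma.

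For the right bound, order $R$ increasingly as $j_1<j_2<\dots$ and reveal $a_1,a_2,\dots$ one at a time. Fix $j\in R$ with witness $p'=(i,j')$, so $j'+i=j$ and $p'$ must be $\{0,d_m\}$-valued on attainment. Conditioned on $a_1,\dots,a_{j-1}$ (in particular on $a_{j'},\dots,a_{j-1}$), \Cref{sep-lemma} applied with $D=\{0,d_m\}$, which is $2$-separated since $d_m\ge 2$, says the set of values of $a_j$ making $p'$ $D$-valued is $2$-separated. By independence its conditional probability is therefore at most $\rho_j$.

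Let $E_j$ be the event that $p'$ is $\{0,d_m\}$-valued; it depends only on $a_{j'},\dots,a_j$. The intersection of the $E_j$, $j\in R$, contains the attainment event, and chaining conditional probabilities in increasing order of $j$ gives $\prod_{j\in R}\rho_j$. The left bound is symmetric: reveal from the right, using the variant of \Cref{sep-lemma} that fixes $a_{j+1},\dots,a_{j+i}$.

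The main subtlety is this bookkeeping. We must ensure that each witness event depends only on variables up to $j$ in the right case (respectively from $j$ onward in the left case), so that the successive conditionings are legitimate. We must also ensure that the witness is genuinely constrained to a $2$-separated set, which needs $d_m \ge 2$. A position $j$ lying in both $L$ and $R$ is harmless, since we simply use it in both products.
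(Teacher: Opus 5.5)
Your proof is correct and is essentially the paper's argument. \Cref{tower-large} supplies a left or right witness for each top-row position, \Cref{sep-lemma} combined with sequential conditioning turns each one-sided family of witnesses into a product of $\rho_j$'s, and the square root comes from comparing the two sides. The only cosmetic difference is that you prove both one-sided bounds and multiply them, whereas the paper pigeonholes to the side whose product is at most $(\prod_{i=1}^n\rho_i)^{1/2}$ and proves only that bound; these are equivalent, since the minimum of two numbers is at most their geometric mean.
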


\begin{proof}
    Fix a tower $T$.  By \Cref{tower-large}, we see that for every $1 \leq i \leq n$, there is an element $p_i$ of the tower with either $j_l(p_i)= i$ or $j_r(p_i) = i$.  By the pigeonhole principle (and the trivial bound $\rho_i \leq 1$), we either have
\begin{equation}\label{ij}
\prod_{1 \leq i \leq n: j_l(p_i) = i} \rho_i \leq  \left(\prod_{i=1}^n\rho_i\right)^{1/2}
\end{equation}
or
$$ \prod_{1 \leq i \leq n: j_r(p_i) = i} \rho_i \leq  \left(\prod_{i=1}^n\rho_i\right)^{1/2}.$$
For sake of argument we shall assume that \eqref{ij} holds, as the other case is similar.  Let $1 \leq i_1 < \dots < i_t \leq n$ be the indices $i_s$ for which $j_l(p_{i_s}) = i_s$, thus
$$
\rho_{i_1} \dots \rho_{i_t}  \leq  \left(\prod_{i=1}^n\rho_i\right)^{1/2}.$$
In order for $T$ to be attained, we need to satisfy constraints of the form
\begin{equation}\label{apis}
 a_{p_{i_s}} \in \{0, d_{c_s}\}
 \end{equation}
for $s=1,\dots,t$, where $c_s$ is the index of the triangle in the tower $T$ that contains $p_{i_s}$.  Since $j_l(p_{i_s}) = i_s$, we see that the value of the random variable $a_{p_{i_s}}$ only depends on the values of $a_{i_s},\dots,a_n$.  Since $d_{c_s} \geq 2$, the set $\{0, d_{c_s}\}$ is $2$-separated.  From Lemma \ref{sep-lemma}, we conclude that if we condition the random variables $a_{i_s+1},\dots,a_n$ to be fixed, then in order for \eqref{apis} to hold, $a_{i_s}$ must lie in a certain $2$-separated set (depending on $a_{i_s+1},\dots,a_n$).  The conditional probability of doing so is therefore at most $\rho_{i_s}$.  Applying the law of total probability, we conclude that the probability that \eqref{apis} holds for all $1 \leq s \leq t$ simultaneously is at most
$$ \rho_{i_1} \dots \rho_{i_t},$$
giving the claim.
\end{proof}

Finally, we bound the number of towers of a given complexity and length.

\begin{lemma}[Number of towers]\label{number}
    The number of towers for $\nabla_{(n-1,1)}$ of complexity $D$ and length $k$ is at most 
    $$\binom{D-1}{k}\binom{n-1}{k-1}\binom{n+k-1}{k-1}.$$
\end{lemma}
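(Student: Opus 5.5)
We count towers of height (``length'') $k$ by encoding each one through a small amount of data and bounding the number of choices for each piece; the three binomial factors should match the three pieces.

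\emph{The values $d_1<\dots<d_k$.} These form a strictly increasing sequence of integers in $\{2,\dots,D\}$, a set of size $D-1$. So there are at most $\binom{D-1}{k}$ choices.

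\emph{The bottom vertices.} The vertex $I_1^{+\infty}$ is forced to be $(n-1,1)$ by axiom (i) of Definition \ref{tower-def}. For $2\le j\le k$, the vertex $I_j^{+\infty}$ lies in the parent $I_{j-1}^-$, which is a block at depth $i(I_{j-1})-1$. The plan is to parametrize each triangle $\nabla^{I_j}$ by its upper side $I_j$, which is determined by the bottom vertex together with the depth of $I_j$. So it is enough to count the possible sequences of depths $i(I_1),\dots,i(I_k)$ together with the positions of the successive bottom vertices.

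\emph{The depths.} The bottom vertex of $\nabla^{I_{j}}$ has depth $i(I_{j-1})-1$, and $I_j$ lies strictly above that vertex or is equal to it. Hence the depths of the upper sides satisfy
$$n-1\ \ge\ i(I_1)\ >\ i(I_2)\ >\ \dots\ >\ i(I_k)=0.$$
The last equality holds because $I_k$ is a subblock of $I_*$, which sits at depth $0$ by axiom (iii). This leaves $k-1$ distinct values $i(I_1),\dots,i(I_{k-1})$ chosen from $\{1,\dots,n-1\}$, giving at most $\binom{n-1}{k-1}$ choices. (If the bookkeeping is off by one, for instance if $i(I_1)\le n-1$ is the relevant range, I would adjust the ground set, but $n-1$ values looks right.)

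\emph{The horizontal positions.} Given the depths, the remaining freedom is where $I_j^{+\infty}$ sits inside $I_{j-1}^-$ for each $j\ge 2$, and the length of $I_k$ inside $I_*$. I would encode these through the horizontal offsets $h_j$ of $I_j^{+\infty}$ relative to the left endpoint of $I_{j-1}^-$. Each block $I_{j-1}^-$ has length $\ell(I_{j-1})+1$, and all the triangles are nested in $\nabla_{(n-1,1)}$, whose top row has length $n$. So the total drift is controlled: I expect to be able to show that $\sum_{j}(\text{offset increments})\le n$ plus one unit of slack per step. This comes from the fact that the left positions $j_l(I_j)$, read along the tower, form a sequence within $[1,n]$ which can move only in a constrained way, gaining at most one extra unit per level from the ``$+1$'' in the parent length. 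Such sequences are determined by a $(k-1)$-element subset of a set of size $n+k-1$, by a stars-and-bars count, which gives $\binom{n+k-1}{k-1}$.

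Multiplying the three counts gives the stated bound. The main obstacle is the third factor. One has to choose the right monotone encoding, most likely the left positions $j_l$ (or the offsets inside each parent) ordered along the tower, and then verify that the constraint from axiom (ii) really makes the encoding injective and places it in a stars-and-bars class of size $\binom{n+k-1}{k-1}$. The crude alternative of bounding each offset by $n$ would give $n^{k-1}$, which is too weak. I would first try to show that the offsets $h_j$ satisfy $\sum_j h_j\le n-1+(k-1)$, which follows from nesting inside the light cone and then yields the binomial count directly.
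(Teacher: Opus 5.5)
Your first two factors are correct and coincide with the paper's count. The values $2\le d_1<\dots<d_k\le D$ give $\binom{D-1}{k}$, and the depths $0=i(I_k)<\dots<i(I_1)\le n-1$ give $\binom{n-1}{k-1}$. The gap is in the third factor, which is the only step needing an idea. You leave it as an expectation, and the mechanism you describe is wrong.

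\textbf{The missing observation: there is no slack.} A parent $I^-=(i-1,a)+[\ell+1]$ has the same left position $a$ as $I=(i,a)+[\ell]$. The extra unit of length is absorbed by the decrease in depth, so $I^-$ also has the same right position. Likewise, a bottom vertex $I_r^{+\infty}$ has the same left position as $I_r$. So axiom (ii) of a tower gives
$j_l(I_r)=j_l(I_r^{+\infty})\ge j_l(I_{r-1}^-)=j_l(I_{r-1})$.
Your offsets are therefore exactly $h_r=j_l(I_r)-j_l(I_{r-1})\ge 0$, and they telescope:
$\sum_{r=2}^k h_r=j_l(I_k)-j_l(I_1)=j_l(I_k)-1\le n-1$.
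This uses $j_l(I_1)=1$ from axiom (i) and $I_k\subset I_*$ from axiom (iii). Stars and bars then gives $\binom{n+k-2}{k-1}\le\binom{n+k-1}{k-1}$. This is exactly the paper's argument: it records the non-decreasing sequence $1\le j_l(I_1)\le\dots\le j_l(I_k)\le n$ and discards the companion constraint that right positions are non-increasing. The ``one extra unit per level from the $+1$ in the parent length'' that you anticipate does not exist.

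\textbf{Your proposed inequality is too weak.} Even if you proved $\sum_r h_r\le n-1+(k-1)$, it would not give the stated bound. The number of $(k-1)$-tuples of non-negative integers with sum at most $n+k-2$ is $\binom{n+2k-3}{k-1}$, which is strictly larger than $\binom{n+k-1}{k-1}$ once $k\ge 3$. Your sentence about $(k-1)$-element subsets of an $(n+k-1)$-element set is the right count, but only once monotonicity of the left positions is in hand; the built-in slack would break it.

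\textbf{Two smaller points.} First, the length of $I_k$ is not extra freedom: it equals $i(I_{k-1})$, which is fixed by the depths. Second, injectivity of the encoding needs nothing beyond the fact that $I_r$ is determined by three quantities: its depth, its left position, and the depth of its bottom vertex. That last depth is $n-1$ for $r=1$ and $i(I_{r-1})-1$ otherwise.
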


\begin{proof}  In order to specify such a tower, it suffices to provide
\begin{itemize}
    \item A sequence of values $2\le d_1< \dots < d_k \le D$;
    \item A sequence $0 = i_k < \dots < i_1 \leq n-1$ of depths of the intervals $I_1,\dots,I_k$ defining the tower; and
    \item A sequence $1 \leq j_1(I_1) \leq \dots \leq j_l(I_k) \leq n$ of left positions for these intervals.
\end{itemize}
There is an additional constraint that the right-positions $j_r(I_1) \geq \dots \geq j_r(I_k)$ need to be non-increasing, but we will discard it.

By elementary stars-and-bars combinatorics, we can bound the number of possible choices for each of these sequences by $\binom{D-1}{k}$, $\binom{n-1}{k-1}$, and $\binom{n+k-1}{k-1}$ respectively.   The claim follows.
\end{proof}

Inserting these bounds, we conclude that
$$ \P(a_{(n-1,1)}>1) \le \left( \sum_{k=1}^{D-1} \binom{D-1}{k} \binom{n-1}{k-1}\binom{n+k-1}{k-1} \right) \prod_{i=1}^n \rho_i^{1/2}.
$$
Since 
$$ \binom{n-1}{k-1}\binom{n+k-1}{k-1}\le \binom{n+k-1}{k-1}^2\le \binom{n+D}{D}^2$$
we have from the binomial theorem that
$$ \sum_{k=1}^{D-1} \binom{D-1}{k} \binom{n-1}{k-1}\binom{n+k-2}{k-1} \leq 2^D \binom{n+D}{D}^2.$$
Using the standard bound 
$$\binom{n}{s} \leq \frac{n^s}{s!} \leq \left( \frac{en}{s} \right)^s$$
(which comes from the Taylor series bound $e^s \geq \frac{s^s}{s!}$), we obtain \Cref{very-general}, and hence \Cref{thm-general}.

\begin{remark}[Exponential upper bound]\label{exp-rem} It is possible that the upper bound $\delta n$ in Theorem \ref{thm-general} can be raised somewhat.  However, it cannot be replaced by $2^{n+1}$.  Indeed, suppose that each $a_n$ is uniformly distributed in $\{0,\dots,2^{n+1}\}$.  If one conditions the value of $a_1,\dots,a_{n-1}$ to be fixed, then $a_{(n-1,1)}$ is a piecewise linear function of $a_n$, and an easy induction shows that this function has at most $2^{n-1}$ pieces, none of which are constant.  Thus there are at most $2^n$ values of $a_n$ that would lead to $a_{(n-1,1)}$ attaining the value of $0$ or $1$, and so with conditional probability at least $1/2$, this will not be the case. So, almost surely $a_{(n-1,1)}$ will not be $\{0,1\}$-valued for infinitely many $n$.  It is surprisingly difficult to narrow the gap between the lower bound $\delta n$ and the upper bound $2^{n+1}$ significantly; this is related to our poor understanding of the sequence $c_i$.  However, we believe that the lower bound is closer to the truth.
\end{remark}

\section{Deterministic arguments}\label{deterministic-sec}

In this section we prove \Cref{thm-deterministic}. Let the notation be as in the theorem.  Applying contrapositives, it suffices to show that that axioms (i) and (ii), as well as the negated conclusion $a_{(N-1,1)} > 1$, imply the negation of (iii).  Rewriting these assumptions in the notation of Section~\ref{locations-sec}, we are now assuming
\begin{itemize}
\item[(i)] (No large initial values) The top row of the triangle $\nabla_{(N-1,1)}$ is $\{0,\dots,2^M\}$-valued.  (By \Cref{succ-lemma}(ii), this implies that the rest of $\nabla_{(N-1,1)}$ is also $\{0,\dots,2^M\}$-valued.)
\item[(ii)] The triangle $\nabla_{(N-1,1)}$ does not contain a $0$-valued block of length $L$. 
\item[($\neg$)] $(N-1,1)$ attains a value greater than $1$.
\end{itemize}
Our revised task is to establish 
\begin{itemize}
\item[($\neg$ iii)] (Long shallow two-valued block)  The subtriangle $\nabla_{(N-N', N')}$ of $\nabla_{(N-1,1)}$ contains a $\{0,d\}$-valued block $(i,j)+[k]$ of length at least $R_m - 3 R_{m-1}$ and depth at most $2R_{m-1}$ for some $1 \leq m \leq M$, $2^{M-m} < d \leq 2^{M-m+1}$.
\end{itemize}

For future reference we observe the following immediate consequence of axiom (ii):

\begin{itemize}
\item[(ii')] If a block in $\nabla_{(N-1,1)}$ is $\{0,d\}$-valued and has length at least $L$, then it must attain $d$.
\end{itemize}

\subsection{Coarse monotonicity}

We begin with a useful preliminary observation, that the Gilbreath array is ``coarsely monotone'' in the sense that a large value at a location $p$ will propagate backwards to give large values at all other locations $p'$ in the backwards light cone of $p$, up to a horizontal error of $O(2^M L)$.  We begin with a technical inductive version of this claim.

\begin{lemma}[Inductive claim]\label{syndetic-inductive}  Let $d \geq 1$, let $I$ be a $\{0,d\}$-valued block $\nabla_{(N-1,1)}$ of some depth $i(I)$ and length at most $L$ that attains $d$ at least once, and let $p'$ be a location $\nabla_{(N-1,1)}$ of depth $i(p')$ at most $i(I)$.  Then there is a location $p''$ in $\nabla_{(N-1,1)}$ at the same depth as $p'$ that attains a value of at least $d$, and whose horizontal distance $|x(p'')-x(p')|$ to $p'$ obeys the upper bound
$$ |x(p'')-x(p')| \leq \max \left( |x(p')-x(I)| - \frac{i(I)-i(p')}{2} - \frac{\ell(I)-1}{2}, 0 \right) + (2^M-d+1) \left( L + \frac{1}{2} \right).$$
\end{lemma}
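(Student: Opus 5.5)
The plan is a double induction: downward on $d$ (that is, upward on $2^M-d$, which is legitimate since every value in $\nabla_{(N-1,1)}$ is at most $2^M$ by axiom (i)), and, for fixed $d$, on the depth gap $i(I)-i(p')$. Write $\Phi(I,p') \coloneqq \max\left(|x(p')-x(I)| - \frac{i(I)-i(p')}{2} - \frac{\ell(I)-1}{2},0\right)$. Geometrically, $|x(p')-x(I)|-\frac{\ell(I)-1}{2}$ is the horizontal distance from $p'$ to the nearer endpoint of $I$ (when $p'$ lies outside the horizontal extent of $I$). Subtracting $\frac{i(I)-i(p')}{2}$ accounts for the backwards light cone of $I$ widening by $\frac12$ on each side per row.

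\emph{Base case} $i(p')=i(I)$. We take $p''$ to be a $d$-valued location of $I$. Its distance to $p'$ is at most the distance from $p'$ to the nearest point of $I$ plus $\ell(I)-1 \le L-1$. This is at most $\Phi(I,p')+L-1$, which is within the allowed bound because $2^M-d+1 \geq 1$.

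\emph{Inductive step} $i(I)>i(p')$. Then $I$ has depth at least one, and we apply \Cref{pred-lemma} to $I$.

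In case (i), $I^-$ is $\{0,d\}$-valued and attains $d$. Passing from $I$ to $I^-$ changes depth by $-1$ and length by $+1$, so $\Phi(I^-,p')=\Phi(I,p')$. If $\ell(I^-)\le L$, we apply the induction hypothesis (same $d$, smaller depth gap) to $I^-$ and are done. If $\ell(I^-)=L+1$, we replace $I^-$ by the length-$L$ subblock $J\subset I^-$ lying nearest to $p'$. By (ii'), $J$ attains $d$, and we check that $\Phi(J,p')\le \Phi(I^-,p')+\frac12$.

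Here lies the main obstacle. Dropping an endpoint can shift $x(J)$ by $\frac12$, and we must make sure this does not accumulate over many steps. The point to verify is that the nearer endpoint of the block to $p'$ is always retained. Then $\Phi$ is actually unchanged when $p'$ lies outside the block's shadow, and remains $0$ when $p'$ lies inside. I would check this carefully, possibly tracking the left- and right-null distances $j_l,j_r$ separately rather than $x$, which should make the bookkeeping exact.

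In cases (ii) and (iii), $I^-$ contains a location $q$ with $a_q=d'$ for some $d'\ge d+1$: in case (ii) we take $d'=a+d$, and in case (iii) $d'\ge 2d$. We regard $q$ as a length-one $\{0,d'\}$-valued block attaining $d'$. Since $q\in I^-$, we have $\Phi(q,p')\le \Phi(I^-,p')+\ell(I^-)-1 \le \Phi(I,p')+L$, up to a $\frac12$ from the depth shift; the $L$ there uses $\ell(I^-)=\ell(I)+1\le L+1$. The induction hypothesis for the larger value $d'$ gives $p''$ with $a_{p''}\ge d'\ge d$ and
$$|x(p'')-x(p')|\le \Phi(I,p')+L+\tfrac12+(2^M-d')(L+\tfrac12)\le \Phi(I,p')+(2^M-d+1)(L+\tfrac12).$$
Thus the extra $L+\frac12$ lost in moving to $q$ is exactly paid for by the increment $d'\ge d+1$. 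This explains the shape of the constant $(2^M-d+1)(L+\frac12)$ in the statement, and I expect the precise $\frac12$'s to be matched by this accounting.
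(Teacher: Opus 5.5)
Your proposal is correct and is essentially the paper's proof. It has the same base case, the same three-way split from \Cref{pred-lemma}, the same replacement of a length-$(L+1)$ parent by the length-$L$ subblock nearest $p'$ via (ii'), and the same trick of paying for the $L+\frac12$ lost in passing to a single location $q$ with the increment $d'\ge d+1$. The paper simply inducts on $i(I)$ for all $d$ simultaneously, so your outer induction on $d$ is harmless but unnecessary.

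The step you flagged does give the exact inequality $\Phi(J,p')\le\Phi(I^-,p')$. The paper gets it from $|x(p')-x(J)|\le\max(|x(p')-x(I)|-\frac12,\frac12)$, and the only problematic tie, $x(p')=x(I)$ with $i(I)-i(p')=L=1$, is ruled out by parity of the half-integer coordinates. In your final display, $(2^M-d')$ should read $(2^M-d'+1)$; this is still absorbed since $d'\ge d+1$.
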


\begin{proof} 
By axiom (i), we have $d \leq 2^M$.

Next, we induct on the depth $i(I)$, assuming that the claim has already been proven for smaller values of $i(I)$ (this is vacuous for $i(I) = 0$).

If $i(p') = i(I)$, then we can take $p''$ to be an element of $I$ which attains the value $d$.  As $I$ has length at most $L$, one has $|x(p'') - x(I)| \leq \frac{L}{2}$, and hence
$$ |x(p'')-x(p')| \leq |x(p')-x(I)| + \frac{L}{2},$$
which gives the claim since $d \leq 2^M$ and $\frac{\ell(I)-1}{2}+\frac{L}{2} \leq L + \frac{1}{2}$.

Thus we may now assume that $i(p') < i(I)$, so in particular $I$ has positive depth.  Consider the parent $I^-$ of $I$.  This is a block of depth $i(I)-1$, central horizontal coordinate $x(I)$, and length $\ell(I)+1$ (so at most $L+1$).  By \Cref{pred-lemma}, the parent $I^-$ is either $\{0,d\}$-valued (and attains $d$ at least once), or else attains some value $d' > d$.

First suppose that $I^-$ is $\{0,d\}$-valued and attains $d$ at least once.  If $I^-$ has length at most $L$, then we can apply the induction hypothesis and find a location $p''$ in $\nabla_{(N-1,1)}$ at the same depth as $p'$ that attains a value of at least $d$ with
\begin{align*}
|x(p'')-x(p')| &\leq \max \left( |x(p')-x(I)| - \frac{i(I)-1-i(p')}{2} - \frac{\ell(I)+1-1}{2}, 0 \right) \\
&\quad + (2^M-d+1) \left( L + \frac{1}{2} \right), 
\end{align*}
closing the induction in this case.  If instead $I^-$ has length $L+1$, then it contains two subblocks of length $\ell(I) = L$, each of which remain $\{0,d\}$-valued and of depth $i(I)-1$, and have central horizontal coordinate $x(I) \pm \frac{1}{2}$.  By axiom (ii'), both of these blocks must attain $d$.  If we let $J$ be the subblock whose central coordinate $x(J) = x(I) \pm \frac{1}{2}$ is closest to $p'$ (breaking ties arbitrarily), then
$$ |x(p')-x(J)| \leq \max \left( |x(p')-x(I)| - \frac{1}{2}, \frac{1}{2} \right)$$
and hence
\begin{align*}
    & \max \left( |x(p')-x(J)| - \frac{i(I)-1-i(p')}{2} - \frac{\ell(I)-1}{2}, 0 \right)\\
&\quad \leq \max \left( |x(p')-x(I)| - \frac{i(I)-i(p')}{2} - \frac{\ell(I)-1}{2}, 0 \right).
\end{align*}
On the other hand, by the induction hypothesis we can find a location $p''$ in $\nabla_{(N-1,1)}$ at the same depth as $p'$ that attains a value of at least $d$ with
\begin{align*} 
|x(p'')-x(p')| &\leq \max \left( |x(p')-x(J)| - \frac{i(I)-1-i(p')}{2} - \frac{\ell(I)-1}{2}, 0 \right) \\
&\quad + (2^M-d+1) \left( L + \frac{1}{2} \right).
\end{align*}
Thus we can also close the induction in this case.

Finally, suppose that $I^-$ attains a value $d'>d$ at some location $p_*$.  Applying the inductive hypothesis with $I$ replaced by the length one block $p_*$, we can find a location $p''$ in $\nabla_{(N-1,1)}$ at the same depth as $p'$ that attains a value of at least $d'$ (and hence at least $d$) with
\begin{align*}
     |x(p'')-x(p')| &\leq \max \left( |x(p')-x(p_*)| - \frac{i(I)-1-i(p')}{2}, 0 \right) \\
     &\quad + (2^M-d'+1) \left( L + \frac{1}{2} \right).
\end{align*}
Since $I^-$ has length at most $L+1$ and central horizontal coordinate $x(I)$, we have
$$ |x(p_*) - x(I)| \leq \frac{L+1}{2}.$$
Since $I$ has length at most $L$, we have
$$ \frac{\ell(I)-1}{2} \leq \frac{L-1}{2}.$$
Finally, we have $d' \geq d+1$.  We conclude that
\begin{align*} |x(p'')-x(p')| &\leq \max \left( |x(p')-x(I)| - \frac{i(I)-i(p')}{2}, 0 \right) \quad \\
&+ \frac{L-1}{2} + \frac{1}{2} + \frac{L+1}{2} + (2^M-(d+1)+1) \left( L + \frac{1}{2} \right),
\end{align*}
and we again close the induction.
\end{proof}

We now specialize this lemma to the claim that we actually need:

\begin{lemma}[Coarse monotonicity]\label{syndetic-lemma}  Let $p$ be a location in $\nabla_{(N-1,1)}$ attaining a value $d \geq 1$, and let $p'$ be a location in the backwards light cone $\nabla_p$ of $p$.  Then there is a location $p''$ in $\nabla_p$ at the same depth as $p'$ that attains a value of at least $d$, and lies within $2^M (L+\frac{1}{2})$ in horizontal distance from $p'$.
\end{lemma}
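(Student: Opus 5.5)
We will obtain this as a direct specialization of \Cref{syndetic-inductive}, taking $I$ to be the length one block $p$ itself. Then $I$ is $\{0,d\}$-valued, attains $d$, and has length $1 \leq L$. Since $p' \in \nabla_p$, we have $i(p') \leq i(p) = i(I)$. So the hypotheses of \Cref{syndetic-inductive} hold.

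The lemma produces a location $p''$ at the depth of $p'$, attaining a value at least $d$, with
$$ |x(p'')-x(p')| \leq \max\left( |x(p')-x(p)| - \frac{i(p)-i(p')}{2}, 0\right) + (2^M-d+1)\left(L+\frac12\right).$$
The key geometric observation is that $p' \in \nabla_p$ means exactly $|x(p')-x(p)| \leq \frac{i(p)-i(p')}{2}$. Indeed, the backwards light cone consists of the locations whose left position is at least $j_l(p)$ and whose right position is at most $j_r(p)$, which is the finite speed of propagation $1/2$. Hence the maximum vanishes. Since $d \geq 1$, the remaining term is at most $2^M(L+\frac12)$, which is the claimed horizontal bound.

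The one point needing care is that \Cref{syndetic-inductive} only places $p''$ in $\nabla_{(N-1,1)}$, whereas we want $p'' \in \nabla_p$. The cleanest fix is to observe that the proof of \Cref{syndetic-inductive} only ever moves to parents and subblocks of parents of the current block, starting from $I=p$. All such blocks stay inside $\nabla_p$, and the final $p''$ is an element of one of these blocks. So one can rerun that induction with $\nabla_{(N-1,1)}$ replaced by $\nabla_p$, noting that axioms (i) and (ii') remain valid on this sub-triangle.

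Alternatively, if some candidate $p''$ lands outside $\nabla_p$, one could try to replace it by the nearest location of $\nabla_p$ at that depth. However, this loses the value lower bound, so the rerun of the induction inside $\nabla_p$ is the route I would take. This containment check is the only real obstacle; everything else is bookkeeping.
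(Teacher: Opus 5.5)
Your proposal is correct and is essentially the paper's proof: apply \Cref{syndetic-inductive} with $I$ the length-one block $p$, use $|x(p')-x(p)|\le \frac{i(p)-i(p')}{2}$ for $p'\in\nabla_p$ so the maximum vanishes, and use $d\ge 1$ to bound the remaining term by $2^M(L+\frac12)$. Your extra check that $p''$ lies in $\nabla_p$, and not merely in $\nabla_{(N-1,1)}$, is left implicit in the paper's one-line proof, and your justification is sound: a parent has the same left and right positions as its child, so every block visited in that induction stays inside $\nabla_p$.
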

  
\begin{proof}   We apply \Cref{syndetic-inductive} with $I$ equal to the length $1$ block $p$.  Because $p'$ lies in $\nabla_p$, one has
$$ |x(p')-x(I)| \leq \frac{i(I)-i(p')}{2}.$$
Since $d \geq 1$, the claim now follows.
\end{proof}

\subsection{Locating a large triangle}

The next step is to reduce matters to analyzing a certain large subtriangle $\nabla^{I_*}$ of the triangle $\nabla_{(N-N',N')}$ which attains a large value at its bottom vertex, and where the bulk of the analysis will be performed.  More precisely, we show

\begin{proposition}[Locating a large triangle]\label{triangle-def}  There exists $1 \leq m \leq M$ and a $\{0,\dots,2^{M-m+1}\}$-valued triangle $\nabla^{I_*}$ in $\nabla_{(N-N',N)}$ whose upper edge $I_*$ is at depth $R_{m-1}$, whose lower vertex $I_*^{+\infty}$ is at depth $R_m$ (so that $\nabla^{I_*}$ is of length $R_m - R_{m-1}$), and which attains a value $d_* \in \{2^{M-m}+1,\dots,2^{M-m+1}\}$ at the bottom vertex $I_*^{+\infty}$.
\end{proposition}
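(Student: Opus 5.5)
The plan is a dyadic pigeonhole on the sizes of row maxima at the depths $R_0<R_1<\dots<R_M$, carried out inside the backwards light cone $\Delta \coloneqq \nabla_{(N-N',N')}$. The point of working inside a light cone is that it is closed under taking backwards light cones. If $p\in\Delta$ has depth $R_m$, then the triangle $\nabla^{I}$, with $I$ the ancestor of $p$ at depth $R_{m-1}$ (a block of length $R_m-R_{m-1}+1$), lies in $\nabla_p\subset\Delta$. Its upper edge therefore lies in row $R_{m-1}$ of $\Delta$.

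\emph{Step 1: a large value in row $R_M$ of $\Delta$.} By hypothesis ($\neg$), the location $(N-1,1)$ attains a value $\geq 2$.
\begin{itemize}
\item Row $R_M$ of $\Delta$ has length $N-N'+1-R_M>(N-N')/2>R_M$, using \eqref{rtag}.
\item By \eqref{R0-lower}, this length vastly exceeds $2\cdot 2^M(L+\tfrac12)$.
\item Let $p'$ be the central location of this row. Since $\Delta\subset\nabla_{(N-1,1)}$, \Cref{syndetic-lemma} applied with $p=(N-1,1)$ gives a location $p''$ at depth $R_M$, within horizontal distance $2^M(L+\frac12)$ of $p'$, attaining a value $\geq 2$.
\item By the margin just noted, $p''$ still lies in row $R_M$ of $\Delta$.
\end{itemize}

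\emph{Step 2: pigeonhole.} For $0\le m\le M$, let $g(m)$ be the maximum value attained on row $R_m$ of $\Delta$. By axiom (i) we have $g(0)\le 2^M$, and by Step 1 we have $g(M)\ge 2$. I claim there is some $1\le m\le M$ with
$$g(m-1)\le 2^{M-m+1}\quad\text{and}\quad g(m)>2^{M-m}.$$
Suppose not. Since $g(0)\le 2^{M}$, the failure at $m=1$ forces $g(1)\le 2^{M-1}$. Inductively, the failure at each $m$ forces $g(m)\le 2^{M-m}$. At $m=M$ this gives $g(M)\le 1$, contradicting Step 1.

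\emph{Step 3: build the triangle.} Fix such an $m$ and a location $q$ in row $R_m$ of $\Delta$ with $a_q=g(m)$. Let $I_*$ be the ancestor of $q$ at depth $R_{m-1}$, so that $\nabla^{I_*}$ has bottom vertex $q$ and lies in $\nabla_q\subset\Delta$.
\begin{itemize}
\item The upper edge $I_*$ lies in row $R_{m-1}$ of $\Delta$, so it is $\{0,\dots,g(m-1)\}\subset\{0,\dots,2^{M-m+1}\}$-valued.
\item By \Cref{succ-lemma}(ii), the whole triangle $\nabla^{I_*}$ is therefore $\{0,\dots,2^{M-m+1}\}$-valued.
\item In particular $d_*\coloneqq a_q\le 2^{M-m+1}$, while $d_*=g(m)>2^{M-m}$ by the choice of $m$.
\end{itemize}

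The only delicate point is the geometric bookkeeping in Step 1. One must check that the horizontal error $2^M(L+\frac12)$ from coarse monotonicity does not push the large value out of $\Delta$, and \eqref{rtag} together with \eqref{R0-lower} give ample room for this. Everything else is the monotone pigeonhole of Step 2 combined with the nesting of light cones.
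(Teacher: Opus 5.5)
Your proposal is correct and follows essentially the same route as the paper. Both use coarse monotonicity (\Cref{syndetic-lemma}) to get a value $\geq 2$ in row $R_M$ of $\nabla_{(N-N',N')}$, the dyadic pigeonhole over the rows at depths $R_0,\dots,R_M$, and $I_*$ taken as the depth-$R_{m-1}$ ancestor of a large-valued location, with \Cref{succ-lemma}(ii) bounding the whole triangle. Your handling of the $2^M(L+\frac12)$ margin via the central location, and of the row length $N-N'+1-R_M$, is slightly more careful than the paper's.
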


\begin{proof}  For each $0 \leq m \leq M$, let $J_m$ be the depth $R_m$ row of $\nabla_{(N-N',N')}$, thus $J_0 < J_1 < \dots < J_M$ and
$$ J_m \coloneqq (R_m, N') + [N-N'-R_m].$$
See Figure \ref{fig:rows}.

\begin{figure}
    \centering
    \includegraphics[width=0.5\linewidth]{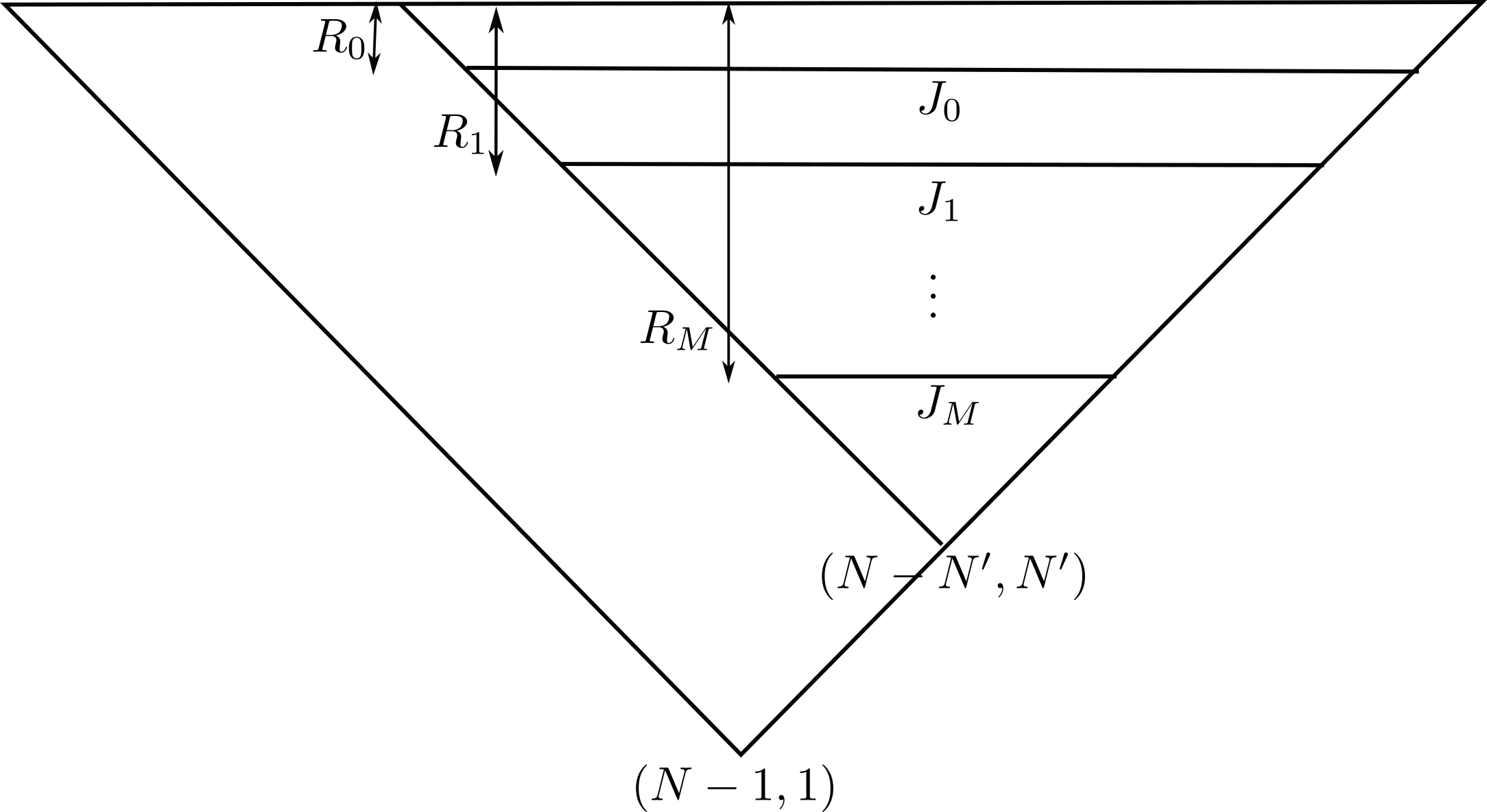}
    \caption{A schematic depiction (not to scale) of the triangle $\nabla_{(N-1,1)}$, the subtriangle $\nabla_{(N-N',N)}$, and the blocks $J_0 < \dots < J_M$ in the latter triangle at various depths $R_0 < \dots < R_M$.}
    \label{fig:rows}
\end{figure}

By \Cref{succ-lemma}(ii), $J_0$ is $\{0,\dots,2^M\}$-valued.  Meanwhile, by the negation of the original conclusion, $(n-1,1)$ attains a value greater than $1$.  Since $J_M$ lies in the backwards light cone of $(n-1,1)$ and has length
$$ \ell(J_M) = N-N'-R_M \geq R_M \geq 2R_0 \geq 2^{M+1} (L+1)$$
thanks to \eqref{rtag}, \eqref{rdouble}, and \eqref{R0-lower}, we conclude from \Cref{syndetic-lemma} that $J_M$ also attains some value greater than $1$; in particular, it is not $\{0,1\}$-valued.

To summarize, the claim ``$J_m$ is $\{0,\dots,2^{M-m}\}$-valued'' is true at $m=0$ but false at $m=M$.  Thus there must exist $1 \leq m \leq M$ such that $J_{m-1}$ is $\{0,\dots,2^{M-m+1}\}$-valued, but $J_m$ is not $\{0,\dots,2^{M-m}\}$-valued. 
 By Lemma~\ref{succ-lemma}(ii), the trapezoid $\Trap^{J_{m-1}}_{J_m}$ is $\{0,\dots,2^{M-m+1}\}$-valued, but attains some value $d_* \in \{2^{M-m}+1,\dots,2^{M-m+1}\}$ at a location $p_*$ in the lower edge $J_m$ of this trapezoid.  If we let $I_*$ be the ancestor of $p_*$ at depth $R_{m-1}$, then $I_*$ lies in $J_{m-1}$ and has length 
 $$R_m-R_{m-1}+1 \ge R_m/2$$
 thanks to \eqref{rdouble}, and hence the triangle $\nabla^{I_*}$ has sidelength $R_m - R_{m-1}+1$, is $\{0,\dots,2^{M-m+1}\}$-valued, and attains the value $d_*$ at its bottom vertex $p_*$; see Figure \ref{fig:trapezoid}.  The claim follows.
 \end{proof}

\begin{figure}
    \centering
    \includegraphics[width=0.5\linewidth]{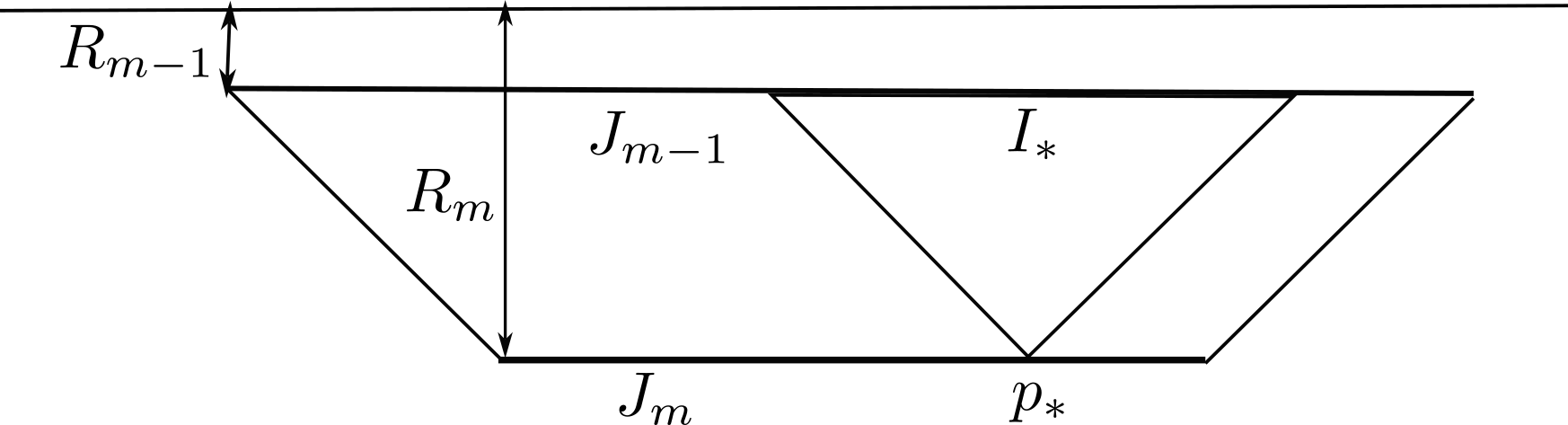}
    \caption{The triangle $\nabla^{I_*}$ inside the trapezoid $\Trap^{J_{m-1}}_{J_m}$.  This triangle attains a value $2^{M-m} < d_* \leq 2^{M-m+1}$ at its bottom vertex $p_*$, but never attains a value larger than $2^{M-m+1}$.}
    \label{fig:trapezoid} 
\end{figure}

\subsection{Good blocks}

Our strategy will be to approximately partition the triangle $\nabla^{I_*}$ produced by \Cref{triangle-def} by a certain ``good'' type of block, which we define as follows.

\begin{definition}[Good blocks]
We say that a block $I = p + [k]$ in $\nabla^{I_*}$ is \emph{good} if there is $d \in \{d_*,\dots,2^{M-m+1}\}$ such that $I$ is $\{0,d\}$-valued, attains the value $d$, and cannot be extended horizontally in either direction without losing at least one of these properties.  That is to say, the locations $p-1$, $p+k$ that bound $I$ to the immediate left and right either lie outside of $\nabla^{I_*}$ or attains a value that lies outside of $\{0,d\}$ (and hence outside of $d\Z$, since $d \geq d_* > 2^{M-m}$, and $\nabla^{I_*}$ is $\{0,\dots,2^{M-m+1}\}$-valued).  We call $d$ the \emph{non-zero value} of the good block; it is clearly uniquely determined by the block.  

\end{definition}

Clearly, if a location $p$ in $\nabla^{I_*}$ attains a value $d \in \{d_*,\dots,2^{M-m+1}\}$, then it lies in a unique good block (of non-zero value $d$), formed by extending $p$ maximally to the left and right until one either reaches the boundary of $\nabla^{I_*}$, or attains a value other than $0$ or $d$.  We record some further basic properties of $d$-good blocks:

\begin{lemma}[Basic properties of good blocks]\label{lem-good}\ 
\begin{itemize}
    \item[(i)] (Coarse disjointness) Any two distinct good blocks $I,I'$, if they intersect at all, intersect in a block of length less than $L$.
    \item[(ii)] (Parentage) If the parent $I^-$ of a good block $I$ of non-zero value $d$ is not a good block of non-zero value $d$ but still lies in $\nabla^{I_*}$, then $I^-$ attains a value $d'$ with $d < d' < 2d_*$.  Furthermore, every subblock of $I^-$ of length at least $L+1$ attains $d'$.
    \item[(iii)] (Inheritance) Every descendant $J$ of a good block $I$ of non-zero value $d$, is either a good block of non-zero value $d$, or has length less than $L$ (or both).
    \item[(iv)] (Coarse covering) Every element of $\nabla^{I_*}$ lies within $2^M (L+\frac{1}{2})$ in horizontal distance of a good block at the same depth.
  \end{itemize}
\end{lemma}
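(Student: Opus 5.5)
Each of the four items reduces to an earlier lemma, so the plan is to take them in turn.

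\emph{(i) Coarse disjointness.} Suppose $I,I'$ are distinct good blocks that meet, with non-zero values $d,d'$. Their intersection is a block lying in the same row.

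If $d\neq d'$, the intersection is $\{0,d\}\cap\{0,d'\}=\{0\}$-valued. Axiom (ii) then forces it to have length less than $L$.

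If $d=d'$, then $I\cup I'$ is a block that is $\{0,d\}$-valued and attains $d$. Maximality of each block forces $I=I'$, which is a contradiction.

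\emph{(ii) Parentage.} Let $I$ be good of value $d$ and suppose $I^-\subset\nabla^{I_*}$. We apply \Cref{pred-lemma} to $I$.
\begin{itemize}
\item Case (iii) of that lemma would give a value at least $2d\ge 2d_*>2^{M-m+1}$ inside $\nabla^{I_*}$, which is impossible.
\item In case (ii), $I^-$ takes a value $a+d>d$, and $a+d<2d\le 2^{M-m+2}$. I need the sharper bound $d'<2d_*$. The candidate value $d'$ lies in $\{d+1,\dots,2^{M-m+1}\}$, and $2^{M-m+1}<2d_*$ because $d_*>2^{M-m}$. So $d'<2d_*$ holds automatically.
\item In case (i), $I^-$ is $\{0,d\}$-valued and attains $d$. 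It is not good by assumption, so it extends to a strictly larger good block. I must rule this out or reinterpret it; the natural reading of ``not a good block'' then just means $I^-$ is not maximal. I would argue this cannot happen: a neighbour of $I^-$ that is $\{0,d\}$-valued would make a child location adjacent to $I$ that is $d\Z$-valued. Hence that location is $\{0,d\}$-valued, contradicting the maximality of $I$. The boundary case, where the neighbour of $I$ lies outside $\nabla^{I_*}$, needs a check using the triangle shape.
\end{itemize}

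For the ``furthermore'' clause, take a subblock of $I^-$ of length at least $L+1$. Its child is a subblock of $I$ of length at least $L$, so by (ii') it attains $d$. In case (ii), the subblock is $\{a,a+d\}$-valued, and its child attaining $d$ forces both values to appear. In particular $a+d=d'$ appears.

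\emph{(iii) Inheritance.} Let $J$ be a descendant of $I$. By \Cref{succ-lemma}(iii), $J$ is $\{0,d\}$-valued. If $\ell(J)\ge L$, then (ii') shows $J$ attains $d$. For maximality, suppose a neighbour $q$ of $J$ inside $\nabla^{I_*}$ were $\{0,d\}$-valued. The neighbour of the parent of $J$ above $q$ must then be $d\Z$-valued, and hence $\{0,d\}$-valued, because all values are at most $2^{M-m+1}<2d$. Induct upward on depth until reaching $I$, which contradicts the maximality of $I$. The point to verify is that those upper neighbours stay inside $\nabla^{I_*}$, which holds since descendants of $I$ sit inside the triangle.

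\emph{(iv) Coarse covering.} Take $p'\in\nabla^{I_*}$. The bottom vertex $p_*$ attains $d_*$, and $\nabla^{I_*}\subset\nabla_{p_*}$. \Cref{syndetic-lemma} gives $p''$ at the same depth, within $2^M(L+\frac12)$ of $p'$, with value in $[d_*,2^{M-m+1}]$. That lemma produces $p''\in\nabla_{p_*}=\nabla^{I_*^{-\infty}}$, which might lie outside $\nabla^{I_*}$ at shallow depth. However, the depth of $p'$ is at least $R_{m-1}$, so the rows of the two cones agree there, and $p''\in\nabla^{I_*}$. The unique good block containing $p''$ is then the required block.

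I expect the main obstacle to be the maximality (``good'') bookkeeping in (ii) and (iii), specifically handling boundary effects at the edges of $\nabla^{I_*}$.
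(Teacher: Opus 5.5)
Your proposal is correct and follows essentially the paper's argument for all four parts; splitting (i) on whether $d=d'$, and pushing the $\{0,d\}$-valued neighbour directly upward in (iii) instead of reducing to the child $I^+$ and citing \Cref{pred-lemma}, are only cosmetic variants. The boundary checks you flag go through because $\nabla^{I_*}$ is exactly the set of $(i,j)$ with $i\ge i(I_*)$, $j\ge j_l(I_*)$ and $i+j\le j_r(I_*)$. It therefore contains both parents of any of its locations below the top row, as well as the child of any two adjacent locations in it.
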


\begin{remark} Properties (i), (iv) of this lemma can be interpreted as an assertion that the good blocks form a ``coarse partition'' of $\nabla^{I_*}$.  Property (iii) implies that the good blocks (essentially) organize into (essentially) disjoint triangles, where the non-zero value of the good blocks remain constant within the triangle; and property (ii) ensures that the top of each such triangle, if not a subblock of $I_*$, launches several additional such triangles of higher non-zero value (cf., \Cref{attain}).  Our original arguments relied heavily on this triangular structure (in the spirit of \cite{collatz}); but through successive optimizations of the argument, the role of these triangles has diminished significantly, and we now argue via a more direct analysis of the good blocks.
\end{remark}

\begin{proof}  We first prove (i).  Suppose for contradiction that $I \cap I'$ is a block of length at least $L$. The block $I \cap I'$ is both $\{0,d\}$-valued and $\{0,d'\}$-valued.  By axiom (ii'), it must attain both $d$ and $d'$.  Thus $d=d'$, and $I \cap I'$ attains the value $d$.  Since a $d$-valued location is contained in a unique good block of non-zero value $d$, we obtain $I=I'$, a contradiction.

For (ii), observe that $I^-$ cannot attain any value greater than or equal to $2d \geq 2d_* > 2^{M-m+1}$.  Applying \Cref{pred-lemma}, we conclude that either $I^-$ is $\{0,d\}$-valued and attains $d$, or that $I^-$ is $\{a,a+d\}$-valued and attains $a+d$ for some $0 < a < d$.  In the former case, $I^-$ must be a good block, because if it could be extended horizontally in either direction then by \Cref{succ-lemma}(iii) the good block $I$ could also be extended horizontally, a contradiction.  So suppose we are in the latter case. Setting $d' \coloneqq a+d$, we must have $d' \leq 2^{M-m+1} < 2d_*$ since $\nabla^{I_*}$ is $\{0,\dots,2^{M-m+1}\}$-valued.  If $I^-$ contains a block $\tilde I$ of length at least $L+1$ that does not attain $d'$, then it only attains the value $a$, hence its child $\tilde I^+$ is a $\{0\}$-valued block of length at least $L$, contradicting axiom (ii).  This completes the proof of (ii).

For (iii), we may assume by induction that $J$ is the immediate descendant $I^+$ of $I$.  Without loss of generality, we may assume that $I^+$ has length at least $L$, which by axiom (ii) implies that $I^+$ cannot only attain the value $0$. By \Cref{succ-lemma}(iii), $I^+$ is $\{0,d\}$-valued, and hence attains $d$. If $I^+$ is a good block, we are done.  Otherwise we can extend $I^+$ horizontally to either the left or right while remaining $\{0,d\}$-valued.  By \Cref{pred-lemma}, this means that $I$ (which was already $\{0,d\}$-valued) could also be similarly extended horizontally while remaining $\{0,d\}$-valued (recaling that $2d \geq d_* > 2^{M-m+1}$ cannot be attained within $\nabla^{I_*}$), a contradiction, giving the claim.

Finally, the claim (iv) follows from \Cref{syndetic-lemma} and the fact that every $\{d_*, \dots,2d_*-1\}$-valued location lies in a good block.
\end{proof}

If $I$ is a good block of non-zero value $d_I$, then the triangle $\nabla^I$ is $\{0,d_I\}$-valued thanks to \Cref{succ-lemma}(iii).  It turns out that one can also gain some control on the values of the pattern to the left and right of the triangle as well, and in particular to show some strict increase in values (in a coarse sense) as one moves upwards in either the left neighborhood or right neighborhood of the triangle.  The key claim is the following variant of \Cref{syndetic-lemma}.

\begin{lemma}[Strict coarse upwards monotonicity]\label{upleft}  Let $I$ be a good block at some depth $i(I)$, length $\ell(I)$, left position $j_l(I)$, and right position $j_r(I)$, and let $p$ be a location in $\nabla^{I_*}$ that attains some value $d \geq d_*$.
\begin{itemize}
    \item (Monotonicity to the left of $\nabla^I$)  Suppose that the left gap $g \coloneqq j_l(I) - j(p)$ is positive, and that $i(I) + g + L \leq i(p) \leq i(I) + \ell(I) - L$.  Then there exists a location $p'$ with $j_l(p) \leq j_l(p') \leq j_l + L$ and $i(p) - g - L \leq i(p') < i(p)$, which attains a value $d' > d$.
    \item (Monotonicity to the right of $\nabla^I$)  Suppose that the right gap $g \coloneqq j_r(p) - j_r(I)$ is positive, and that $i(I) + g + L \leq i(p) \leq i(I) + \ell(I) - L$.  Then there exists a location $p'$ with $j_r(p) - L \leq j_r(p') \leq j_r(p)$ and $i(p) - g - L \leq i(p) < i(p)$, which attains a value $d' > d$.
\end{itemize}
\end{lemma}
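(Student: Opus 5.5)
By left–right symmetry (reflecting the array swaps left and right positions and preserves the recurrence \eqref{ai-recur}), it suffices to prove the first bullet. I read $j(p)$ as $j_r(p)$ and $j_l$ in the conclusion as $j_l(p)$. Then the left gap $g = j_l(I) - j_r(p) > 0$ says $p$ lies strictly to the left of the southeast ray bounding $\nabla^I$ on the left.

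Write $p = (i,j)$, so $j_r(p) = i+j$. Along the horizontal segment at depth $i$ from $p$ rightwards, the first locations to enter the triangle $\nabla^I$ have left position $j_l(I)$. Because $i(p) \le i(I) + \ell(I) - L$, the descendant $J$ of $I$ at depth $i(p)$ has length at least $L$. By \Cref{lem-good}(iii), $J$ is then a good block of non-zero value $d_I$. It is $\{0,d_I\}$-valued and, by axiom (ii'), attains $d_I$. So the locations $(i,j_l(I)), (i,j_l(I)+1), \dots$ form a long $\{0,d_I\}$-valued stretch starting a distance about $g$ to the right of $p$.

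The main idea is to trace backwards from $p$ using the recurrence. Since $a_p = |a_{(i-1,j)} - a_{(i-1,j+1)}| = d$, one of the two parents of $p$ has value $\geq d$, with equality only if the other parent is $0$. I would follow the chain $p_0 = p$, $p_1 = (i-1,j+1)$, $p_2 = (i-2,j+2)$, and so on, moving northeast, so the left position increases by one per step while the right position stays at $j_r(p)$. After about $g$ steps this chain reaches the southwest ray from $(i,j_l(I))$, which reaches the upper side of $\nabla^I$ at depth $i(I)$. The hypothesis $i(p) \ge i(I) + g + L$ ensures the chain stays below depth $i(I)$ for $g+L$ steps and sits inside the region where the neighbouring values are forced to $0$ or $d_I$ from the triangle.

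If every step were weakly non-increasing in the sense that the larger parent never exceeds $d$, then each step $a_{(k,\cdot)} = |x-y|$ with $\max(x,y) \le d$ and value $d$ would force $\{x,y\} = \{0,d\}$. This would propagate a $\{0,d\}$-valued diagonal strip. Once the strip reaches the $\{0,d_I\}$-valued triangle, it would either force $d = d_I$ and allow the good block $J$ (or its ancestor at that depth) to be extended leftwards, contradicting maximality, or force a value outside $\{0,d_I\}$ inside the triangle. So within $g + L$ steps upward, and within horizontal window $L$ of $j_l(p)$, some location must attain $d' > d$. The extra slack of $L$ comes from axiom (ii), which stops the chain running along a long zero block without progress.

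The hardest part is organising this propagation properly. A single chain does not control the whole two-dimensional region. I would instead run an induction like the one in \Cref{syndetic-inductive}, on blocks $\{0,d\}$-valued going up from $p$, using \Cref{pred-lemma}. Each parent block is either $\{0,d\}$-valued (so the block grows by one), or of the form $\{a,a+d\}$-valued with $a+d > d$ (done), or attains a value $\ge 2d$ (done). A block that keeps growing northeast must, after $g$ steps, overlap the $\{0,d_I\}$-valued ancestors of $J$ in at least $L$ positions. By \Cref{lem-good}(i),(ii') this forces $d = d_I$ and contradicts the maximality of the good block. Doing the careful bookkeeping of positions to land in the window $j_l(p) \le j_l(p') \le j_l(p)+L$ is where I expect the main difficulty.
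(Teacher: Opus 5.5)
There is a misreading of the hypothesis, and the step you flagged as the main difficulty is left unresolved.

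\textbf{The reading of $j(p)$.} In the statement, $j(p)$ is the second coordinate of $p=(i,j)$, i.e.\ $j_l(p)$, not $j_r(p)$. Two things force this. The mirrored second bullet uses the gap $j_r(p)-j_r(I)$. And Lemma~\ref{lem:size-dichotomy} applies the lemma with $g_0=j_l(I)-j_l(p_0)$. Under your reading, $j_r(p)<j_l(I)$ makes the backward light cone $\nabla_p$ disjoint from $\nabla^I$: every location of $\nabla_p$ has right position at most $j_r(p)$, and every location of $\nabla^I$ has left position at least $j_l(I)$. So iterated parents of $p$ can never reach $\nabla^I$. Reaching left position $j_l(I)$ would take $g+i(p)$ steps, but only $i(p)$ rows lie above $p$. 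Your geometric statements (``$p$ lies strictly to the left of the southeast ray bounding $\nabla^I$'', ``a distance about $g$ to the right of $p$'', ``after about $g$ steps'') are true only for $g=j_l(I)-j_l(p)$. So adopt that reading.

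\textbf{The block induction with the correct reading.} Let $B_t:=(i(p)-t,j_l(p))+[t+1]$ be the $t$-th ancestor of the singleton $p$. Suppose $B_0,\dots,B_{g+L}$ were all $\{0,d\}$-valued. The row of $\nabla^I$ at depth $i(p)-g-L$ is a good block of value $d_I$ by Lemma~\ref{lem-good}(iii). It has length at least $L+1$ because $i(p)\le i(I)+\ell(I)-L$. Then $B_{g+L}$ meets this row in the $L+1$ locations with left positions $j_l(I),\dots,j_l(I)+L$. By (ii') this gives $d=d_I$, and $B_{g+L}$ extends that good block to the left, a contradiction. Note that an overlap of length $L$ needs about $g+L$ steps, not $g$.

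\textbf{The gap: landing in the window.} Take the first $t\le g+L$ with $B_t$ not $\{0,d\}$-valued. Lemma~\ref{pred-lemma} only gives a large value somewhere in $B_t$. That places $j_l(p')$ in $[j_l(p),j_l(p)+t]$ with $t$ possibly $g+L$, not in the claimed window $[j_l(p),j_l(p)+L]$. Your remark about axiom (ii) ``stopping the chain'' points the right way but does not supply the argument. Two facts close it.
\begin{itemize}
\item[(a)] Case (iii) of Lemma~\ref{pred-lemma} cannot occur inside $\nabla^{I_*}$, since $2d\ge 2d_*>2^{M-m+1}$. Hence $B_t$ is $\{a,a+d\}$-valued with $0<a<d$.
\item[(b)] Every length-$(L+1)$ subblock of $B_t$ then attains $d'=a+d$. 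Otherwise that subblock is $a$-valued, and its child is a $0$-valued block of length $L$, contradicting axiom (ii). This is the second clause of Lemma~\ref{lem-good}(ii).
\end{itemize}
Now take the subblock $(i(p)-t,j_l(p))+[\min(t,L)+1]$. If $t<L$ this is all of $B_t$, which attains $a+d$ by Lemma~\ref{pred-lemma}(ii). Either way $p'$ lands in the required window, at a depth in $[i(p)-g-L,\,i(p)-1]$.

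\textbf{Comparison with the paper.} With these repairs your argument is essentially the paper's. The paper runs the same ancestor-and-overlap argument on the good block $J$ containing $p$, and applies Lemma~\ref{lem-good}(i),(ii) at the first non-good ancestor. Your version uses the light cone of $p$ instead. Because $B_t$ starts exactly at $j_l(p)$, the window bookkeeping becomes immediate.
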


\begin{proof}  We just prove the first claim, as the proof of the second is a reflection of the proof of the first.  By extending horizontally, we may place the location $p$ in a good block $J$ with non-zero value $d$.  Now we consider the successive ancestors $J^-, J^{-2}, \dots$ of $J$, continuing until the $(g+L)^\mathrm{th}$ ancestor $J^{-(g+L)}$.  This block has depth $i(p)-g-L$, and meets the triangle $\nabla^I$ in the length $L$ block $(i(p)-g-L, j_l(I)) + [L]$.  By \Cref{lem-good}(iii), the row of $\nabla^I$ at this depth $i(p)-g-L$ is good; and this row is distinct from $J^{-(g+L)}$, since otherwise $p$ would lie in $\nabla^I$, contradicting the positivity of the gap $g$.  Thus by \Cref{lem-good}(i), this ancestor $J^{-(g+L)}$ is not good.

Since $J$ is good and $J^{-(g+L)}$ is not good, we can apply \Cref{lem-good}(ii) repeatedly to locate a good block $J^{-(g+L)} < \tilde J \leq J$ of non-zero value $d$ whose ancestor $\tilde J^-$ is not good, but instead attains a value $d'$ with $d' > d \geq d_*$, as does every subblock of $\tilde J^-$ of length at least $L+1$.  

Let $K$ denote the subblock of $\tilde J^-$ consisting of the first $L+1$ elements of $\tilde J^-$ (or all of $\tilde J^-$, this block has length less than $L+1$).  Then $K$ attains $d'$ at some location $p'$.  By construction, we have $i(p) - g - L \leq i(p') < i(p)$ and $j_l(p) \leq j_l(p') \leq j_l(p) + L$, giving the claim.
\end{proof}

This gives the following key dichotomy for the sizes of good blocks.

\begin{lemma}[Blocks are small or huge]\label{lem:size-dichotomy}
    Suppose $I$ is a large good block at depth $i(I)$, length $\ell(I)$, left position $j_l(I)$, and right position $j_r(I)$. Then one of the following holds:
\begin{itemize}
    \item (Small block) One has $\ell(I)\le 100L \cdot 4^{M}$.
    \item (Huge block) One has $j_l(I) \leq j_l(I_*) + 5 L \cdot 2^M$ and $j_r(I) \geq j_r(I_*) - 5 L \cdot 2^M$.  Thus, $I$ occupies all but at most $10 L \cdot 2^M$ elements of the depth $i(I)$ row of $\nabla^{I_*}$.
    \end{itemize}
\end{lemma}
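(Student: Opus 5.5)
We argue by contradiction: assume $\ell(I) > 100L\cdot 4^M$ and that, say, $j_l(I) > j_l(I_*) + 5L\cdot 2^M$ (the right side is handled by reflecting the argument). The idea is to use \Cref{upleft} repeatedly on the left of the triangle $\nabla^I$. Each application produces a strictly larger value somewhat higher up and to the left. Since values in $\nabla^{I_*}$ are confined to $\{d_*,\dots,2^{M-m+1}\}$, there are at most $2^{M-m+1}-d_*+1 \leq 2^M$ distinct values available, so the chain of strict increases must eventually leave the triangle or violate the depth window. The length $\ell(I)$ gives the vertical room, and the horizontal gap $j_l(I)-j_l(I_*)$ gives the horizontal room.

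First we pick a starting point. Take a depth $i_0$ somewhere in the middle of $\nabla^I$, say $i_0 \approx i(I) + \ell(I)/2$. At this depth, consider the location $q$ lying about $2L\cdot 2^M$ to the left of the left edge of $\nabla^I$. This $q$ lies in $\nabla^{I_*}$, since the left gap exceeds $5L\cdot 2^M$. By \Cref{lem-good}(iv), there is a good block within horizontal distance $2^M(L+\frac12)$ of $q$, so we obtain a location $p_0$ at depth $i_0$ with value at least $d_*$. Its left gap satisfies $g_0 = j_l(I)-j_l(p_0) \in [L\cdot 2^{M-1},\, 4L\cdot 2^M]$ roughly, and in particular it is positive. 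We also need $p_0 \notin \nabla^I$; the gap guarantees this.

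Next we iterate. Apply the left case of \Cref{upleft} to $p_k$, obtaining $p_{k+1}$ with a strictly larger value. The new location sits at depth between $i(p_k)-g_k-L$ and $i(p_k)-1$, with $j_l(p_{k+1}) \in [j_l(p_k), j_l(p_k)+L]$. Consequently the new gap satisfies $g_{k+1} \le g_k$, and it remains positive as long as $j_l(p_{k+1}) < j_l(I)$. Each step decreases the depth by at most $g_k + L \le 5L\cdot 2^M$ and moves right by at most $L$. After $k \le 2^M$ steps, the total depth lost is at most $5L\cdot 4^M$, and the total rightward drift is at most $L\cdot 2^M$. This drift is smaller than the initial gap, so the gap stays positive; we should choose the starting offset so that $g_0 > L\cdot 2^M$, e.g. offset $3L\cdot 2^M$, which still fits inside $5L\cdot 2^M$ after accounting for the covering error. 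The depth condition $i(I)+g+L \le i(p_k) \le i(I)+\ell(I)-L$ continues to hold because $\ell(I)/2 > 50 L\cdot 4^M$ dominates both the total loss $5L\cdot 4^M$ and $g+L$. Each $p_k$ also remains in $\nabla^{I_*}$: it is to the right of $p_0$, above $i_0$, and below $i(I)\ge$ depth of $I_*$, and its right position is bounded by that of $I$. This gives $2^M+1$ strictly increasing values in $\{d_*,\dots,2^{M-m+1}\}$, which is impossible.

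The main obstacle is the bookkeeping that keeps every $p_k$ inside $\nabla^{I_*}$, keeps it outside $\nabla^I$, and keeps it within the depth window of \Cref{upleft}, all with the stated constants $100$ and $5$. I would first fix the starting offset and depth explicitly and then verify the three invariants by induction on $k$. If the constants turn out tight, I would start at depth $i(I)+\ell(I)-L$ instead of the middle, which gives the maximal amount of upward room.
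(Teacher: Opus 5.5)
Your proposal is correct and is essentially the paper's proof: assume $\ell(I) > 100L\cdot 4^M$ and a left gap exceeding $5L\cdot 2^M$, use coarse monotonicity to find a location of value at least $d_*$ a few multiples of $L\cdot 2^M$ to the left of $\nabla^I$, then iterate the left case of \Cref{upleft} to produce more strictly increasing values than $\nabla^{I_*}$ can hold. The differences are cosmetic: the paper starts at depth $i(I)+\ell(I)-L$ rather than mid-depth, and it invokes \Cref{syndetic-lemma} at $p_*$ directly. The latter is slightly cleaner than your use of \Cref{lem-good}(iv), since (iv) as stated only gives a nearby good block rather than a nearby location of value at least $d_*$, which costs you an extra $L$ of slack.
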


\begin{proof} We may assume that the block is large in the sense that 
\begin{equation}\label{I-large}
\ell(I) > 100 L \cdot 4^M.  
\end{equation}
We now show the left inequality $j_l(I) \leq j_l(I_*) + 5 L \cdot 2^M$.

Suppose for contradiction that $j_l(I) < j_l(I_*) + 5 L \cdot 2^M$, thus there is a gap to the left of $\nabla^I$ in $\nabla^{I_*}$ of spacing greater than $5 L \cdot 2^M$.  Applying \Cref{syndetic-lemma} (and the hypothesis that $p_*$ attains $d_*$), we may find a location $p_0 \in \nabla^{I_*}$ within the block
$$ (i(I) + \ell(I) - L, j_l(I) - 4 L \cdot 2^M) + [2^M (2L+1)]$$
that attains some value $d_0 \geq d_*$.  By construction, the left gap $g_0 = j_l(I) - j_l(p_0)$ is greater than $2^{M+1} L$, but at most $4L \cdot 2^M$.

Applying the first part of \Cref{upleft} (as well as \eqref{I-large}), we may find a further location $p_1 \in \nabla^{I_*}$ with
$j_l(p_0) \leq j_l(p_1) \leq j_l(p_0) + L$ and $i(p_0) - 5 L \cdot 2^M \leq i(p_1) < i(p_0)$, which attains a value $d_1 > d_0$.  The gap $g_1 = j_l(I) - j_l(p_1)$ thus lies between $g_0$ and $g_0-L$ and remains positive.  Iterating this procedure $2^{M-m}$ times, we may find a sequence $p_0, p_1, p_2, \dots, p_{2^{M-m+1}}$ of locations in $\nabla^{I_*}$ that attain an increasing sequence $d_0 < d_1 < \dots < d_{2^{M-m}}$ of values.  But this contradicts the hypothesis that $\nabla^{I_*}$ is $\{0,\dots,2^{M-m+1}\}$-valued (since $d_0 \geq d_* > 2^{M-m}$).

The proof of the right inequality $j_r(I) \geq j_r(p_*) - 5 L \cdot 2^M$ proceeds similarly, using the second part of \Cref{upleft} rather than the first.
\end{proof}

\subsection{Conclusion of the argument}

We can now obtain the desired conclusion ($\neg$ iii).  By \Cref{syndetic-lemma}, we can find some point $p\in \nabla^{I_*}$ with depth $2R_{m-1}$, which attains a value $d\ge d_*$.  Let $I$ be the ancestor of $p$ at depth $R_{m-1}$, then $\nabla^I$ is a length $R_{m-1}$ triangle with bottom vertex $p$.  By Lemma \ref{attain}, there is a tower $T$ of complexity $2^{M-m+1}$ for $\nabla^I$ which is attained, with initial value $d_0 = d$.  This tower can contain at most $2^{M-m+1}$ triangles, whose total lengths add up to $R_{m-1}$. Since
$$ 2^M\cdot (100 L \cdot 4^M)\le R_0\le R_{m-1},$$
we conclude from the pigeonhole principle that one of these triangles, say $\nabla^{I'}$, has length at least $100 L \cdot 4^M$.  By construction, the top edge $I'$ of this triangle is $\{0,d'\}$-valued for some $d' \geq d \geq d_*$, and attains this value $d'$.  This edge must therefore be contained in a good block, which by
\Cref{lem:size-dichotomy} has length at least the length of the depth $i(I')$ row of $\nabla^{I_*}$, minus $10 L \cdot 2^M$.  Since this row is at most $R_{m-1}$ rows deeper than the top row $I_*$ of $\nabla^{I_*}$, this good block has length at least
$$ (R_m - R_{m-1}) - R_{m-1} - 10 L \cdot 2^M \geq R_m - 3 R_{m-1}.$$
This gives the required conclusion.

\end{document}